\tikzset{->-/.default=0.5, ->-/.style={decoration={
  markings,
  mark=at position #1 with {\arrow{stealth}}},postaction={decorate}}}
\renewcommand{\eprint}[1]{#1}
\numberwithin{equation}{section}
\theoremstyle{plain}
\newtheorem{theorem}{Theorem}[section]
\newtheorem{proposition}[theorem]{Proposition}
\newtheorem{lemma}[theorem]{Lemma}
\newtheorem{corollary}[theorem]{Corollary}
\theoremstyle{definition}
\newtheorem{definition}[theorem]{Definition}
\theoremstyle{remark}
\newtheorem{remark}[theorem]{Remark}
\newtheorem{example}[theorem]{Example}
\newcommand\bp{\begin{proof}}
\newcommand\ep{\end{proof}}
\newcommand\ee{\nopagebreak\mbox{\ }\hfill$\diamond$}
\newcommand{\un}{\mathds{1}}
\newcommand\C{\mathbb{C}}
\newcommand\T{\mathbb{T}}
\newcommand\R{\mathbb{R}}
\newcommand\Q{\mathbb{Q}}
\newcommand\Z{\mathbb{Z}}
\newcommand{\A}{\mathcal{A}}
\newcommand{\G}{\mathcal{G}}
\newcommand{\F}{\mathcal{F}}
\newcommand{\K}{\mathcal{K}}
\newcommand{\LL}{\mathcal{L}}
\newcommand{\HH}{\mathcal{H}}
\newcommand{\OO}{\mathcal{O}}
\newcommand{\TT}{\mathcal{T}}
\newcommand{\WW}{\mathcal{W}}
\newcommand{\cG}{\mathcal G}
\newcommand{\Gu}{{\mathcal{G}^{(0)}}}
\newcommand\Ad{\operatorname{Ad}}
\newcommand\CAR{\operatorname{CAR}}
\newcommand\id{\operatorname{id}}
\newcommand\Ind{\operatorname{Ind}}
\newcommand\Mat{\operatorname{Mat}}
\newcommand\supp{\operatorname{supp}}
\newcommand\Tr{\operatorname{Tr}}
\newcommand{\KK}{\mathrm{KK}}
\newcommand\eps{\varepsilon}
\newcommand{\stub}{\makebox[1.8ex]{$\cdot$}}
\begin{document}

\date{February 14, 2024; revised December 18, 2024}

\title{Crystallization of C$^*$-algebras}

\author[Laca]{Marcelo Laca}
\address{Department of Mathematics and Statistics, University of
Victoria, P.O. Box 1700 STN CSC, Victoria, British Columbia, V8W 2Y2, Canada.}
\email{laca@uvic.ca}

\author[Neshveyev]{Sergey Neshveyev}
\address{Department of Mathematics, University of Oslo,
P.O. Box 1053 Blindern, NO-0316 Oslo, Norway}
\email{sergeyn@math.uio.no}
\thanks{Supported by the NFR project 300837 ``Quantum Symmetry''}

\author[Yamashita]{Makoto Yamashita}
\address{Department of Mathematics, University of Oslo,
P.O. Box 1053 Blindern, NO-0316 Oslo, Norway}
\email{makotoy@math.uio.no}

\begin{abstract}
Given a C$^*$-algebra $A$ with an almost periodic time evolution $\sigma$, we define a new C$^*$-algebra $A_c$, which we call the crystal of~$(A,\sigma)$, that
represents the zero temperature limit of~$(A, \sigma)$. We prove that there is a one-to-one correspondence between the ground states of $(A,\sigma)$ and the states on $A_c$, justifying the name. In order to  investigate further the relation between low temperature equilibrium states on $A$ and traces on $A_c$,
we define a Fock module $\F$ over the crystal  and construct a vacuum representation of $A$ on $\F$. This allows us to show, under relatively mild assumptions,  that  for sufficiently large inverse temperatures $\beta$ the $\sigma$-KMS$_\beta$-states on $A$ are induced from traces on $A_c$ by means of the Fock module. In the second part, we compare the K-theoretic structures of $A$ and $A_c$. Previous work by various authors suggests that they have (rationally) isomorphic K-groups. We analyze this phenomenon in detail, confirming it under favorable conditions, but showing that, in general, there is apparently no easy way to relate these groups. As examples, we discuss in particular Exel's results on semi-saturated circle actions, and recent results of Miller on the K-theory of inverse semigroup C$^*$-algebras. In relation to the latter, we introduce the notion of a scale~$N$ on an inverse semigroup~$I$ and define a new inverse semigroup~$I_c$, which we call the crystal of~$(I,N)$.
\end{abstract}

\maketitle

\section{Introduction}

It has been observed in a number of cases that for large inverse temperatures $\beta$ the description of KMS$_\beta$-states on a C$^*$-algebra~$A$ with a time evolution $\sigma$ is related to the description of the K-theory of~$A$. One prominent example of this is given by the Pimsner--Toeplitz algebras $\TT_X$ equipped with a (generalized) gauge action. By a well-known result of Pimsner~\cite{MR1426840}, the C$^*$-algebra $\TT_X$ is KK-equivalent to the coefficient algebra of the C$^*$-correspondence $X$. On the other hand, as was shown by Laca and Neshveyev~\cite{MR2056837}, the KMS$_\beta$-states on $\TT_X$ are described in terms of traces on the coefficient algebra. Another class of examples is provided by the semigroup (or Toeplitz) C$^*$-algebras of $ax+b$ semigroups. The study of their KMS-states with respect to a natural time evolution was initiated by Laca and Raeburn~\cite{MR2671177}, see~\cites{MR2785897,MR3037019,N,MR4227582} for subsequent developments. A quite general formula for the K-theory of such C$^*$-algebras was then obtained by Cuntz, Echterhoff and Li~\cite{MR3323201}. Again one sees that both the KMS-states and K-theory are described in terms of traces and K-theory of another C$^*$-algebra with a more transparent structure.

In 2015, during the discussion of this phenomenon at the Abel Symposium ``Operator Algebras and Applications'', Connes asked whether it is possible to ``cool down'' a thermodynamical system~$(A,\sigma)$ to obtain a simpler  C$^*$-algebra that is more ``classical'' (perhaps commutative or at least type I), but still retains  significant topological information about $A$. A similar idea had been previously realized by Connes, Consani and Marcolli in their work on endomotives~\cite{MR2349719}.

The idea of cooling down, or crystallization, as we prefer to call it, has  been used in mathematics before. Probably the most famous and successful example is the theory of crystal bases of Kashiwara~\cite{MR1090425} and Lusztig~\cite{MR1035415}. In this theory, one starts with a semisimple Lie algebra~$\mathfrak g$, takes the Drinfeld--Jimbo $q$-deformation $U_q\mathfrak g$ of its universal enveloping algebra and then lets the deformation parameter $q$ tend to $0$. It turns out that in the limit the action of the standard generators of~$\mathfrak g$  on the limit $\mathfrak g$-modules takes a much simpler form  than for $\mathfrak g$, and this allows for a combinatorial analysis of 
the representation theory of $\mathfrak g$.

On the dual side of quantized function algebras, the crystallization phenomenon has  also been observed for function algebras of quantum groups and homogeneous spaces.  This first appeared in the work of Woronowicz \cite{MR890482}, who introduced the C$^*$-algebras $C(\mathit{SU}_q(2))$ of ``continuous functions on the quantum group $\mathit{SU}_q(2)$'' for a quantization parameter $-1 \le q \le 1$, and showed that for $-1 < q < 1$ these C$^*$-algebras are all isomorphic to the case $q = 0$, which admits a more combinatorial description.
This idea has been further developed by Hong and Szyma\'{n}ski in \cite{MR1942860}, where they show that the $q$-deformed algebras of functions on certain homogeneous spaces  are graph C$^*$-algebras. One of the key points in their work is that the presentation of graph C$^*$-algebras is easy to recognize  by letting $q=0$ in the relations describing the quantized function algebras.

More recently, Giselsson~\cite{Gis} applied the same idea to show that $C(SU_q(3))$ is the C$^*$-algebra of a $2$-graph. Soon afterwards, Matassa and Yuncken~\cite{MR4635346} established an explicit connection of these ideas with the work Kashiwara and Lusztig. Using the theory of crystal bases, they described a limit of the quantized function algebras $C(G_q)$ when $q\to0$ as a higher rank graph C$^*$-algebra for all simply connected compact semisimple Lie groups $G$. A limit of the algebras $C(SU_q(n))$ when $q\to0$ has also been defined by Giri and Pal~\cite{arXiv:2203.14665}.

In this work, motivated by  the question of Connes,  and guided by the above mentioned examples and results, we start with an almost periodic time evolution $\sigma$ on a  C$^*$-algebra $A$  and  define a new C$^*$-algebra $A_c$, which we call the \emph{crystal} of $(A,\sigma)$ (Definition~\ref{def:crystal}). It is an almost immediate consequence of the construction that the states on $A_c$ are in a one-to-one correspondence with the ground (or temperature zero) states on $A$ (Proposition~\ref{prop:ground}). The construction passes several other simple tests. Namely, for every Pimsner--Toeplitz algebra~$\TT_X$ with the gauge action it returns the coefficient algebra of $X$ (see Example~\ref{ex:TP}), and for groupoid C$^*$-algebras it agrees with the construction in~\cite{MR3892090} (Proposition~\ref{prop:groupoid-crystal}).

The goal of the first part of the paper (Section~\ref{sec:KMS}) is to understand when $\sigma$-KMS$_\beta$-states on~$A$ can be described in terms of traces on $A_c$. The analysis here is motivated by the theory developed in~\cite{MR2056837} and its generalization in~\cite{MR4150892}. What allows us to use techniques from those papers is that although $A$ is not ostensibly a quantized algebra to begin with, the presence of  an almost periodic dynamics on $A$  implies that a quotient of $A$ is a type of Fock quantization of the crystal. Specifically, we construct a right C$^*$-Hilbert $A_c$-module $\F$, which we call the \emph{Fock module}, and a representation $\Lambda\colon A\to\LL(\F)$, thus showing that $\Lambda(A)$ has structure reminiscent of the Pimsner--Toeplitz algebra of a product system of C$^*$-correspondences over $A_c$. In the particular case of a genuine Pimsner--Toeplitz algebra endowed with the gauge action, the crystal coincides with the coefficient algebra and this recovers the usual construction.  For a general almost periodic dynamics  we can use our Fock module to induce traces on $A_c$ to KMS$_\beta$-states on $A$. Our main result here (Theorem~\ref{thm:KMS-full}) shows that under quite general assumptions, this way we get all KMS$_\beta$-states on $A$ for sufficiently large $\beta$.

The approach to low-temperature KMS-states through the crystal is very efficient; one can even say  surprisingly so. In a number of previously known cases it gives the quickest way, requiring almost no guess work, to a complete classification of KMS$_\beta$-states and to explicit formulas for such states. We illustrate this with C$^*$-algebras of LCM semigroups $S$ equipped with the dynamics defined by a homomorphism $N\colon S\to [1,+\infty)$ (Example~\ref{ex:LCM}). It should be stressed again that our results are for large $\beta$. What happens for small $\beta$ and when a phase transition occurs, seem to be difficult to determine in our general setting.

In the second part of the paper we compare the K-theories of $A$ and $A_c$. Because of the analogy with Pimsner--Toeplitz algebras, one might hope that there is a direct connection between them, but in Section~\ref{ssec:funct} we give several examples showing that there are no natural transformations between the functors $(A,\sigma)\to K_*(A)$ and $(A,\sigma)\to K_*(A_c)$ satisfying relatively mild and seemingly reasonable assumptions. The connection between these functors requires therefore further investigation. In this paper we will be content with giving several classes of examples where $K_*(A)$ and $K_*(A_c)$ are (rationally) isomorphic.

In Section~\ref{ssec:semis} we consider semi-saturated circle actions in the sense of Exel~\cite{MR1276163} and show that under certain conditions we have $K_*(A)\otimes_\Z\Q\cong K_*(A_c)\otimes_\Z\Q$ (Theorem~\ref{thm:K-theory-circle}). Although the conditions are arguably reasonable, it is not difficult to violate them. In particular, in Example~\ref{ex:dynam} we show how to construct a C$^*$-algebra $A$ with a periodic dynamics such that $A_c=\C$, the representation $\Lambda\colon A\to\LL(\F)$ is faithful, but $K_0(A)\cong K_0(C(K)\rtimes_S\Z)$ for a given minimal homeomorphism~$S$ of the Cantor set~$K$, so in this case the crystal contains no interesting information about the K-theory of~$A$.

In Section~\ref{sec:inverse} we study the C$^*$-algebras of inverse semigroups $I$ equipped with the dynamics defined by a \emph{scale} $N\colon I^\times\to\R^\times_+$. We first show that the crystal in this case is again the C$^*$-algebra of an inverse semigroup $I_c$, which we call the crystal of $(I,N)$ (Corollary~\ref{cor:inverse-crystal}). This puts us into position to apply recent results of Miller~\cite{miller-thesis} on K-theory of C$^*$-algebras of inverse semigroups and obtain sufficient conditions for an isomorphism $K_*(A)\cong K_*(A_c)$ (Theorem~\ref{thm:k-theory-isom-inv-semigrp}).

\smallskip

{\bf Acknowledgement.} It is our pleasure to thank Chris Bruce and Xin Li for discussions at the early stage of the project.

\bigskip

\section{Crystallization}

\subsection{Crystals and ground states}\label{sec:crystal-c-star-algs}

Assume $A$ is a C$^*$-algebra and $\sigma=(\sigma_t)_{t\in\R}$ is a one-parameter group of automorphisms of $A$. For $\lambda\in\R$, define
\[
A_\lambda=\{a\in A\colon \text{$\sigma_t(a)=e^{it\lambda}a$ for all $t\in\R$}\}.
\]
We assume that the dynamics $\sigma$ is almost periodic, meaning that the $*$-subalgebra $\A$ of $A$ spanned by the spaces $A_\lambda$, $\lambda\in\R$, is dense in $A$. Denote by $I_\lambda$ the ideal $\overline{A_\lambda A_\lambda^*}$ in $A_0$, and let
\[
I=\overline{\sum_{\lambda>0}I_\lambda}.
\]

\begin{definition}\label{def:crystal}
We call the C$^*$-algebra $A_c=A_0/I$ the \emph{crystal} of $(A,\sigma)$.
\end{definition}

Observe that the construction of the crystal is functorial, meaning that given two dynamical systems $(A_1,\sigma_1)$ and $(A_2,\sigma_2)$ and an equivariant $*$-homomorphism $\pi\colon A_1\to A_2$, we get a $*$-homomorphism $\pi_*\colon A_{1,c}\to A_{2,c}$.

To justify the name, recall that a ground state, or a zero temperature equilibrium state, is a state~$\phi$ on~$A$ such that for every $a\in A$ and every $\sigma$-analytic element $b\in A$, the function $z\mapsto \phi(a\sigma_z(b))$ is bounded in the region $\operatorname{Im} z\ge0$, see, e.g., \cite{BR2}*{Section~5.3}. Such a state is automatically $\sigma$-invariant. Furthermore, it suffices to check boundedness for all $a$ and $b$ in a set of analytic elements that spans a dense subspace of $A$. For later use, recall also that a $\sigma$-invariant state $\phi$ is called a $\sigma$-KMS$_\beta$-state ($\beta\in\R$), if $\phi(ba)=\phi(a\sigma_{i\beta}(b))$ for all $a\in A$ and all $\sigma$-analytic elements $b\in A$.

Let $\Gamma\subset\R$ be the additive subgroup generated by the numbers $\lambda\in\R$ such that $A_\lambda\ne0$. We view~$\Gamma$ as a discrete group. Consider the homomorphism $\R\to\hat\Gamma$ dual to the embedding $\Gamma\to\R$, where we identify $\hat\R$ with $\R$ using the pairing $\R\times\R\to\T$, $(x,y)\mapsto e^{ixy}$. The image of $\R$ in $\hat\Gamma$ is dense and the map $\R\ni t\mapsto\sigma_t(a)\in A$ extends by continuity to $\hat\Gamma$ for all $a\in\A$, hence for all $a\in A$ by the density of $\A$ in $A$. We thus get an action of $\hat\Gamma$ on $A$, which we continue to denote by~$\sigma$, and a conditional expectation
\begin{equation}\label{eq:cond-exp}
E\colon A\to A_0,\quad E(a)=\int_{\hat\Gamma}\sigma_\chi(a)\,d\chi,
\end{equation}
such  that $E(A_\lambda)=0$ for $\lambda\ne0$.

\begin{proposition}\label{prop:ground}
 Let $\sigma$ be an almost periodic dynamics on $A$. Then there is a completely positive contraction onto the crystal obtained by taking $\hat{\Gamma}$-averages modulo $I$,
\[
\vartheta\colon A\to A_c=A_0/I,\qquad \vartheta(a)=E(a)+I.
\]
If $A_c\ne0$, then the map $\psi\mapsto\psi\circ\vartheta$ defines a bijection between the state space of~$A_c$ and the $\sigma$-ground states on $A$. If $A_c=0$, then there are no $\sigma$-ground states on $A$.
\end{proposition}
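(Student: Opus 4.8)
The plan is to reduce everything to a single spectral computation and then match the resulting condition against the ideal $I$. The structural part is immediate: since $E$ is a conditional expectation it is a completely positive contraction onto $A_0$, and the quotient map $A_0\to A_0/I$ is a $*$-homomorphism, so $\vartheta=(\,\cdot\,+I)\circ E$ is a completely positive contraction, and it is onto $A_c$ because $E(A)=A_0$. The key observation is that for $b\in A_\lambda$ one has $\sigma_z(b)=e^{iz\lambda}b$, so for any state $\phi$ and any $a\in A$,
\[
\phi(a\sigma_z(b))=e^{iz\lambda}\phi(ab),\qquad |e^{iz\lambda}|=e^{-\lambda\operatorname{Im}z}.
\]
Since boundedness in $\operatorname{Im}z\ge0$ may be tested on the spanning set of analytic elements $\bigcup_\lambda A_\lambda$, it follows that a state $\phi$ is a $\sigma$-ground state precisely when $\phi(ab)=0$ for all $a\in A$ and all $b\in A_\lambda$ with $\lambda<0$: for $\lambda\ge0$ the factor $e^{iz\lambda}$ is already bounded in the upper half-plane, while for $\lambda<0$ boundedness forces $\phi(ab)=0$.

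Next I would identify this vanishing condition with ``$\phi$ kills $I$''. A $\sigma$-ground state is $\sigma$-invariant, so $\phi(A_\lambda)=0$ for $\lambda\ne0$ and hence $\phi=\phi\circ E$; in particular $\phi$ is determined by $\phi|_{A_0}$. Fix $\lambda>0$ and $b\in A_{-\lambda}$. For $a\in A$, only the degree-$\lambda$ component $a_\lambda$ survives under $E$, giving $\phi(ab)=\phi(E(ab))=\phi(a_\lambda b)$ with $a_\lambda b\in A_\lambda A_{-\lambda}=A_\lambda A_\lambda^*\subseteq I_\lambda$. As $a_\lambda$ ranges over $A_\lambda$ and $b$ over $A_{-\lambda}=A_\lambda^*$, the ground-state condition at index $-\lambda$ becomes exactly the vanishing of $\phi|_{A_0}$ on $A_\lambda A_\lambda^*$, hence on $I_\lambda$ by continuity; running over all $\lambda>0$ and passing to the closed sum shows that $\phi$ is a $\sigma$-ground state if and only if $\phi|_{A_0}$ annihilates $I$. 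I expect this to be the step carrying the most weight, since it is where the asymmetric boundedness requirement (only negative $\lambda$ are constrained) gets matched against the symmetric ideal $I=\overline{\sum_{\lambda>0}A_\lambda A_\lambda^*}$; the identity $A_\lambda^*=A_{-\lambda}$ is precisely what makes the two sides coincide.

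The bijection then assembles formally. If $\phi$ is a $\sigma$-ground state, then $\phi|_{A_0}$ is a positive functional on $A_0$ vanishing on $I$, so it descends to a positive functional $\psi$ on $A_c$ with $\phi=\psi\circ\vartheta$. Conversely, given a state $\psi$ on $A_c$, the functional $\phi=\psi\circ\vartheta$ is positive, and it satisfies the vanishing condition of the first paragraph: for $b\in A_\lambda$ with $\lambda<0$ and $a\in A$, projecting by $E$ leaves only $a_{-\lambda}b\in A_{-\lambda}A_\lambda\subseteq I_{-\lambda}\subseteq I$, whence $\phi(ab)=\psi(\vartheta(ab))=0$; thus $\phi$ is a $\sigma$-ground state (and is automatically $\sigma$-invariant, consistently with $E\circ\sigma_\chi=E$). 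These two assignments are mutually inverse: $(\psi\circ\vartheta)|_{A_0}$ descends back to $\psi$, and surjectivity of $\vartheta$ onto $A_c$ gives both that $\psi$ is recovered and that $\psi\mapsto\psi\circ\vartheta$ is injective.

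Finally I would handle normalizations and the degenerate case. To see that $\psi\circ\vartheta$ (and the descended $\psi$) is a state, take an approximate unit $(u_i)$ of $A$: then $E(u_i)$ is a positive approximate unit of $A_0$ of norm $\le1$ and $\vartheta(u_i)$ one of $A_c$, so $(\psi\circ\vartheta)(u_i)=\psi(\vartheta(u_i))\to\|\psi\|=1$, forcing $\|\psi\circ\vartheta\|=1$ (the unital case being trivial). If instead $A_c=0$, then $A_0=I$, so any $\sigma$-ground state $\phi$ would restrict to a positive functional on $A_0$ vanishing on all of $A_0$, giving $\phi=\phi\circ E=0$ and contradicting $\|\phi\|=1$; hence there are no $\sigma$-ground states, in agreement with the zero algebra having no states.
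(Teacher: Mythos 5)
Your proof is correct and follows essentially the same route as the paper: reduce the ground-state condition on the spanning set $\bigcup_\lambda A_\lambda$ to the vanishing $\phi(ab)=0$ for $b\in A_\lambda$, $\lambda<0$, identify this (via $A_\lambda^*=A_{-\lambda}$) with $\phi$ factoring through $E$ and $\phi|_{A_0}$ annihilating $I$, and then assemble the bijection with states on $A_c$. The only cosmetic difference is that you invoke the automatic $\sigma$-invariance of ground states together with the component computation $E(ab)=a_\lambda b$, whereas the paper gets both the factoring through $A_0$ and the vanishing on $I_{-\lambda}$ in one stroke from the Cauchy--Schwarz reduction to $\phi(a^*a)=0$; your extra details on the norm-one verification and the case $A_c=0$ are points the paper leaves implicit.
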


\bp
The first assertion is clearly true. Since our dynamics is almost periodic, the subspaces $A_\lambda$ with $\lambda \in \R$ span a dense subalgebra of $\sigma$-analytic elements, so the ground states are the states $\phi$ for which the function $z\mapsto\phi(b\sigma_z(a))=e^{i\lambda z}\phi(ba)$ is bounded on the upper half-plane for all $b\in\A$, $a\in A_\lambda$ and $\lambda\in\R$, which happens if and only if $\phi(ba)=0$ for all $b\in \A$ and $a\in A_\lambda$ with $\lambda<0$. By the Cauchy--Schwarz inequality this is equivalent to $\phi(a^*a)=0$ for all $a\in A_\lambda$ with $\lambda<0$. Since $A_\lambda^*=A_{-\lambda}$, this is in turn equivalent to saying that $\phi$ factors through $A_0$ and vanishes on $\overline{A_\lambda^*A_\lambda}=I_{-\lambda}$  for all $\lambda<0$. This gives the result.
\ep

The following is one of the main motivating examples for our construction.

\begin{example}\label{ex:TP}
Let $B$ be a C$^*$-algebra and $Y$ is a C$^*$-correspondence over $B$, that is, a right C$^*$-Hilbert $B$-module together with a left action of $B$ by adjointable operators.
Consider the Pimsner--Toeplitz algebra $A=\TT_Y$~\cite{MR1426840}, so $\TT_Y$ is generated by $B$ and elements $S_\xi$, $\xi\in Y$, such that the map $Y\to\TT_Y$, $\xi\mapsto S_\xi$, is $B$-bilinear and $S^*_\xi S_\zeta=\langle\xi,\zeta\rangle_B$ for all $\xi,\zeta\in Y$.

Consider a Hamiltonian $H$ on $Y$ with strictly positive and pure point spectrum, that is, we assume that $Y=\bigoplus_{\lambda>0}Y_\lambda$ for C$^*$-correspondences $Y_\lambda$ such that $H\xi=\lambda\xi$ for all $\xi\in Y_\lambda$. Then $H$ defines a one-parameter group of automorphisms of $\TT_Y$ by
\[
\sigma^H_t(b)=b\quad\text{for}\quad b\in B,\qquad \sigma^H_t(S_\xi)=e^{i\lambda t}S_\xi\quad\text{for}\quad \xi\in Y_\lambda.
\]
When $Y_\lambda=0$ for all $\lambda\ne1$, we get the gauge action $\gamma$.

From the relations in $\TT_Y$ we easily get that $B$ together with the elements of the form
\begin{equation}\label{eq:T-generators}
S_{\xi_1}\dots S_{\xi_n}S^*_{\zeta_1}\dots S^*_{\zeta_m},
\end{equation}
where $\xi_i\in Y_{\lambda_i}$ and $\zeta_j\in Y_{\mu_j}$,  $n,m\ge0$, $n+m\ge1$, span a dense subspace of $\TT_Y$. It follows that~$B$ together with the elements~\eqref{eq:T-generators} such that
$\lambda_1+\dots+\lambda_n=\mu_1+\dots+\mu_m$ span a dense subspace of~$A_0$. Similarly, the space $A_\lambda$ for $\lambda>0$ is the closure of the linear span of elements~\eqref{eq:T-generators} such that
\[
\lambda=\lambda_1+\dots+\lambda_n-\mu_1-\dots-\mu_m.
\]
Note that for the last equality to hold we must have $n\ge1$. From this we can conclude that $I=\overline{\sum_{\lambda>0}A_\lambda A^*_\lambda}$ is the closure of the linear span of elements~\eqref{eq:T-generators} such that $\lambda_1+\dots+\lambda_n=\mu_1+\dots+\mu_m$. Therefore the map $B\to A_c=A_0/I$ is surjective.

In order to see that this map is injective, consider the Fock module
\[
\F_Y=B\oplus Y\oplus (Y\otimes_B Y)\oplus\dots
\]
and the standard representation of $\TT_Y$ on it given by
\[
S_\xi(\xi_1\otimes\dots\otimes\xi_n)=\xi\otimes\xi_1\otimes\dots\otimes\xi_n.
\]
Consider the projection $Q_0\colon \F_Y\to B$ onto the $0$th component. Then $Q_0xQ_0=0$ for all $x\in I$ and $Q_0bQ_0=b$ for all $b\in B$, so the compression by $Q_0$ defines a contractive map $A_c\to B$ that is a left inverse to $B\to A_c$.

To summarize, the crystal of $(\TT_Y,\sigma^H)$ is the coefficient algebra $B$. Proposition~\ref{prop:ground} in this case is a consequence of \cite{MR2056837}*{Theorem~2.2}, because the positive energy condition needed there is satisfied when the Hamiltonian has strictly positive pure point spectrum.

We note in passing that if instead of $\TT_Y$ we consider the Cuntz--Pimsner algebra $\OO_Y$, then the crystal of $(\OO_Y,\sigma^H)$ is zero whenever the left action of $B$ on $Y$ is by generalized compact operators. This includes the systems studied in~\cite{MR1785460}.
\end{example}

\subsection{Crystals of groupoid \texorpdfstring{C$^*$}{C*}-algebras}\label{sec:crystal-gropd}


Let $\G$ be a locally compact, not necessarily Hausdorff, étale groupoid. More precisely, we assume that $\G$ is a topological groupoid with locally compact underlying space such that the unit space $\Gu$ is a locally compact Hausdorff space in the relative topology and the source $s\colon\G\to\Gu$ and range $r\colon\G\to\Gu$ maps are local homeomorphisms. Denote by $C_c(\G)$ the space spanned by the (not necessarily continuous) functions $f$ on $\G$ such that~$f$ is zero outside an open Hausdorff subset $W\subset \G$ and the restriction of $f$ to $W$ is continuous and compactly supported. This is a $*$-algebra with convolution product
\begin{equation*} \label{eprod}
(f_{1}*f_{2})(g) = \sum_{h \in \G^{r(g)}} f_{1}(h) f_{2}(h^{-1}g)
\end{equation*}
and involution by $f^{*}(g)=\overline{f(g^{-1})}$. Denote by $C^*(\G)$ the C$^*$-enveloping algebra of this $*$-algebra.

Assume $c$ is a locally constant real-valued $1$-cocycle on $\G$, that is, a continuous homomorphism $c\colon\G\to\R_d$, where $\R_d$ denotes the additive group~$\R$ with the discrete topology. We then have an almost periodic dynamics $\sigma^c$ on $C^*(\G)$ defined by
\[
\sigma^c_t(f)(g)=e^{itc(g)}f(g)\quad\text{for}\quad f\in C_c(\G)\quad \text{and}\quad g\in\G.
\]

Recall from  \citelist{\cite{MR584266}\cite{MR3892090}} that the \emph{boundary set} of $c$ is defined by
\begin{equation}\label{eq:boundary}
Z=\{x\in\Gu\colon \text{$c\ge0$ on $\G_x$}\}=\{x\in\Gu\colon \text{$c\le0$ on $\G^x$}\}.
\end{equation}
Denote by $\G|_Z$ the reduction of $\G$ by $Z$.
If $Z\ne\emptyset$, this is a locally compact étale groupoid, and by \cite{MR3892090}*{Theorem~1.4} there is a bijection between the state space of $C^*(\G|_Z)$ and the $\sigma^c$-ground states on $C^*(\G)$, at least in the second countable Hausdorff case.
We have the following result, which in particular shows that these extra assumptions on $\G$ are not needed; \cite{MR3892090}*{Theorem~1.4} allows for slightly more general cocycles $c$ though.

\begin{proposition}\label{prop:groupoid-crystal}
Assume $\G$ is a locally compact étale groupoid and $c\colon\G\to\R$ is a locally constant $1$-cocycle with boundary set $Z$. Consider the C$^*$-algebra $A=C^*(\G)$ and the dynamics $\sigma^c$. Then the crystal $A_c$ of $(C^*(\G),\sigma^c)$ is canonically isomorphic to $C^*(\G|_Z)$. In particular, $A_c\ne0$ if and only if $Z\ne\emptyset$.
\end{proposition}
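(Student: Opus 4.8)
The plan is to exploit the $\Gamma$-grading of $\CG$ induced by $c$, identify the fixed-point algebra $A_0$ with the groupoid C$^*$-algebra of the kernel subgroupoid $\HH=c^{-1}(0)$, realize $I$ as the ideal attached to an open invariant subset of $\Gu$, and then read off $A_c$ from the associated extension of full groupoid C$^*$-algebras. First, since $c$ is locally constant and $\R_d$ is discrete, every level set $c^{-1}(\lambda)$ is clopen in $\G$; directly from $\sigma^c_t(f)(g)=e^{itc(g)}f(g)$ I would show that $A_\lambda$ is the closed linear span of the $f\in\CcG$ supported on $c^{-1}(\lambda)$. In particular $\HH=c^{-1}(0)$ is a clopen subgroupoid with unit space $\Gu$, the conditional expectation $E$ of \eqref{eq:cond-exp} is restriction to $\HH$, and $A_0=\overline{C_c(\HH)}$. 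I would then identify $A_0$ canonically with $C^*(\HH)$: extension by zero gives a $*$-homomorphism $C^*(\HH)\to\CG$ with image $A_0$, and its injectivity follows because any representation of $C^*(\HH)$ occurs inside its induction to $\G$, so the two C$^*$-norms agree on $C_c(\HH)$ (equivalently, $A_0$ is the unit fibre of the Fell bundle over $\Gamma$ defined by the grading).

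Next I would compute the ideal $I$. For $\lambda>0$ a product $f*g^*$ with $f,g$ supported on $c^{-1}(\lambda)$ is supported on $\HH|_{V_\lambda}$, where $V_\lambda=r(c^{-1}(\lambda))$ is open (as $r$ is a local homeomorphism) and $\HH$-invariant; hence $I_\lambda=\overline{A_\lambda A_\lambda^*}\subseteq C^*(\HH|_{V_\lambda})$. For the reverse inclusion I would use open bisections $B\subseteq c^{-1}(\lambda)$: a short computation gives $1_B*1_B^*=1_{r(B)}$, so $I_\lambda$ contains $C_0(V_\lambda)$ and therefore the whole ideal $C^*(\HH|_{V_\lambda})$ that $C_0(V_\lambda)$ generates, giving $I_\lambda=C^*(\HH|_{V_\lambda})$. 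Summing over $\lambda>0$ then yields $I=C^*(\HH|_U)$ with $U=\bigcup_{\lambda>0}V_\lambda=r(c^{-1}((0,\infty)))=\{x\in\Gu:\exists\,g\in\G^x,\ c(g)>0\}=\Gu\setminus Z$, by the second description of $Z$ in \eqref{eq:boundary}.

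To identify the quotient I would first observe that $\G|_Z=\HH|_Z$: for $g$ with $s(g),r(g)\in Z$, the first description of $Z$ forces $c(g)\ge0$ while the second forces $c(g)\le0$, so $c(g)=0$. Since $U$ is open and invariant with closed complement $Z$, the standard extension $0\to C^*(\HH|_U)\to C^*(\HH)\to C^*(\HH|_Z)\to0$ of full étale groupoid C$^*$-algebras identifies $A_c=A_0/I=C^*(\HH)/C^*(\HH|_U)$ with $C^*(\HH|_Z)=C^*(\G|_Z)$; the isomorphism is induced by restriction of functions to $\G|_Z$ and is therefore canonical. The last claim, $A_c\ne0\iff Z\ne\emptyset$, is then immediate, since $C^*(\G|_Z)\ne0$ exactly when $Z\ne\emptyset$.

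The routine part is the groupoid bookkeeping identifying $A_\lambda$, $I_\lambda$ and $U$. The genuine input, and where I expect to spend the most care, is the behaviour of full C$^*$-algebras of possibly non-Hausdorff étale groupoids under reduction: the injectivity of $C^*(\HH|_U)\to C^*(\HH)$, the fact that the kernel of restriction to $\HH|_Z$ is exactly $C^*(\HH|_U)$, and the identification $A_0\cong C^*(\HH)$. All three rest on induction of representations for open (respectively closed) invariant reductions, which must be run with the span-of-$C_c$-on-Hausdorff-opens description of $\CcG$ used in the paper rather than with ordinary continuous compactly supported functions.
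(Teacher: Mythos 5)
Your route is, at the level of structure, the same as the paper's: identify $A_0$ with the full $C^*(c^{-1}(0))$ by inducing representations via the conditional expectation, identify $I$ with the closure of $C_c$ of the reduction of $c^{-1}(0)$ to the open invariant set $\Gu\setminus Z$, and then invoke the fact that the kernel of the restriction map $C^*(c^{-1}(0))\to C^*(c^{-1}(0)|_Z)$ contains $C_c(c^{-1}(0)\setminus c^{-1}(0)|_Z)$ as a dense subspace (the paper cites precisely this, from Neshveyev--Schwartz, rather than a general ``standard extension''). Your computation of $I$ is a small but genuine variation: where the paper shows that the algebraic ideal spanned by $f_1*f_2^*$ equals $C_c(c^{-1}(0)\setminus c^{-1}(0)|_Z)$ exactly, by factoring every $f$ supported on a suitable bisection through a single product using a bisection $V\subset c^{-1}(\lambda)$ with $r(U)\subset r(V)$, you only extract $C_0(\Gu\setminus Z)\subset I$ from bisections inside $c^{-1}(\lambda)$ and then let the ideal property of $I$ in $A_0$ do the rest; together with the evident support inclusion this gives $I=\overline{C_c(c^{-1}(0)\setminus c^{-1}(0)|_Z)}$, which is all the quotient argument needs. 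That shortcut is fine (cosmetic point: $1_B$ for an open bisection $B$ need not lie in $C_c$; work instead with $f*f^*$ for $f\in C_c(B)$, which produces $C_c(r(B))$), and your direct verification that $\G|_Z=c^{-1}(0)|_Z$ replaces the paper's citation of Renault.

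The one step where your write-up is not yet a proof is the identification $A_0\cong C^*(c^{-1}(0))$. Asserting that ``any representation of $C^*(\HH)$ occurs inside its induction to $\G$'' presupposes that induction via the inner product $\langle f_1,f_2\rangle=E(f_1^**f_2)$ is defined, and for that one must know that $E(f^**f)$ is positive as an element of the \emph{universal} $C^*(c^{-1}(0))$; positivity inside $C^*(\G)$ is obvious but useless here, since the whole issue is whether the norm induced from $C^*(\G)$ on $C_c(c^{-1}(0))$ is the universal one. This is exactly what the first half of the paper's proof supplies, by a partition-of-unity argument over bisections writing $E(f^**f)=\sum_i h_i^**h_i$ with $h_i\in C_c(c^{-1}(0))$. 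Your parenthetical appeal to the Fell bundle over $\Gamma$ is circular for the same reason: the unit fibre of that bundle is by definition the closure of $C_c(c^{-1}(0))$ in $C^*(\G)$, and whether this closure is the full $C^*(c^{-1}(0))$ is precisely the point in question. So the outline is correct and matches the paper's strategy, but to complete it you must prove (or quote) this positivity decomposition before the induction can be run.
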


\bp
Let us show first that the closure of $C_c(c^{-1}(0))$ in $C^*(\G)$ can be identified with $C^*(c^{-1}(0))$. This can be shown along the lines of the proof of \cite{MR3892090}*{Theorem~1.4}, but since formally that proof relies on second countability, let us give an independent argument.

Note that the conditional expectation $E\colon A\to A_0$ defined by~\eqref{eq:cond-exp} has the property $E(f)=f|_{c^{-1}(0)}$ for $f\in C_c(\G)$, where we view $C_c(c^{-1}(0))$ as a subspace of $C_c(\G)$ by extending functions by zero. It follows that $A_0$ is the closure of $C_c(c^{-1}(0))$ in $C^*(\G)$. We claim that it therefore suffices to show that for any $f\in C_c(\G)$ we can write $E(f^**f)$ as a finite sum of functions $h^**h$ with $h\in C_c(c^{-1}(0))$. Indeed, we can then define a $C_c(c^{-1}(0))$-valued inner product
\[
\langle \stub,\stub \rangle\colon C_c(\G)\times C_c(\G)\to C_c(c^{-1}(0))\quad\text{by}\quad \langle f_1,f_2\rangle=E(f_1^**f_2)
\]
and then induce any representation $\rho$ of $C_c(c^{-1}(0))$ on a Hilbert space to a representation $\pi$ of $C_c(\G)$. Since $\pi|_{C_c(c^{-1}(0))}$ contains $\rho$ as a subrepresentation, it follows that the completion of $C_c(c^{-1}(0))$ with respect to the norm on $C^*(\G)$ coincides with $C^*(c^{-1}(0))$.

Since every $f\in C_c(\G)$ is zero outside the union of finitely many sets $c^{-1}(\lambda)$, it is enough to consider $f\in C_c(c^{-1}(\lambda))$ for $\lambda\ne0$. We can find open bisections $U_1,\dots, U_n\subset c^{-1}(\lambda)$ and compact subsets $K_i\subset U_i$ such that $f$ is zero outside $\bigcup^{n}_{i=1} K_i$. We can also find functions $\rho_i\in C_c(\Gu)$ such that $\supp\rho_i\subset r(U_i)$, $0\le\rho_i\le1$ and $\sum^n_{i=1}\rho_i(x)=1$ for all $x\in\bigcup^{n}_{i=1}r(K_i)$. Define functions $f_i\in C_c(U_i)$ by letting $f_i(g)=\rho_i(r(g))^{1/2}$ for $g\in U_i$. We then have
\[
f=\sum_i f_i*f_i^**f=\sum_if_i*\langle f_i,f\rangle,
\]
hence
\[
E(f^* * f) = \langle f,f\rangle=\sum_i\langle f,f_i*\langle f_i,f\rangle\rangle=\sum_i\langle f_i,f\rangle^**\langle f_i,f\rangle,
\]
which gives the required decomposition.

\smallskip

Next, recall that the boundary set $Z$ is invariant under the action of $c^{-1}(0)$ and we have $\G|_Z=c^{-1}(0)|_Z$, see~\cite{MR584266}*{Proposition I.3.16}.
Consider the kernel of the restriction map $C^*(c^{-1}(0))\to C^*(c^{-1}(0)|_Z)$. By \cite{MR4563262}*{Proposition~1.1}, it contains $C_c(c^{-1}(0)\setminus c^{-1}(0)|_Z)$ as a dense subspace. It follows that in order to finish the proof of the proposition it suffices to show that the algebraic ideal $J\subset C_c(c^{-1}(0))$ spanned by the functions $f_1*f_2^*$ for all $f_1,f_2\in C_c(c^{-1}(\lambda))$ and $\lambda>0$ coincides with $C_c(c^{-1}(0)\setminus c^{-1}(0)|_Z)$.

The inclusion $J\subset C_c(c^{-1}(0)\setminus c^{-1}(0)|_Z)$ is clear, since if $g\in c^{-1}(\lambda)$ for some $\lambda>0$, then $r(g)\notin Z$. For the opposite inclusion it suffices to show that for every $g\in c^{-1}(0)\setminus c^{-1}(0)|_Z$ there is an open Hausdorff neighbourhood $U$ of $g$ such that every function $f\in C_c(U)$ can be written as $f_1*f_2^*$ for some $f_1,f_2\in C_c(c^{-1}(\lambda))$ and $\lambda>0$. As $r(g)\notin Z$, there is $h\in \G^{r(g)}$ such that $\lambda=c(h)>0$. Choose an open bisection $V\subset c^{-1}(\lambda)$ containing~$h$. Then choose an open bisection $U\subset c^{-1}(0)$ such that $g\in U$ and $r(U)\subset r(V)$. Consider also the bisection $U^{-1} V\subset c^{-1}(\lambda)$ containing $g^{-1}h$. Now, given $f\in C_c(U)$, define $f_1\in C_c(V)$ by letting $f_1(g')=f(g'')$ for $g'\in V\cap r^{-1}(U)$, where $g''\in U$ is the unique point with $r(g'')=r(g')$, and $f(g')=0$ for $g'\in V\setminus r^{-1}(U)$. Take also any $f_2\in C_c(U^{-1} V)$ such that $f_2(g')=1$ whenever there is (a necessarily unique) $g''\in U$ such that $f(g'')\ne0$ and $s(g'')=r(g')$. Then $f=f_1*f_2^*$.
\ep

\begin{remark}\label{rem:reduced}
More generally, instead of the full groupoid C$^*$-algebra $C^*(\G)$ we can consider the completion $C^*_\nu(\G)$ of $C_c(\G)$ with respect to some $\sigma^c$-invariant C$^*$-norm $\|\stub\|_\nu$ dominating the reduced norm. By restriction we then get a norm on  $C_c(c^{-1}(0))$. Then by \cite{CN2}*{Proposition~1.2} we also get a norm on $C_c(\G|_Z)$ such that the sequence
\[
0\to C^*_\nu(c^{-1}(0)\setminus c^{-1}(0)|_Z)\to C^*_\nu(c^{-1}(0))\to C^*_\nu(\G|_Z)\to0
\]
is exact. The same arguments as in the above proof show that the crystal of $(C^*_\nu(\G),\sigma^c)$ is~$C^*_\nu(\G|_Z)$.

Note that if we start with the reduced norm on $C_c(\G)$, then the crystal $C^*_\nu(\G|_Z)$ might be different from $C^*_r(\G|_Z)$, but we do not have an example where this actually happens, cf.~\cite{CN1}.
\end{remark}

\bigskip

\section{Equilibrium states defined by crystals} \label{sec:KMS}

\subsection{Fock module and vacuum representations}
We continue to assume that $A$ is a C$^*$-algebra with an almost periodic time evolution $\sigma$. For every $\lambda\in\R$, we can view the spectral subspace $A_\lambda$ as a right C$^*$-Hilbert $A_0$-module with inner product $\langle a,b\rangle=a^*b$. Consider the closed submodule $\overline{A_\lambda I}\subset A_\lambda$, where, as before, $I=\overline{\sum_{\lambda>0}A_\lambda A^*_\lambda}$. If $(e_i)_i$ is an approximate unit in $I$, then $a\in A_\lambda$ lies in $\overline{A_\lambda I}$ if and only if $a e_i\to a$. It follows that the quotient norm on the Banach space
\[
X_\lambda=A_\lambda/\overline{A_\lambda I}
\]
can be computed as $\|a+\overline{A_\lambda I}\|=\lim_i\|a-ae_i\|$. We then conclude that $X_\lambda$ is a right C$^*$-Hilbert $A_c$-module, with the quotient norm equal to the one defined by the inner product
\[
\langle a+\overline{A_\lambda I},b+\overline{A_\lambda I}\rangle=a^*b+I.
\]
We can also write $X_\lambda=A_\lambda\otimes_{A_0}(A_0/I)$, the main point of the above presentation is to stress that~$X_\lambda$ is a quotient of $A_\lambda$.

By the definition of $I$ we have $X_0=A_c$ and $X_\lambda=0$ for all $\lambda<0$.
Denote by $\Gamma_+\subset\R_+$ the additive monoid generated by all $\lambda\ge0$ such that $X_\lambda\ne0$, and  recall that $\Gamma$ is the group generated by $\{\lambda\in\R: A_\lambda\ne0 \}$. As we will see in examples later, $\Gamma_+$ can be much smaller than $\Gamma\cap\R_+$.

\begin{definition}
The right C$^*$-Hilbert $A_c$-module
\[
\F=\bigoplus_{\lambda\in\Gamma_+}X_\lambda
\]
is called the \emph{Fock module}. 
Equivalently, we can complete $A$ to a right C$^*$-Hilbert $A_0$-module $X$ using the inner product $\langle a,b\rangle=E(a^*b)$, then $\F=X\otimes_{A_0}(A_0/I)$.
\end{definition}

Denote by $\LL(\F)$ the C$^*$-algebra of adjointable operators on $\F$. The left action of $A$ on itself defines a $*$-homomorphism $\Lambda\colon A\to\LL(\F)$. We will usually write $a\xi$ instead of $\Lambda(a)\xi$ for $a\in A$ and $\xi\in\F$.

\begin{example}\label{ex:TP2}
Let us see the meaning of this definition in the setting of Example~\ref{ex:TP}, that is, for a Pimsner--Toeplitz algebra $A=\TT_Y$ with the dynamics $\sigma=\sigma^H$ defined by an operator $H$ with strictly positive and pure point spectrum. As we showed in that example, the crystal of $(\TT_Y,\sigma^H)$ can be canonically identified with the coefficient algebra $B$.

If $\xi\in Y_\lambda$ for some $\lambda>0$, then $S_\xi^*\in A_{-\lambda}$ has image zero in $\F$. It follows that all elements of the form $S_{\xi_1}\dots S_{\xi_n}S^*_{\zeta_1}\dots S^*_{\zeta_m}$, with $m\ge1$, have zero image in $\F$. From this we can conclude that $\Gamma_+$ is the monoid generated by the pure point spectrum of $H$ and, for every $\lambda\in\Gamma_+\setminus\{0\}$, we have a unitary isomorphism
\[
\bigoplus^\infty_{n=1} \smashoperator[r]{\bigoplus_{\substack{\lambda_1,\dots,\lambda_n>0 \\ \lambda_1+\dots+\lambda_n=\lambda}}} \ Y_{\lambda_1}\otimes_B\dots\otimes_B Y_{\lambda_n}\cong X_\lambda,\quad \xi_1\otimes\dots\otimes\xi_n\mapsto S_{\xi_1}\dots S_{\xi_n}+\overline{A_\lambda I},
\]
of right C$^*$-Hilbert $B$-modules. If follows that the right C$^*$-Hilbert $B$-module $\F$ is isomorphic to the usual Fock module $\F_Y$. This isomorphism respects the left actions of $A=\TT_Y$.
\ee
\end{example}

Next we want to characterize representations induced by $\F$, cf.~\cite{MR4150892}*{Section~6}. By a representation we will always mean a nondegenerate representation.

\begin{definition}
A representation $\pi\colon A\to B(H)$ is called a \emph{vacuum representation} if $\pi(A)H_0$ is dense in $H$, where
\[
H_0=\{\Omega\in H\colon \text{$\pi(a)^*\Omega=0$ for all $a\in A_\lambda$ and $\lambda>0$}\}
\]
is the subspace of \emph{vacuum vectors}.
\end{definition}

\begin{proposition} \label{prop:vacuum-rep}
A representation $\pi\colon A\to B(H)$ is a vacuum representation if and only if it is induced from a representation $\rho$ of $A_c$ by the Fock module. Furthermore, the map $\rho\mapsto\Ind\rho$ defines an equivalence between the category of representations of $A_c$ and the category of vacuum representations of~$A$.
\end{proposition}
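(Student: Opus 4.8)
<br>

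The goal is to establish an equivalence of categories between representations of $A_c$ and vacuum representations of $A$, implemented by the Fock-module induction functor $\rho \mapsto \Ind\rho$. My plan is to verify the two directions separately and then check that the constructions are mutually inverse, functorially.

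First I would analyze the structure of the induced representation $\Ind\rho$ on $\F \otimes_{A_c} H_\rho$ to see that it is always a vacuum representation. The key observation is that the Fock module decomposes as $\F = \bigoplus_{\lambda \in \Gamma_+} X_\lambda$ with $X_0 = A_c$, so the induced Hilbert space contains a distinguished copy of $H_\rho$ sitting as $X_0 \otimes_{A_c} H_\rho$. I would show that these vectors are precisely the vacuum vectors: for $a \in A_\lambda$ with $\lambda > 0$, the left action $\Lambda(a)$ shifts $X_\mu$ into (a quotient of) $A_{\lambda+\mu}$, living in degree $\lambda + \mu$, and its adjoint $\Lambda(a)^*$ lowers degree by $\lambda$; hence $\Lambda(a)^*$ annihilates the degree-zero part $X_0 \otimes H_\rho$. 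Conversely a vector with a nonzero component in some $X_\mu$, $\mu>0$, can be detected by applying $\Lambda(a)^*$ for a suitable $a \in A_\lambda$, so the vacuum subspace is exactly $X_0 \otimes H_\rho \cong H_\rho$. Nondegeneracy of $\Lambda(A)$ acting on this vacuum subspace, yielding density of $\Lambda(A)(X_0 \otimes H_\rho)$ in the whole space, follows because $A_\lambda$ generates $X_\lambda$ by construction and the $X_\lambda$ span $\F$.

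For the reverse direction I would start from an abstract vacuum representation $\pi \colon A \to B(H)$ with vacuum subspace $H_0$ and recover a representation $\rho$ of $A_c$ on $H_0$. Here the crucial point is that $A_0$ preserves $H_0$: if $a \in A_0$ and $\Omega \in H_0$, then for $b \in A_\lambda$ with $\lambda > 0$ one checks $\pi(b)^* \pi(a)\Omega = 0$ using the fact that $b^* a$ is a sum of terms lying in negative spectral subspaces (since $b^* \in A_{-\lambda}$), and these act as lowering operators annihilating $\Omega$; more carefully, one uses $\langle \pi(b)^*\pi(a)\Omega, \eta\rangle = \langle \Omega, \pi(a^* b)\eta\rangle$ together with $a^* b \in \overline{\sum_{\mu} A_\mu}$ decomposed by spectral degree. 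The ideal $I = \overline{\sum_{\lambda>0} A_\lambda A_\lambda^*}$ must then act as zero on $H_0$: for $a \in A_\lambda$, $\lambda > 0$, the element $a a^*$ satisfies $\pi(aa^*)\Omega = \pi(a)\big(\pi(a)^*\Omega\big) = 0$ because $\pi(a)^*\Omega = 0$ by definition of $H_0$. Therefore the restriction of $\pi|_{A_0}$ to $H_0$ factors through $A_0/I = A_c$, defining $\rho$.

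The main obstacle, and the step I would spend the most care on, is proving that these two constructions are mutually inverse and natural, i.e. that they assemble into an equivalence of categories. In one direction, given $\rho$, computing the vacuum subspace of $\Ind\rho$ returns $\rho$ up to the canonical identification $X_0 \otimes_{A_c} H_\rho \cong H_\rho$, which is the content of the first paragraph. In the other direction, given a vacuum representation $\pi$, one must produce a unitary $\F \otimes_{A_c} H_0 \to H$ intertwining $\Ind(\rho)$ with $\pi$; the natural map sends $\xi \otimes \Omega$, for $\xi$ the image of $a \in A_\lambda$ in $X_\lambda$, to $\pi(a)\Omega$. The work is to verify this is well defined (that it kills $\overline{A_\lambda I}$, which follows from $I$ acting as zero on $H_0$), isometric (an inner-product computation reducing $\langle \pi(a)\Omega, \pi(b)\Omega'\rangle$ to $\langle \rho(E(a^* b) + I)\Omega, \Omega'\rangle$ via the conditional expectation $E$ and the vacuum condition, which kills all nonzero spectral components of $a^* b$), and surjective (precisely the density condition $\overline{\pi(A) H_0} = H$ defining a vacuum representation). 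Naturality in both directions is then a routine check that intertwiners are matched, and I would state it without belaboring the diagram chase. The delicate inequality to watch is that the isometry claim genuinely uses both the vacuum condition and the formula $\langle \xi, \eta \rangle_\F = E(\cdots) + I$ defining the Fock-module inner product, so the compatibility of $E$ with the quotient by $I$ is where I expect any hidden subtlety to reside.
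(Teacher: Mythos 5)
Your proposal is correct and follows essentially the same route as the paper: restrict a vacuum representation to its vacuum subspace $H_0$ (checking $A_0$-invariance and that $I$ acts trivially) to get $\rho$ on $A_c$, build the canonical isometry $\F\otimes_{A_c}H_0\to H$ via the degree-orthogonality forced by the vacuum condition with surjectivity coming from the density $\overline{\pi(A)H_0}=H$, and identify the vacuum subspace of $\Ind\rho$ with $X_0\otimes_{A_c}H_\rho$ by the same detection argument. The only differences are cosmetic (you phrase the isometry through the conditional expectation $E$, the paper through pairwise orthogonality of $\pi(A_\lambda)H_0$), so no gap to report.
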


\bp
Assume $\pi\colon A\to B(H)$ is a vacuum representation. Consider the subspace $H_0\subset H$ of vacuum vectors. This subspace is invariant under $A_0$ and the action of $I\subset A_0$ on this subspace is zero, so we get a representation $\rho\colon A_c\to B(H_0)$. Observe next that if $a\in A_\lambda$ and $b\in A_\mu$ for some $\lambda\ne\mu$, then $\pi(a)H_0\perp\pi(b)H_0$, since if, for example, $\lambda<\mu$, then  $a^*b\in A_{\mu-\lambda}$ and hence $\pi(b)^*\pi(a)H_0=\pi(a^*b)^*H_0=0$. It follows that we have a well-defined isometric map
\[
\F\otimes_{A_c}H_0\to H,\quad (a+\overline{A_\lambda I})\otimes\Omega\mapsto\pi(a)\Omega\quad\text{for}\quad a\in A_\lambda,\ \lambda\in\R.
\]
This map obviously intertwines the left actions of $A$. Since by assumption $\pi$ is a vacuum representation, this map is unitary, hence $\pi\sim\Ind\rho=\F\text{-}\Ind^A_{A_c}\rho$.

Note that the construction of $\rho$ gives rise to a functor $F$ from the category of vacuum representations of $A$ to the category of representations of $A_c$, and what we have shown is that $\Ind\circ F$ is naturally isomorphic to the identity functor.

Conversely, it is easy to see that if $\pi=\Ind\rho$ for a representation $\rho\colon A_c\to B(H'_0)$ of $A_c$, then $\pi$ is a vacuum representation.
To finish the proof of the proposition we have to show existence of a natural isomorphism $\rho\cong F(\Ind\rho)$. Let $H=\F\otimes_{A_c}H'_0$ be the Hilbert space underlying the representation $\pi=\Ind\rho$. As $X_0=A_c$, we can identify $H'_0$ with a subspace of $H$. Then it suffices to show that $H'_0$ is exactly the subspace $H_0\subset H$ of vacuum vectors. Clearly, $H'_0\subset H_0$. For the opposite inclusion, take $\Omega'\notin H'_0$. Then $\Omega'$ cannot be orthogonal to all the subspaces $X_\lambda\otimes_{A_c}H'_0=\overline{\pi(A_\lambda)H'_0}$, $\lambda>0$. Hence there exists $\Omega\in H'_0$ and $a\in A_\lambda$, with $\lambda>0$, such that $(\Omega',\pi(a)\Omega)\ne0$. But then $\pi(a)^*\Omega'\ne0$, so~$\Omega'$ is not a vacuum vector.
\ep

\begin{corollary}\label{cor:Ac-rep-theory}
We have:
\begin{enumerate}
\item $A_c=0$ if and only if $A$ has no nonzero vacuum representations;
\item $A_c=\C$ if and only if $A$ has a unique up to equivalence nonzero representation with a cyclic vacuum vector;
\item assuming that $A$ is separable, we have $A_c\cong\K(H)$ for a nonzero Hilbert space $H$ if and only if $A$ has a unique up to equivalence irreducible vacuum representation.
\end{enumerate}
\end{corollary}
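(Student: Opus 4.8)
The entire statement is meant to be read through the equivalence of categories of Proposition~\ref{prop:vacuum-rep}, which identifies representations of $A_c$ with vacuum representations of $A$ via $\rho\mapsto\Ind\rho$. Since an equivalence of categories preserves the zero object and reflects isomorphism classes, irreducibility, and (suitably interpreted) the existence of cyclic vectors, each assertion about $A$ translates into a purely C$^*$-algebraic statement about $A_c$ that is then elementary or classical. For (1), I would first record that a C$^*$-algebra is zero precisely when it admits no nonzero representation, one direction being trivial and the other the GNS construction. Under the equivalence, nonzero representations of $A_c$ correspond to nonzero vacuum representations of $A$: if $\pi$ is a vacuum representation on $H\ne0$ then $H_0\ne0$ (otherwise $\pi(A)H_0=0$ would force $H=0$), so the associated representation $\rho$ of $A_c$ on $H_0$ is nonzero, and conversely $\Ind\rho$ lives on a nonzero space whenever $\rho$ does. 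Hence $A_c=0$ iff $A_c$ has no nonzero representation iff $A$ has no nonzero vacuum representation.

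For (2), the key preliminary step is to match cyclic vacuum vectors with cyclic vectors for the $A_c$-representation. A representation of $A$ possessing a cyclic vacuum vector is automatically a vacuum representation, hence of the form $\Ind\rho$; writing $H=\F\otimes_{A_c}H_0$ and using that $\pi(a)\Omega\in X_\lambda\otimes_{A_c}H_0$ for $a\in A_\lambda$, one checks that a vacuum vector $\Omega\in H_0$ is cyclic for $\pi$ if and only if it is cyclic for $\rho$ (compressing $\pi(A)\Omega$ onto $H_0$ yields exactly $\rho(A_c)\Omega$). Thus (2) becomes: $A_c=\C$ iff $A_c$ has a unique, up to unitary equivalence, nonzero cyclic representation. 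The forward direction is clear, since the only nonzero cyclic representation of $\C$ is the one-dimensional one. For the converse, every irreducible representation is cyclic, so uniqueness forces a single irreducible representation $\sigma$ up to equivalence; if $\dim H_\sigma\ge2$ then $\sigma\oplus\sigma$ is again cyclic but is not equivalent to $\sigma$, a contradiction, so $\sigma$ is a character. As the Jacobson radical of a C$^*$-algebra vanishes and here equals $\ker\sigma$, this character is faithful and $A_c\cong\C$.

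For (3), the equivalence sends irreducible vacuum representations of $A$ to irreducible representations of $A_c$, so the statement becomes: for separable $A_c$ (which holds because $A$ is separable), $A_c\cong\K(H)$ with $H\ne0$ iff $A_c$ has a unique irreducible representation up to equivalence. The implication $\Rightarrow$ is the classical uniqueness of the irreducible representation of $\K(H)$. For $\Leftarrow$, separability is essential: by Glimm's dichotomy a separable C$^*$-algebra that is not type I has uncountably many inequivalent irreducible representations, so our $A_c$ must be type I. For a type I C$^*$-algebra every irreducible representation $\pi$ satisfies $\K(H_\pi)\subseteq\pi(A_c)$; moreover the unique irreducible representation is faithful, since its kernel is the only primitive ideal and hence equals the zero Jacobson radical. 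Then $J:=\pi^{-1}(\K(H_\pi))$ is a nonzero ideal whose quotient $A_c/J$ has no irreducible representation and therefore vanishes, forcing $\pi(A_c)=\K(H_\pi)$ and $A_c\cong\K(H_\pi)$.

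The genuine work is in (3): the reduction to type I via Glimm's theorem is precisely what makes the separability hypothesis necessary, and the final identification with $\K(H)$ rests on combining faithfulness of the unique irreducible representation with the type I containment $\K(H_\pi)\subseteq\pi(A_c)$. Parts (1) and (2) are essentially formal once the dictionary of Proposition~\ref{prop:vacuum-rep} is in place, the only subtlety being the correspondence in (2) between cyclic vacuum vectors and cyclic vectors for the induced representation.
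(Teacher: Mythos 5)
Your proposal is correct and follows the same route as the paper: reduce everything via the equivalence of Proposition~\ref{prop:vacuum-rep}, match cyclic vacuum vectors for $A$ with cyclic vectors for the $A_c$-representation on $H_0$, and then invoke the classical facts that $\C$ is the only C$^*$-algebra with exactly one nonzero cyclic representation and that the algebras $\K(H)$ are the only separable C$^*$-algebras with a unique irreducible representation up to equivalence. The only difference is cosmetic: where the paper cites Rosenberg for the last fact, you supply a proof via Glimm's dichotomy and the GCR property, and you likewise prove the $\C$-characterization directly; both arguments are sound.
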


\bp
Part (1) is obvious. For part (2), observe that a vacuum vector $\Omega$ is cyclic for $A$ if and only if~$\Omega$ is cyclic for the representation of $A_c$ on the space of vacuum vectors. Then (2) follows by observing that $\C$ is the only C$^*$-algebra that has exactly one cyclic representation up to equivalence. Similarly, (3) holds because the algebras $\K(H)$ are the only separable C$^*$-algebras that have exactly one irreducible representation up to equivalence~\cite{MR0057477}.
\ep

\begin{example}\label{ex:CAR}
Consider the CAR-algebra $A=\CAR(H)$ over a Hilbert space $H$. By definition~\cite{BR2}, it is generated by elements $a(\xi)$, $\xi\in H$, such that the map $\xi\mapsto a(\xi)$ is anti-linear and
\begin{equation}\label{eq:CAR}
a(\xi)a(\zeta)+a(\zeta)a(\xi)=0,\qquad a(\xi)a(\zeta)^*+a(\zeta)^*a(\xi)=(\zeta,\xi)1.
\end{equation}
Consider the gauge action $\gamma_t(a(\xi))=e^{-it}a(\xi)$.

Given a representation $\pi\colon\CAR(H)\to B(K)$, a unit vector $\Omega\in K$ is a vacuum vector if and only if $\pi(a(\xi))\Omega=0$ for all $\xi\in H$. If $\Omega$ is in addition cyclic, then the second relation in~\eqref{eq:CAR} implies that $K$ is spanned by vectors of the form $\pi(a(\xi_1)^*\dots a(\xi_n)^*)\Omega$. The same relation implies that the scalar products of such vectors are uniquely determined. Therefore there is at most one representation with a cyclic unit vacuum vector. The standard representation of $\CAR(H)$ by creation and annihilation operators on the fermionic Fock space does give an example of such a representation. By Corollary~\ref{cor:Ac-rep-theory} we therefore conclude that the crystal of $(\CAR(H),\gamma)$ is $\C$.

\smallskip

Note that the above argument used only the second relation in~\eqref{eq:CAR}. More generally, we can consider a Wick algebra $\WW(T)$ in the sense of~\cite{MR1359923}, which has generators $a_j$, $j\in J$, and relations of the form
\[
a_ia_j^*=\delta_{ij}1+\sum_{k,l}T^{kl}_{ij}a_l^*a_k.
\]
Here $T^{kl}_{ij}$ are complex numbers and in order to simplify matters we assume that for fixed $i$ and $j$ only finitely many of them are nonzero. We define the gauge action by $\gamma_t(a_j)=e^{-it}a_j$. Then, similarly to the CAR-algebra case, we can conclude that either the crystal of $(\WW(T),\gamma)$ is~$\C$ and there is a unique up to equivalence nonzero vacuum representation of $\WW(T)$ with a cyclic vacuum vector, or the crystal of $(\WW(T),\gamma)$ is zero and $\WW(T)$ has no nonzero vacuum representations.
\ee
\end{example}

The proof of Proposition~\ref{prop:vacuum-rep} also implies the following.

\begin{corollary} \label{cor:central-proj}
Given a representation $\pi\colon A\to B(H)$, consider the von Neumann algebra $M=\pi(A)''$. Then there exists a central projection $z\in M$ such that $zH$ is the space of the largest subrepresentation of~$\pi$ induced from $A_c$ by the Fock module $\F$.
\end{corollary}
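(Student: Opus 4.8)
The plan is to take $K=\overline{\pi(A)H_0}$, where $H_0\subset H$ is the vacuum subspace of $\pi$, and to show that the orthogonal projection $z$ onto $K$ is the sought-after central projection. First I would observe, exactly as in the proof of Proposition~\ref{prop:vacuum-rep}, that $H_0$ is invariant under $A_0$ with $I$ acting trivially, so that it carries a representation $\rho\colon A_c\to B(H_0)$, and that the isometry
\[
\F\otimes_{A_c}H_0\to H,\qquad (a+\overline{A_\lambda I})\otimes\Omega\mapsto\pi(a)\Omega,
\]
has image precisely $K$ (here one uses the density of $\A$ in $A$ to see that the closed span of the $\pi(a)H_0$ is all of $K$). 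Thus $K$ reduces $\pi$, and the subrepresentation carried by $K$ is unitarily equivalent to $\Ind\rho$; that is, it is induced from $A_c$ by the Fock module.

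Next I would check that $z$ is central in $M=\pi(A)''$. Since $K=\overline{\pi(A)H_0}$ is manifestly invariant under every $\pi(b)$, and $A$ is $*$-closed, $K$ reduces $\pi(A)$ and hence $z\in M'$. For the inclusion $z\in M$ I would argue that every $T\in M'$ preserves $H_0$: if $\Omega\in H_0$ and $a\in A_\lambda$ with $\lambda>0$, then $\pi(a)^*T\Omega=T\pi(a)^*\Omega=0$, because $T$ commutes with $\pi(a^*)=\pi(a)^*$. Consequently $T\pi(b)\Omega=\pi(b)T\Omega\in\pi(A)H_0$ for $b\in A$ and $\Omega\in H_0$, so $TK\subseteq K$; as $M'$ is a $*$-algebra, $K$ reduces $M'$ and therefore $z\in M''=M$. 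Combining the two inclusions gives $z\in M\cap M'=Z(M)$.

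Finally, to see that $zH=K$ is the \emph{largest} induced subrepresentation, I would take any subrepresentation of $\pi$ whose space $K'$ reduces $\pi$ and on which $\pi$ is induced from $A_c$; by Proposition~\ref{prop:vacuum-rep} this is equivalent to $\pi|_{K'}$ being a vacuum representation. Its vacuum vectors are exactly $K'\cap H_0$, and $K'$ is the closed span of $\pi(A)(K'\cap H_0)\subseteq\pi(A)H_0$, whence $K'\subseteq K$. The only step requiring a little care is the verification that elements of the commutant preserve the vacuum subspace (and hence the centrality of $z$); everything else is a direct transcription of the mechanism already established for Proposition~\ref{prop:vacuum-rep}, so I do not expect a serious obstacle here.
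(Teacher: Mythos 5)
Your proposal is correct and follows essentially the same route as the paper: take $z$ to be the projection onto $\overline{\pi(A)H_0}$, note via Proposition~\ref{prop:vacuum-rep} that this subrepresentation is induced by $\F$, and obtain centrality from the invariance of $H_0$ under $\pi(A)'$. You merely spell out details the paper leaves implicit (the commutant preserving $H_0$, and the maximality argument via vacuum vectors of a sub­representation), and these verifications are correct.
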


\bp
The proof of Proposition~\ref{prop:vacuum-rep} shows that if $H_0\subset H$ is the subset of vacuum vectors, then the representation of $A$ on $\overline{\pi(A)H_0}$ is induced by the Fock module. Clearly, this is the largest subrepresentation of $\pi$ induced by $\F$. Since $H_0$ is invariant under $\pi(A)'$, the subspace $\overline{\pi(A)H_0}$ is invariant under both $M=\pi(A)''$ and $M'=\pi(A)'$, hence the projection $z$ onto this subspace lies in the center of $M$.
\ep

Let us now look at the topological properties of the Fock module.

\begin{proposition}\label{prop:density}
The image $\Lambda(A)$ of $A$ in $\LL(\F)=M(\K(\F))$ is strictly dense.
\end{proposition}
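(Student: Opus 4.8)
The plan is to prove that the strict closure $C$ of $\Lambda(A)$ contains $\K(\F)$. Since any C$^*$-algebra is strictly dense in its multiplier algebra, $\K(\F)$ is strictly dense in $\LL(\F)=M(\K(\F))$; as $C$ is strictly closed it is in particular norm closed, so once $\K(\F)\subseteq C$ we get $C\supseteq\overline{\K(\F)}^{\,\mathrm{st}}=\LL(\F)$ and we are done. Because $\K(\F)$ is the closed span of the rank-one operators $\theta_{\xi,\eta}$ with $\xi\in X_\lambda$, $\eta\in X_{\lambda'}$ homogeneous, and the classes $[a]$ of elements $a\in A_\lambda$ exhaust $X_\lambda$, and because $C$ is a norm-closed subspace, it suffices to place each $\theta_{[a],[b]}$ (with $a\in A_\lambda$, $b\in A_{\lambda'}$) in $C$.

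The heart of the argument is to produce the \emph{vacuum projection} $P_0\colon\F\to X_0=A_c$ inside $C$. First, nondegeneracy of $\Lambda$ puts $1$ in $C$: an approximate unit of $A$ (indeed of $A_0$) maps to a net converging $*$-strongly, hence strictly, to $1$. Next I would take an approximate unit $(e_i)$ of the C$^*$-algebra $I$ and check that $\Lambda(e_i)$ converges strictly to the projection $P_{>0}$ onto $\bigoplus_{\mu>0}X_\mu$: on $X_0=A_0/I$ every $e_i$ acts as $0$, while on each $X_\mu$ with $\mu>0$ it acts as an approximate identity. Then $P_0=1-P_{>0}\in C$, since $C$ is a linear subspace. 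Here the convergences are $*$-strong on the bounded nets involved, which forces strict convergence by testing against rank-ones, via $\|(T_\alpha-T)\theta_{\xi,\eta}\|\le\|(T_\alpha-T)\xi\|\,\|\eta\|$ and the symmetric estimate on the other side.

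The key technical input, and the step I expect to be the main obstacle, is exactly the claim that the left action of $I$ on $X_\mu$ is nondegenerate for every $\mu>0$, which is what makes $\Lambda(e_i)\to P_{>0}$. This reduces to $\overline{IA_\mu}=A_\mu$. I would obtain it by viewing $A_\mu$ as a right C$^*$-Hilbert $A_0$-module with $\langle a,b\rangle=a^*b$, so that $\langle A_\mu,A_\mu\rangle=A_\mu^*A_\mu$: the standard module identity $\overline{A_\mu A_\mu^*A_\mu}=A_\mu$ then reads, after regrouping, $A_\mu=\overline{(A_\mu A_\mu^*)A_\mu}\subseteq\overline{IA_\mu}\subseteq A_\mu$, because $A_\mu A_\mu^*\subseteq I$. (This is the one place where the specific structure of $I$ enters, and it is the reason the naive candidate $\Lambda(ab^*)$ fails: that operator agrees with $\theta_{[a],[b]}$ only on $X_{\lambda'}$ and carries nonzero ``tails'' on the higher components $X_\mu$, $\mu>\lambda'$, which must be removed.)

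Finally, with $P_0\in C$ I would recover the rank-ones by sandwiching. A direct computation shows $\theta_{[a],[b]}=\Lambda(a)\,P_0\,\Lambda(b)^*$: the operator $\Lambda(b)^*=\Lambda(b^*)$ sends $X_\mu$ into $X_{\mu-\lambda'}$ (and annihilates it when $\mu<\lambda'$), $P_0$ retains only the resulting $X_0=A_c$-component, which occurs precisely for $\mu=\lambda'$, and $\Lambda(a)$ then carries it into $X_\lambda$; on $X_{\lambda'}$ the composite is $[c]\mapsto[a]\langle[b],[c]\rangle$, and it vanishes on all other components, so no tails survive. Since left multiplication by the fixed element $\Lambda(a)$ and right multiplication by the fixed element $\Lambda(b)^*$ are strictly continuous, $P_0\in C$ yields $\Lambda(a)P_0\Lambda(b)^*\in C$, i.e.\ $\theta_{[a],[b]}\in C$. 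This gives $\K(\F)\subseteq C$ and hence $C=\LL(\F)$, which is the assertion.
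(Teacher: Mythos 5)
Your proof is correct, and its endgame coincides with the paper's: the identity $\Lambda(a)Q_0\Lambda(b)^*=\theta_{\bar a,\bar b}$ for homogeneous $a,b$ (where $Q_0$ is the projection onto $X_0$), which places all of $\K(\F)$, and hence $\LL(\F)$, in the strict closure. Where you genuinely differ is in how $Q_0$ gets into the strict closure $C$. The paper does it in one shot inside $\Lambda(A_0)$: given finitely many homogeneous vectors $\xi_i\in X_{\lambda_i}$ and $\eps>0$, it builds a single positive contraction $a=e^{1/2}(1-f_1)\cdots(1-f_m)\cdots(1-f_1)e^{1/2}\in A_0$, with $e$ from an approximate unit of $A_0$ and $f_i\in I_{\lambda_i}$ chosen via the surjections $I_{\lambda}\to\K(X_{\lambda})$, so that $a$ nearly fixes the degree-zero vectors and nearly annihilates the positive-degree ones; thus $Q_0$ is exhibited directly as a strict limit of positive contractions from $\Lambda(A_0)$. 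You instead split $Q_0=1-P_{>0}$ and obtain the two pieces separately: $1$ as the strict limit of an approximate unit of $A_0$ (nondegeneracy of the action on $\F$), and $P_{>0}$ as the strict limit of $\Lambda(e_i)$ for an approximate unit $(e_i)$ of $I$, the key input being $\overline{IA_\mu}=A_\mu$ for $\mu>0$, which you correctly derive from $A_\mu A_\mu^*\subseteq I$ and $\overline{A_\mu A_\mu^*A_\mu}=A_\mu$; linearity of the strict closure and strict continuity of multiplication by the fixed multipliers $\Lambda(a)$, $\Lambda(b)^*$ then complete the sandwich step exactly as in the paper. Both routes rest on the same structural fact (the positive-degree ideals act nondegenerately on the positive-degree components), but yours needs only nondegeneracy rather than surjectivity onto $\K(X_\lambda)$ and replaces the telescoping product by the algebraic properties of the strict closure, making it a bit more economical; the paper's construction yields the slightly stronger (but unused) information that $Q_0$ is a strict limit of positive contractions in $\Lambda(A_0)$ itself. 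Your handling of strict convergence of bounded self-adjoint nets by testing against rank-one operators is also correct.
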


\bp
Denote by $Q_0$ the orthogonal projection $\F\to X_0=A_c$. Let us show first that $Q_0$ lies in the strict closure of $\Lambda(A_0)$. For this it suffices to show that given $\eps>0$ and vectors $\xi_i\in X_{\lambda_i}$ with $\lambda_i\ge0$, $1\le i\le n$, we can find $a\in A_0$ such that $0\le a\le1$, $\|a \xi_i\|<\eps$ if $\lambda_i>0$, and $\|\xi_i-a\xi_i\|<\eps$ if $\lambda_i=0$.

If $\lambda_i=0$ for all $i$, then as $a$ we can take an appropriate element of an approximate unit in~$A_0$, while if $\lambda_i>0$ for all $i$, then we can take $a=0$. Let us therefore assume that $\lambda_1,\dots,\lambda_m>0$ and $\lambda_{m+1},\dots,\lambda_n=0$ for some $1\le m< n$. Choose $e\in A_0$ such that $0\le e\le1$ and $\|\xi_i-e\xi_i\|<\eps$ for $i=m+1,\dots,n$. Next, for every $\lambda>0$, the left action of $I_{\lambda}$ on $X_{\lambda}$ defines a surjective $*$-homomorphism $I_{\lambda}\to\K(X_{\lambda})$. By choosing approximate units in the C$^*$-algebras $\K(X_{\lambda_i})$ we can therefore find one by one elements $f_i\in I_{\lambda_i}$, $1\le i\le m$, such that $0\le f_i\le 1$ and
\[
\|(1-f_i)(1-f_{i-1})\dots(1-f_1)e^{1/2}\xi_i\|<\eps\quad\text{for}\quad i=1,\dots,m.
\]
Since the elements $f_i$ act by zero on $X_0$, we can then take
\[
a=e^{1/2}(1-f_1)\dots(1-f_{m-1})(1-f_m)(1-f_{m-1})\dots(1-f_1)e^{1/2}.
\]
Hence $Q_0$ indeed lies in the strict closure of $\Lambda(A_0)$.

Now, observe that if $a\in A_\lambda$ and $b\in A_\mu$ for some $\lambda,\mu\ge0$, then $\Lambda(a)Q_0\Lambda(b)^*=\theta_{\bar a,\bar b}$, where $\bar a,\bar b$ are the images of $a$ and $b$ in $X_\lambda$ and $X_\mu$, resp., and $\theta_{\bar a,\bar b}\in\K(\F)$ is defined by $\theta_{\bar a,\bar b}\xi=\bar a\langle\bar b,\xi\rangle$. This implies that the strict closure of $\Lambda(A)$ contains~$\K(\F)$, hence it coincides with~$\LL(\F)$.
\ep

\begin{corollary}\label{cor:vacuum-rep}
A representation $\pi\colon A\to B(H)$ is a vacuum representation if and only if it factors through $\Lambda(A)\subset\LL(\F)$ and the representation of $\Lambda(A)$ we thus get is strict-strong continuous, that is, it is continuous with respect to the strict topology on $\LL(\F)=M(\K(\F))$ and the strong operator topology on $B(H)$.
\end{corollary}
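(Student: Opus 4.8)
The statement is essentially the identification of two descriptions of the same class of representations: by Proposition~\ref{prop:vacuum-rep} the vacuum representations are exactly those induced from $A_c$ by $\F$, so it suffices to show that a (nondegenerate) representation $\pi\colon A\to B(H)$ is induced from $A_c$ by $\F$ if and only if it factors as $\pi=\bar\pi\circ\Lambda$ for a strict-strong continuous representation $\bar\pi$ of $\Lambda(A)$. The forward direction is immediate from the construction of $\Ind\rho$: if $\pi\cong\Ind\rho$ for a representation $\rho$ of $A_c$, then $\pi$ is by definition the composition of $\Lambda$ with the canonical representation $\tilde\rho$ of $\LL(\F)=M(\K(\F))$ on $\F\otimes_{A_c}H_\rho$ obtained by extending the nondegenerate representation $\tilde\rho|_{\K(\F)}$ to the multiplier algebra. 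Since such an extension is automatically strict-strong continuous, $\pi$ factors through $\Lambda(A)$ and the resulting representation $\tilde\rho|_{\Lambda(A)}$ is strict-strong continuous.

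For the converse, suppose $\pi=\bar\pi\circ\Lambda$ with $\bar\pi\colon\Lambda(A)\to B(H)$ strict-strong continuous. The plan is to extend $\bar\pi$ to a nondegenerate representation of $\K(\F)$ and then invoke Morita theory. The enabling observation is that $\bar\pi$, being linear and continuous at $0$ from the strict to the strong topology, is uniformly continuous for the translation-invariant uniformities, hence sends strictly Cauchy nets in $\Lambda(A)$ to strongly Cauchy nets in $B(H)$; since a \emph{bounded} strongly Cauchy net in $B(H)$ converges, any bounded net $\Lambda(a_i)$ converging strictly to some $T\in\LL(\F)$ yields a strong limit $\tilde\rho(T):=\mathrm{SOT}\text{-}\lim_i\bar\pi(\Lambda(a_i))$, and a standard argument comparing two approximating nets for the same $T$ shows that $\tilde\rho(T)$ is independent of the chosen net. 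By the proof of Proposition~\ref{prop:density}, every $\theta_{\bar a,\bar b}=\Lambda(a)Q_0\Lambda(b)^*$ is a bounded strict limit of elements $\Lambda(acb^*)$, where $c$ runs through a bounded net in $A_0$ with $\Lambda(c)\to Q_0$ strictly; hence each finite-rank operator, and then by a diagonal argument each element of $\K(\F)$, admits a bounded strictly convergent approximation from $\Lambda(A)$, so $\tilde\rho$ is defined on all of $\K(\F)$. Using joint strict continuity and joint strong continuity of multiplication on bounded sets, one checks that $\tilde\rho|_{\K(\F)}$ is a (necessarily contractive) $*$-homomorphism.

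It remains to see that $\tilde\rho|_{\K(\F)}$ is nondegenerate and to convert it into a representation of $A_c$. For nondegeneracy, take an approximate unit $(k_\alpha)$ of $\K(\F)$, which is bounded and converges strictly to $1$, and check that $\tilde\rho(k_\alpha)\pi(c)=\tilde\rho(k_\alpha\Lambda(c))\to\pi(c)$ strongly for every $c\in A$; since $\pi$ is nondegenerate and $\|\tilde\rho(k_\alpha)\|\le1$, this forces $\tilde\rho(k_\alpha)\to1$ strongly. Finally, $X_0=A_c$ sits inside $\F$ with $\langle X_0,X_0\rangle=A_c$, so $\F$ is a full Hilbert $A_c$-module and $\K(\F)$ is Morita equivalent to $A_c$; by Rieffel's correspondence the nondegenerate representation $\tilde\rho|_{\K(\F)}$ has the form $\F\otimes_{A_c}(-)$ for a representation $\rho$ of $A_c$, and its multiplier extension precomposed with $\Lambda$ is precisely $\Ind\rho$. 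Thus $\pi\cong\Ind\rho$ is a vacuum representation by Proposition~\ref{prop:vacuum-rep}. The main obstacle is the extension step of the second paragraph: one must verify carefully that strict-strong continuity on the strictly dense but norm-uncontrolled subspace $\Lambda(A)$ really yields a well-defined $*$-homomorphism on $\K(\F)$, which is exactly why the reduction to \emph{bounded} approximating nets—available for the compacts, though not obviously for all of $\LL(\F)$—is essential.
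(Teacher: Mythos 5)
Your forward direction is essentially the paper's argument (the paper makes the ``automatic'' strict--strong continuity precise via Cohen--Hewitt factorization, $H=\tilde\pi(\K(\F))H$), and your endgame for the converse (nondegeneracy of the representation of $\K(\F)$, fullness of $\F$, Rieffel induction) also matches. The difference, and the problem, is in how you extend $\bar\pi$ from $\Lambda(A)$ to $\K(\F)$. The paper does this in one stroke: by Proposition~\ref{prop:density} together with a Kaplansky-type density theorem for multiplier algebras, the \emph{unit ball} of $\Lambda(A)$ is strictly dense in the unit ball of $\LL(\F)$, so $\tilde\pi$ extends by continuity. You try to avoid this by producing norm-bounded strictly convergent approximations from $\Lambda(A)$ only for elements of $\K(\F)$, and this is where the gap sits: your construction gives such approximations for the operators $\theta_{\bar a,\bar b}=\Lambda(a)Q_0\Lambda(b)^*$ and their finite sums, but the bound you get for a finite-rank operator $F=\sum_i\theta_{\bar a_i,\bar b_i}$ is of the form $\sum_i\|a_i\|\,\|b_i\|$, a ``nuclear-type'' quantity that is not controlled by $\|F\|$. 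Consequently the ``diagonal argument'' for a general $T\in\K(\F)$ fails: writing $T$ as a norm limit of finite-rank operators $T_n$ and splicing together the approximating nets for the $T_n$ does yield a strictly convergent net, but its norm bound is $\sup_n C_{T_n}$, which may be infinite. Boundedness is not a technicality here: your definition of $\tilde\rho(T)$ as a strong limit, and your verification of multiplicativity via joint strict/strong continuity on bounded sets, both collapse without a uniform bound. In other words, the statement you need --- every $T\in\K(\F)$ is a strict limit of a norm-bounded net from $\Lambda(A)$ --- is precisely (a case of) the Kaplansky-type density result the paper invokes, and it cannot be obtained by the routine limiting argument you sketch.

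The gap is repairable, but it requires an actual additional argument: either cite the Kaplansky density theorem for multiplier algebras as the paper does, or show that your $*$-homomorphism $\tilde\rho$ on the finite-rank operators is norm-contractive (for instance by a spectral-radius estimate, using that for self-adjoint finite-rank $G$ one has $\|\tilde\rho(G)\|=\lim_k\|\tilde\rho(G^k)\|^{1/k}$ and that the nuclear-type bound of $G^k$ grows like $\nu(G)\|G\|^{k-1}$), and then extend $\tilde\rho$ to $\K(\F)$ by norm continuity rather than by strict approximation. A secondary, smaller point: in your nondegeneracy argument you use $\tilde\rho(k_\alpha\Lambda(c))\to\pi(c)$ strongly, which again presupposes a continuity property of the extended map on bounded subsets of $\K(\F)$ applied to nets that do not lie in $\Lambda(A)$; once the extension is set up properly (as in the paper, on all of $\LL(\F)$, strict--strong continuous on bounded sets) this is immediate, but as written it leans on the same unproved step.
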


\bp
If $\pi\colon A\to B(H)$ is induced from a representation of $A_c$ by the Fock module, then by construction there exists a representation $\tilde\pi\colon\LL(\F)\to B(H)$ such that $\pi=\tilde\pi\circ\Lambda$ and $\tilde\pi|_{\K(\F)}$ is nondegenerate. Then $H=\tilde\pi(\K(\F))H$ by the Cohen--Hewitt factorization theorem, and hence $\tilde\pi|_{\Lambda(A)}$ is strict-strong continuous.

Conversely, assume $\pi=\tilde\pi\circ\Lambda$ for a strict-strong continuous representation $\tilde\pi\colon \Lambda(A)\to B(H)$. By Proposition~\ref{prop:density} and a version of Kaplansky's density theorem for multiplier algebras, the unit ball of $\Lambda(A)$ is strictly dense in the unit ball of $\LL(F)$. Hence $\tilde\pi$ extends by continuity to a representation of $\LL(\F)$ that is strict-strong continuous on the norm-bounded sets. We continue to denote this representation by $\tilde\pi$. Since the right C$^*$-Hilbert $A_c$-module $\F$ is full, the representation $\tilde\pi|_{\K(\F)}$ is induced from a representation of $A_c$, and hence the same is true for $\pi$.
\ep

Finally, let us look at consequences of the above results for states on $A$. The following terminology is motivated by~\cite{MR1953065}, see also~\cite{MR4150892}.

\begin{definition}
A positive linear functional $\phi$ on $A$ is said to be of \emph{finite type}, if the corresponding GNS-representation is a vacuum representation. It is said to be of \emph{infinite type}, if the corresponding GNS-representation has no nonzero vacuum subrepresentations.
\end{definition}

Note that if $\phi$ is of finite type, then the cyclic vector in the corresponding GNS-representation is not necessarily a vacuum vector.

Using Corollary~\ref{cor:vacuum-rep}, the same arguments as in the proof of~\cite{MR4150892}*{Lemma~6.5} show that:
\begin{itemize}
\item $\phi$ is of finite type if and only if $\phi=\psi\circ\Lambda$ for a strictly continuous positive linear functional~$\psi$ on $\Lambda(A)\subset\LL(\F)$;
\item $\phi$ is of infinite type if and only if there is no nonzero strictly continuous positive linear functional $\psi$ on $\Lambda(A)$ such that $\phi\ge\psi\circ\Lambda$.
\end{itemize}
Arguing as in the proofs of~\cite{MR4150892}*{Proposition~6.6 and Corollary~6.7} we then get the following Wold-type decomposition of positive linear functionals.

\begin{proposition} \label{prop:Wold-state}
Every positive linear functional $\phi$ on $A$ has a unique decomposition $\phi=\phi_f+\phi_\infty$ where $\phi_f$ is of finite type and $\phi_\infty$ is of infinite type. If $\phi$ satisfies the $\sigma$-KMS$_\beta$-condition for some $\beta\in\R$, then the same is true for $\phi_f$ and $\phi_\infty$.
\end{proposition}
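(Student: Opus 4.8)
The plan is to read off the decomposition from a Wold-type splitting of the GNS representation and then to use the two characterizations of finite and infinite type recorded just above, together with Corollaries~\ref{cor:central-proj} and~\ref{cor:vacuum-rep} and Proposition~\ref{prop:vacuum-rep}.

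\emph{Existence.} First I would pass to the GNS triple $(\pi_\phi,H_\phi,\xi_\phi)$ of $\phi$ and apply Corollary~\ref{cor:central-proj} to get a central projection $z\in M=\pi_\phi(A)''$ whose range $zH_\phi$ carries the largest subrepresentation of $\pi_\phi$ induced from $A_c$ by $\F$. Since $z$ is central it commutes with every $\pi_\phi(a)$, so the cross terms vanish and
\[
\phi(a)=(\pi_\phi(a)z\xi_\phi,z\xi_\phi)+(\pi_\phi(a)(1-z)\xi_\phi,(1-z)\xi_\phi)=:\phi_f(a)+\phi_\infty(a).
\]
The GNS representation of $\phi_f$ is the cyclic subrepresentation of $\pi_\phi$ generated by $z\xi_\phi$; it sits inside the vacuum representation on $zH_\phi$, and since the restriction of a strict–strong continuous representation of $\Lambda(A)$ to an invariant subspace stays strict–strong continuous, Corollary~\ref{cor:vacuum-rep} shows this subrepresentation is again a vacuum representation, i.e.\ $\phi_f$ is of finite type. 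Dually, the GNS representation of $\phi_\infty$ lives inside $(1-z)H_\phi$; by the maximality of $z$ it contains no nonzero induced subrepresentation, hence by Proposition~\ref{prop:vacuum-rep} no nonzero vacuum subrepresentation, so $\phi_\infty$ is of infinite type.

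\emph{Uniqueness.} The key claim is that $\phi_f$ is the \emph{largest} finite-type positive functional dominated by $\phi$ and, symmetrically, $\phi_\infty$ the largest infinite-type one. To prove the first, let $\mu\le\phi$ be of finite type and use the noncommutative Radon–Nikodym theorem to write $\mu(a)=(\pi_\phi(a)T^{1/2}\xi_\phi,T^{1/2}\xi_\phi)$ with $T\in\pi_\phi(A)'$, $0\le T\le1$. The GNS representation of $\mu$ is the vacuum subrepresentation of $\pi_\phi$ generated by $T^{1/2}\xi_\phi$, so by maximality of $z$ it lies in $zH_\phi$, forcing $(1-z)T^{1/2}\xi_\phi=0$; as $z\in M$ and $T\in\pi_\phi(A)'=M'$ the two commute, whence $T^{1/2}\xi_\phi=zT^{1/2}\xi_\phi=T^{1/2}z\xi_\phi$. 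Then $\phi_f-\mu$ equals the positive functional $a\mapsto(\pi_\phi(a)(1-T)^{1/2}z\xi_\phi,(1-T)^{1/2}z\xi_\phi)$, so $\mu\le\phi_f$. The same computation with $z$ replaced by $1-z$ shows that every infinite-type $\nu\le\phi$ satisfies $\nu\le\phi_\infty$ (here one uses that otherwise the $z$-part of the GNS representation of $\nu$ would be a nonzero vacuum subrepresentation). Given now any decomposition $\phi=\mu+\nu$ with $\mu$ finite and $\nu$ infinite, we get $\mu\le\phi_f$ and $\nu\le\phi_\infty$ while $\mu+\nu=\phi_f+\phi_\infty$; hence $(\phi_f-\mu)+(\phi_\infty-\nu)=0$ with both summands positive, so $\mu=\phi_f$ and $\nu=\phi_\infty$.

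\emph{The KMS statement.} Suppose $\phi$ is $\sigma$-KMS$_\beta$; in particular it is $\sigma$-invariant. Since $\sigma_t$ multiplies $A_\lambda$ by $e^{it\lambda}$, the vacuum subspace and the set $\pi(A)H_0$ are unchanged when $\pi$ is replaced by $\pi\circ\sigma_t$, so the classes of finite- and infinite-type functionals are preserved by $\psi\mapsto\psi\circ\sigma_t$. Applying this to $\phi=\phi_f+\phi_\infty$ and combining $\phi\circ\sigma_t=\phi$ with the uniqueness just proved gives $\phi_f\circ\sigma_t=\phi_f$ and $\phi_\infty\circ\sigma_t=\phi_\infty$. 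To upgrade $\sigma$-invariance to the full KMS$_\beta$-condition I would invoke modular theory: for a KMS$_\beta$ functional the cyclic vector $\xi_\phi$ is separating for $M$ and the modular group $\sigma^\phi$ is the normal extension of $\pi_\phi\circ\sigma_{-\beta t}\circ\pi_\phi^{-1}$. The central projection $z$ lies in $Z(M)\subset M^{\phi}$, hence is fixed by $\sigma^\phi$ and commutes with $\Delta^{it}$. A direct check, using the KMS identity for the vector state $x\mapsto(x\xi_\phi,\xi_\phi)$ on $M$ and the analyticity of $z$, then shows that $x\mapsto(xz\xi_\phi,\xi_\phi)$ is again $\sigma^\phi$-KMS; translated through $\pi_\phi$ this says exactly that $\phi_f$ is $\sigma$-KMS$_\beta$, and $\phi_\infty$ follows by replacing $z$ with $1-z$ (or by linearity).

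\emph{Main obstacle.} I expect the delicate point to be the localization step in the uniqueness argument, namely passing from ``$\mu$ is of finite type'' to $T^{1/2}\xi_\phi\in zH_\phi$: this requires identifying the GNS representation of the dominated functional as a genuine subrepresentation of $\pi_\phi$ and then exploiting both the maximality of $z$ and the fact that it commutes with the Radon–Nikodym derivative. The KMS transfer is the other sensitive spot, since it relies on $\xi_\phi$ being separating for $M$ — valid for KMS functionals but not for arbitrary positive ones — so that Tomita–Takesaki theory is available.
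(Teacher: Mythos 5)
Your proposal is correct and follows essentially the same route as the paper: the decomposition is exactly the one the paper records, $\phi_f=(\pi_\phi(\stub)z\xi_\phi,\xi_\phi)$ and $\phi_\infty=(\pi_\phi(\stub)(1-z)\xi_\phi,\xi_\phi)$ with $z$ the central projection of Corollary~\ref{cor:central-proj}, and your uniqueness (Radon--Nikodym domination) and KMS-transfer (centrality of $z$, fixedness under the modular group) arguments are the ones implicit in the paper's reference to \cite{MR4150892}*{Proposition~6.6 and Corollary~6.7}. The only cosmetic blemish is the expression $\pi_\phi\circ\sigma_{-\beta t}\circ\pi_\phi^{-1}$, which should be phrased as the normal extension of the dynamics to $\pi_\phi(A)''$ rather than via an inverse of $\pi_\phi$.
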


Explicitly, if $(H_\phi,\pi_\phi,\xi_\phi)$ is the GNS-triple associated with $\phi$ and $z\in\pi_\phi(A)''$ is the central projection given by Corollary~\ref{cor:central-proj}, then
\[
\phi_f=(\pi_\phi(\stub)z\xi_\phi,\xi_\phi),\qquad \phi_\infty=(\pi_\phi(\stub)(1-z)\xi_\phi,\xi_\phi).
\]

\subsection{KMS states of finite type}
Our next goal is to show that KMS-states of finite type are completely determined by the crystal. To formulate the result we first recall the construction of induced traces~\cite{MR2056837}.

Assume $B$ is a C$^*$-algebra, $Y$ is a right C$^*$-Hilbert $B$-module and  $\tau$ is a tracial positive linear functional on $B$. Then there is a unique strictly lower semicontinuous, in general infinite, trace $\Tr^Y_\tau$ on $\LL(Y)$ 
such that
\[
\Tr^Y_\tau(\theta_{\xi,\xi})=\tau(\langle\xi,\xi\rangle) \quad \text{for all} \quad \xi\in Y,
\]
see \cite{MR2056837}*{Theorem 1.1}. Explicitly, $\Tr^Y_\tau$ can be defined as follows. Assume $(u_i)_{i\in I}$ is an approximate unit in $\K(Y)$ such that, for every $i$, $u_i=\sum_{\xi\in J_i}\theta_{\xi,\xi}$ for some finite set $J_i\subset Y$. Then
\begin{equation}\label{eq:Tr}
\Tr^Y_\tau(T)=\sup_i\sum_{\xi\in J_i}\tau(\langle\xi,T\xi\rangle)=\lim_i\sum_{\xi\in J_i}\tau(\langle\xi,T\xi\rangle)\quad \text{for}\quad T\in\LL(Y)_+.
\end{equation}

We will write $\dim_\tau Y$ for $\Tr^Y_\tau(1)$. Let us record a few properties of this dimension function.

\begin{lemma}\label{lem:dimension}
Assume $\tau$ is a tracial state on $B$. Then we have:
\begin{enumerate}
  \item if $(H_\tau,\pi_\tau,\xi_\tau)$ is the GNS-triple associated with $\tau$ and $M=\pi_\tau(B)''$, then $\dim_\tau Y$ is the von Neumann dimension of the right Hilbert $M$-module $Y\otimes_B H_\tau$ with respect to the tracial state $(\stub \xi_\tau,\xi_\tau)$ on $M$;
  \item if $Y$ is topologically generated by $n$ elements, then $\dim_\tau Y\le n$.
\end{enumerate}
\end{lemma}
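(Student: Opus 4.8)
The plan is to deduce both parts from the characterization of $\Tr^Y_\tau$ as the unique strictly lower semicontinuous trace on $\LL(Y)$ satisfying $\Tr^Y_\tau(\theta_{\xi,\xi})=\tau(\langle\xi,\xi\rangle)$, by identifying it with the canonical trace underlying the Murray--von Neumann dimension. Write $\operatorname{tr}=(\,\cdot\,\xi_\tau,\xi_\tau)$ for the faithful normal trace on $M=\pi_\tau(B)''$; since $\tau$ is tracial, $\xi_\tau$ is cyclic and separating for $M$, so $H_\tau\cong L^2(M)$ carries a commuting right $M$-action through $M'=\pi_\tau(B)'$. First I would form the Hilbert space $K=Y\otimes_B H_\tau$, completed with respect to the inner product induced by $\pi_\tau$, for which $\|\xi\otimes\xi_\tau\|^2=\tau(\langle\xi,\xi\rangle)$. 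The right $M$-action on $H_\tau$ commutes with $\pi_\tau(B)$, hence descends to $1\otimes(\,\cdot\,)$ on $K$ and makes $K$ a right Hilbert $M$-module. The unital representation $\LL(Y)\to B(K)$, $T\mapsto T\otimes1$, then lands in the commutant $\operatorname{End}_M(K)=(1\otimes M')'$, which carries the canonical faithful normal semifinite trace $\operatorname{Tr}_M$ with $\dim_M K=\operatorname{Tr}_M(1_K)$.

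The crux is the computation on rank-one operators. Under interior tensoring one has $\theta_{\xi,\eta}\otimes1=\Theta_{\xi\otimes\xi_\tau,\eta\otimes\xi_\tau}$, where $\Theta$ denotes the $M$-valued rank-one operators on $K$, and since $\operatorname{Tr}_M$ is characterized by $\operatorname{Tr}_M(\Theta_{u,u})=\|u\|^2$, this gives
\[
\operatorname{Tr}_M(\theta_{\xi,\xi}\otimes1)=\|\xi\otimes\xi_\tau\|^2=\tau(\langle\xi,\xi\rangle)=\Tr^Y_\tau(\theta_{\xi,\xi}).
\]
Thus $T\mapsto\operatorname{Tr}_M(T\otimes1)$ is a trace on $\LL(Y)$ agreeing with $\Tr^Y_\tau$ on all finite-rank operators; by the uniqueness statement (or directly from the supremum formula~\eqref{eq:Tr} and normality of $\operatorname{Tr}_M$) the two coincide on $\LL(Y)_+$. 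Evaluating at $T=1$, whose image is $1_K$, yields $\dim_\tau Y=\Tr^Y_\tau(1)=\operatorname{Tr}_M(1_K)=\dim_M K$, which is~(1).

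For~(2) I would pass to the von Neumann side via~(1). Assuming $Y=\overline{\sum_{i=1}^n\eta_i B}$, set $\zeta_i=\eta_i\otimes\xi_\tau\in K$. Since $\xi_\tau$ is cyclic for $M'$ and the $\eta_iB$ are dense in $Y$, the $\zeta_i$ topologically generate $K$ as a right $M$-module, so $K=\overline{\sum_i K_i}$ with $K_i=\overline{(1\otimes M')\zeta_i}$. Each $K_i$ is cyclic, so its cyclic vector determines an $M$-linear map $L^2(M)\to K_i$ with dense range and $\dim_M K_i\le\dim_M L^2(M)=1$ by monotonicity. The summation map $\bigoplus_i K_i\to K$ is $M$-linear with dense range, so monotonicity and additivity of $\dim_M$ give $\dim_M K\le\sum_i\dim_M K_i\le n$, whence $\dim_\tau Y\le n$.

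The main obstacle is part~(1): fixing the right $M$-module structure on $Y\otimes_B H_\tau$ and verifying that $\operatorname{Tr}_M$ restricts to $\Tr^Y_\tau$ along $T\mapsto T\otimes1$, in particular the operator identity $\theta_{\xi,\eta}\otimes1=\Theta_{\xi\otimes\xi_\tau,\eta\otimes\xi_\tau}$ and the passage from finite-rank operators to $1_K$. Once~(1) is available, part~(2) is essentially formal, using only subadditivity of the von Neumann dimension and the cyclic bound $\dim_M K_i\le1$.
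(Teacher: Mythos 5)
Your argument is correct, but it follows a genuinely different route from the paper's. The paper reduces part (1) to the case of a module topologically generated by $\xi_1,\dots,\xi_n$, builds the explicit $M$-module map $T\colon H_\tau^n\to Y\otimes_B H_\tau$, $(\Lambda_\tau(a_i))_i\mapsto\sum_i\xi_i\otimes\Lambda_\tau(a_i)$, and uses the polar decomposition of $T^*$: the von Neumann dimension is the trace of the projection $uu^*\in\Mat_n(M)$, which is then computed as $\lim_k\sum_i\tau(\langle a_k\xi_i,a_k\xi_i\rangle)$ with $a_k=k^{1/2}\bigl(1+k\sum_i\theta_{\xi_i,\xi_i}\bigr)^{-1/2}$, and identified with $\Tr^Y_\tau(1)$ because the elements $\sum_i\theta_{a_k\xi_i,a_k\xi_i}$ form an approximate unit in $\K(Y)$; part (2) is immediate since a projection in $\Mat_n(M)$ has trace at most $n$. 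You instead invoke the standard dimension theory of right modules over a finite von Neumann algebra: the canonical trace $\operatorname{Tr}_M$ on the commutant of the right $M$-action, the identity $\theta_{\xi,\eta}\otimes1=\Theta_{\xi\otimes\xi_\tau,\eta\otimes\xi_\tau}$ on bounded vectors, and then either uniqueness of the strictly lower semicontinuous extension or the supremum formula~\eqref{eq:Tr} to conclude $\operatorname{Tr}_M(T\otimes1)=\Tr^Y_\tau(T)$ for all $T\in\LL(Y)_+$; your part (2) then uses that cyclic right $M$-modules have dimension at most one together with monotonicity and subadditivity of $\dim_M$. What your approach buys is a stronger statement (the two traces agree on all of $\LL(Y)_+$, not just at $1$) with no reduction to finitely generated modules, at the price of quoting the machinery of bounded vectors and the canonical trace on $\operatorname{End}_M(K)$, plus the small verifications you only sketch (boundedness of the vectors $\xi\otimes\xi_\tau$, strict-to-strong continuity of $T\mapsto T\otimes1$ or the estimate $T^{1/2}u_iT^{1/2}\le T$ combined with lower semicontinuity of $\operatorname{Tr}_M$). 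The paper's computation is more self-contained and delivers (1) and (2) simultaneously from the single matrix-trace calculation; both proofs are sound.
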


\bp
It suffices to prove (1) for C$^*$-Hilbert modules $Y$ topologically generated by finitely many elements $\xi_1,\dots,\xi_n$. For this consider the bounded right $M$-module map
\[
T\colon L^2(M,\tau)^n=H_\tau^n\to Y\otimes_B H_\tau,\quad (\Lambda_\tau(a_i))_i\mapsto \sum_i\xi_i\otimes \Lambda_\tau(a_i),
\]
where $\Lambda_\tau\colon B\to H_\tau$ is the GNS-map, that is, $\Lambda_\tau(a)=\pi_\tau(a)\xi_\tau$. Then
\[
TT^*=\sum_i\theta_{\xi_i,\xi_i}\otimes1\quad\text{and}\quad T^*T=(\pi_\tau(\langle\xi_i,\xi_j\rangle))_{i,j}.
\]

Consider the polar decomposition $T^*=u|T^*|$. Then $u$ is an isomorphism of $Y\otimes_B H_\tau$ onto a right $M$-submodule of $L^2(M,\tau)^n$ and by definition the von Neumann dimension of $Y\otimes_B H_\tau$ is the trace of the projection $uu^*\in\Mat_n(M)$. We need to show that this trace equals $\dim_\tau Y$. For this consider the operators
\[
a_k=\frac{k^{1/2}1}{\left(1+k\sum_i\theta_{\xi_i,\xi_i}\right)^{1/2}}\in\LL(Y)\quad \text{and}\quad T_k=(a_k\otimes 1)T=\frac{k^{1/2}}{\left(1+k|T^*|^2\right)^{1/2}}T
\]
for $k\ge1$. Then, on the one hand, $T_k^*T_k$ converges strongly to $uu^*$ and hence the von Neumann dimension of $Y\otimes_B H_\tau$ equals
\[
\lim_{k\to\infty}(\Tr\otimes\tau)(T_k^*T_k)=\lim_{k\to \infty}\sum_i\tau(\langle a_k\xi_i,a_k\xi_i\rangle).
\]
On the other hand, since $\sum_i\theta_{\xi_i,\xi_i}$ is strictly positive in $\K(Y)$, the elements
\[
\sum_i a_k\theta_{\xi_i,\xi_i}a_k=\sum_i\theta_{a_k\xi_i,a_k\xi_i}
\]
form an approximate unit in $\K(Y)$ and hence the last limit above equals $\Tr_\tau^Y(1)=\dim_\tau Y$. This proves part (1). Part (2) is an immediate consequence of this proof.
\ep

Returning to an almost periodic dynamical system $(A,\sigma)$, the automorphisms $\sigma_t$ of $A$ extend to unitaries $U_t$ on the Fock module $\F$. Let $D$ be the generator of $(U_t)_{t\in\R}$, so $U_t=e^{itD}$, where
\[
Dx=\lambda x\quad\text{for}\quad x\in X_\lambda,\ \lambda\in\Gamma_+.
\]
For every $\beta\in\R$ and a positive trace $\tau$ on $A_c$ we then get an $(\Ad U)$-KMS$_\beta$-weight
\[
\Tr^\F_\tau(e^{-\beta D/2}\stub e^{-\beta D/2})
\]
on~$\K(\F)$, which we will write as $\Tr^\F_\tau(\stub e^{-\beta D})$. It extends to a strictly lower semicontinuous weight on $\LL(\F)$, which we continue to denote by $\Tr^\F_\tau(\stub e^{-\beta D})$. We remark that if $\Tr^\F_\tau$ is infinite or $e^{-\beta D}$ is unbounded, then $\Tr^\F_\tau(\stub e^{-\beta D})$ should rather be interpreted as the KMS$_\beta$-weight induced by $\tau$ using the equivariant C$^*$-correspondence $\F$, see \cite{MR2056837}*{Section~3}.

If $\Tr^\F_\tau(\stub e^{-\beta D})$ happens to be a state, that is,
\[
\Tr^\F_\tau(e^{-\beta D})=\sum_{\lambda\in\Gamma_+}e^{-\beta\lambda}\dim_\tau{X_\lambda}=1,
\]
then $\phi=\Tr^\F_\tau(\Lambda(\stub)e^{-\beta D})$ is a $\sigma$-KMS$_\beta$-state on $A$, as $(\Ad U_t)\circ\Lambda=\Lambda\circ\sigma_t$. Note that if $a\in A_\mu$, then
\[
\phi(a)=\begin{dcases*}
\sum_{\lambda\in\Gamma_+}e^{-\beta\lambda}\Tr^{X_\lambda}_\tau(\Lambda(a)|_{X_\lambda}),& if $\mu=0$,\\
0,& otherwise.
\end{dcases*}
\]

By construction the state $\phi$ is a KMS-state of finite type. It turns our this way we get all such states.

\begin{theorem}\label{thm:KMS-finite-type}
For every $\beta\in\R$, the map
\[
\tau\mapsto\Tr^\F_\tau(\Lambda(\stub)e^{-\beta D})
\]
is an affine bijection between the positive traces $\tau$ on $A_c$ such that $\sum_{\lambda\in\Gamma_+}e^{-\beta\lambda}\dim_\tau X_\lambda=1$
and the $\sigma$-KMS$_\beta$-states on $A$ of finite type.
\end{theorem}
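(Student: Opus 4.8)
The plan is to verify the four properties in turn: the map is affine, well-defined into finite-type KMS$_\beta$-states, injective, and surjective. Affineness is immediate since $\tau\mapsto\Tr^\F_\tau$ is linear in $\tau$ and the normalization $\sum_{\lambda}e^{-\beta\lambda}\dim_\tau X_\lambda=1$ is an affine condition, and the discussion preceding the statement already shows that each admissible $\tau$ produces a $\sigma$-KMS$_\beta$-state of finite type. So the content lies in injectivity and surjectivity, and I would handle both through the strictly continuous functional $\psi$ on $\LL(\F)$ attached to a finite-type state by Corollary~\ref{cor:vacuum-rep}.

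For injectivity I would recover $\tau$ from $\phi=\Tr^\F_\tau(\Lambda(\stub)e^{-\beta D})$ by compressing to the vacuum corner. Let $Q_0\in\LL(\F)$ be the projection onto $X_0=A_c$, so that $D=0$ and $e^{-\beta D}$ acts as the identity on $X_0$. Writing $\bar a$ for the image of $a\in A_0$ in $A_c=X_0$, the identity $\Lambda(a)Q_0\Lambda(b)^*=\theta_{\bar a,\bar b}$ from the proof of Proposition~\ref{prop:density} together with $\Tr^{X_0}_\tau(\theta_{\bar a,\bar b})=\tau(\bar b^*\bar a)$ gives $\tau(\bar b^*\bar a)=\Tr^\F_\tau(\theta_{\bar a,\bar b}e^{-\beta D})$. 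Since $\phi$ is of finite type, $\phi=\psi\circ\Lambda$ for a strictly continuous positive $\psi$ on $\LL(\F)$, uniquely determined by $\phi$ and equal to $\Tr^\F_\tau(\stub\,e^{-\beta D})$ on $\K(\F)$. As $Q_0$ is a strict limit of $\Lambda(e_i)$ with $e_i\in A_0$ (from the same proof), strict continuity of $\psi$ yields $\tau(\bar b^*\bar a)=\psi(\theta_{\bar a,\bar b})=\lim_i\phi(ae_ib^*)$. Because the products $\bar b^*\bar a$ span a dense subspace of $A_c$, this formula recovers $\tau$ from $\phi$, giving injectivity.

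For surjectivity I would reverse this. Given a $\sigma$-KMS$_\beta$-state $\phi$ of finite type, Corollary~\ref{cor:vacuum-rep} provides a strictly continuous positive $\psi$ on $\LL(\F)$ with $\phi=\psi\circ\Lambda$. The crucial step is to transport the KMS condition: since $\Lambda\circ\sigma_t=(\Ad U_t)\circ\Lambda$ and, for $b\in A_\lambda$, the relation reads $\phi(ab)=\phi(b\sigma_{i\beta}(a))$, a density-and-continuity argument shows that $\psi$ is an $(\Ad U)$-KMS$_\beta$-functional on $\LL(\F)$, so its restriction to $\K(\F)$ is a KMS$_\beta$-functional for the dynamics generated by $e^{itD}$. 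I would then invoke the classification of such functionals by traces on the coefficient algebra from~\cite{MR2056837} (the module $\F$ being full over $A_c$ since $X_0=A_c$) to obtain a positive trace $\tau$ on $A_c$ with $\psi=\Tr^\F_\tau(\stub\,e^{-\beta D})$ on $\K(\F)$; as both are determined on the strictly dense ideal $\K(\F)$, they agree, whence $\phi=\Tr^\F_\tau(\Lambda(\stub)e^{-\beta D})$. Finally $\phi(1)=1$ forces $\Tr^\F_\tau(e^{-\beta D})=\sum_\lambda e^{-\beta\lambda}\dim_\tau X_\lambda=1$.

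The main obstacle is precisely this surjectivity step, where the KMS condition on $\phi$ must be converted into the traciality of $\tau$. The generator $D$ is unbounded and $e^{-\beta D}$ need not be bounded, and $\Tr^\F_\tau$ may be infinite, so $\Tr^\F_\tau(\stub\,e^{-\beta D})$ has to be treated as the KMS$_\beta$-weight induced by the equivariant correspondence $\F$ rather than as a literal operator-weighted trace; one must check carefully that the only freedom in such a functional is the choice of a genuinely tracial $\tau$. I expect the verification that $\psi$ satisfies the $\Ad U$-KMS relation on all of $\K(\F)$, not merely on the image $\Lambda(A)$, and that this pins down a trace, to be the technically delicate part, whereas injectivity is a direct computation once the vacuum corner $Q_0$ is in hand.
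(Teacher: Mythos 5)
Your proposal is correct and follows essentially the same route as the paper: identify a finite-type KMS$_\beta$-state with a strictly continuous state $\psi$ on $\LL(\F)$ via $\phi=\psi\circ\Lambda$, use strict density of $\Lambda(A)$ to transfer the KMS condition to the $(\Ad U)$-dynamics, and invoke the induced-trace classification of \cite{MR2056837}*{Theorem~3.2} together with $X_0=A_c$ to obtain a finite trace $\tau$. Your separate injectivity computation via the corner $Q_0$ is fine but not needed, since the uniqueness of $\tau$ in that theorem already yields injectivity, which is exactly how the paper concludes.
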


\bp
Assume $\phi$ a $\sigma$-KMS$_\beta$-state on $A$ of finite type. Let $\psi$ be the unique strictly continuous state on $\LL(\F)$ such that $\phi=\psi\circ\Lambda$. Since $\Lambda(A)$ is strictly dense in $\LL(\F)$, the state $\psi$ satisfies the $(\Ad U)$-KMS$_\beta$-condition. Hence, by \cite{MR2056837}*{Theorem~3.2}, the  $(\Ad U)$-KMS$_\beta$-state $\psi|_{\K(\F)}$ is induced by a uniquely defined semifinite lower semicontinuos trace $\tau$ on $A_c$, so that $\psi=\Tr^\F_\tau(\stub e^{-\beta D})$ on $\LL(\F)$. As $\Tr^\F_\tau(e^{-\beta D})=1$ and $X_0=A_c$, the trace $\tau$ must be finite. This proves the stated bijection. It is also clear that the map $\tau\mapsto\Tr^\F_\tau(\Lambda(\stub)e^{-\beta D})$ is affine.
\ep

Note that if a  tracial state $\tau$ on $A_c$ satisfies $\sum_{\lambda\in\Gamma_+}e^{-\beta\lambda}\dim_\tau X_\lambda<\infty$ for some $\beta$, then it satisfies the same condition for all larger $\beta$'s. Therefore we can say that every phase defined by an equilibrium state of finite type persists all the way to zero temperature. It follows that the ground state corresponding to $\tau$ under the bijection given by Proposition~\ref{prop:ground} is a so-called KMS$_\infty$-state. The study of such states is a separate interesting topic that we do not touch upon in this paper, see~\cite{MR4673062} and the references there.

\subsection{KMS states for low temperatures}

Our next goal is to show that under additional assumptions all KMS-states are of finite type below some temperature.

\begin{theorem}\label{thm:KMS-full}
Assume that there exist closed right $A_0$-submodules $B_\lambda\subset A_\lambda$, $\lambda>0$, and a number $\beta_0\ge0$ such that for every $\beta>\beta_0$ we have:
\begin{enumerate}
\item $\sum_{\lambda>0}B_\lambda A_{\mu-\lambda}$ is dense in $A_\mu$ for every $\mu>0$;
\item $\sum_{\lambda>0}e^{-\beta\lambda}\dim_\tau B_\lambda<1$ for every tracial state $\tau$ on $A_0$.
\end{enumerate}
Then, for every $\beta>\beta_0$, all $\sigma$-KMS$_\beta$-states on $A$ are of finite type, so by Theorem~\ref{thm:KMS-finite-type} they are in a one-to-one correspondence with the positive traces $\tau$ on $A_c$ such that $\sum_{\lambda\in\Gamma_+}e^{-\beta\lambda}\dim_\tau X_\lambda=1$.
\end{theorem}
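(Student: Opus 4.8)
The plan is to prove that for $\beta>\beta_0$ there are \emph{no} nonzero $\sigma$-KMS$_\beta$-functionals of infinite type. Granting this, Proposition~\ref{prop:Wold-state} forces every $\sigma$-KMS$_\beta$-state $\phi$ to coincide with its finite-type part $\phi_f$, and Theorem~\ref{thm:KMS-finite-type} then supplies the asserted one-to-one correspondence. So I would argue by contradiction: suppose $\psi$ is a nonzero $\sigma$-KMS$_\beta$-functional of infinite type, and (after rescaling) assume $\psi$ is a state, with GNS triple $(\pi,H,\xi)$. Being of infinite type, the representation $\pi$ has no nonzero vacuum subrepresentation, so by Corollary~\ref{cor:central-proj} (with $z=0$) the space $H_0$ of vacuum vectors is zero.

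The first step is to convert this into a density statement adapted to the modules $B_\lambda$. Since $\sum_{\lambda>0}B_\lambda A_{\mu-\lambda}$ is dense in $A_\mu$ by hypothesis (1), a vector is annihilated by every $\pi(a)^*$ with $a\in A_\mu$, $\mu>0$, as soon as it is annihilated by every $\pi(b)^*$ with $b\in B_\lambda$, $\lambda>0$; hence $H_0=\bigl(\overline{\sum_{\lambda>0}\pi(B_\lambda)H}\bigr)^\perp$, and infinite type gives $\overline{\sum_{\lambda>0}\pi(B_\lambda)H}=H$. For each $\lambda$ the assignment $b\otimes\eta\mapsto\pi(b)\eta$ defines an isometry $T_\lambda\colon B_\lambda\otimes_{A_0}H\to H$ (both sides have squared norm $(\pi(\langle b,b\rangle)\eta,\eta)$), whose range projection $p_\lambda:=T_\lambda T_\lambda^*$ projects onto $\overline{\pi(B_\lambda)H}$. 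The density just obtained reads $\bigvee_{\lambda>0}p_\lambda=1$.

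The heart of the argument is a trace estimate, for which I would pass to $M=\pi(A)''$. The KMS$_\beta$ vector $\xi$ is cyclic and separating, the vector state $\omega=(\,\cdot\,\xi,\xi)$ is KMS$_\beta$ for the normal extension of $\sigma$ to $M$, and its modular group is $t\mapsto\sigma_{-\beta t}$. Each $p_\lambda$ is fixed by the modular group, since $p_\lambda$ is the strong limit of the sums $\sum_{j\in F}\pi(e_je_j^*)$ over a frame $(e_j)$ of $B_\lambda$ and each $e_je_j^*\in A_0$ is $\sigma$-invariant; thus $p_\lambda$ lies in the centralizer $M^\omega$, on which $\omega$ restricts to a faithful normal trace $\mathrm{tr}$. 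Choosing the frame so that the finite sums $\sum_{j\in F}\theta_{e_j,e_j}$ form an approximate unit of $\K(B_\lambda)$, the KMS relation $\psi(e_je_j^*)=e^{-\beta\lambda}\psi(e_j^*e_j)$ gives
\[
\mathrm{tr}(p_\lambda)=\sum_j\|\pi(e_j)^*\xi\|^2=\sum_j\psi(e_je_j^*)=e^{-\beta\lambda}\sum_j\tau(\langle e_j,e_j\rangle)=e^{-\beta\lambda}\dim_\tau B_\lambda,
\]
where $\tau=\psi|_{A_0}$ is a tracial state on $A_0$ and the last equality is~\eqref{eq:Tr} with $T=1$.

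Finally I would combine these, and here lies the only delicate point. The operator inequality $\bigvee_\lambda p_\lambda\le\sum_\lambda p_\lambda$ is \emph{false} in general, so one cannot argue with the single vector $\xi$; however, the corresponding \emph{trace} inequality does hold. From the identity $\mathrm{tr}(p\vee q)+\mathrm{tr}(p\wedge q)=\mathrm{tr}(p)+\mathrm{tr}(q)$, valid for any trace, one obtains $\mathrm{tr}\bigl(\bigvee_{\lambda\in S}p_\lambda\bigr)\le\sum_{\lambda\in S}\mathrm{tr}(p_\lambda)$ for every finite set $S$; letting $S$ grow and using normality of $\mathrm{tr}$ together with $\bigvee_\lambda p_\lambda=1$ yields $1=\mathrm{tr}(1)\le\sum_{\lambda>0}e^{-\beta\lambda}\dim_\tau B_\lambda$. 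This contradicts hypothesis (2), which for $\beta>\beta_0$ bounds the right-hand side strictly below $1$. Hence $\psi=0$, as required. It is precisely this replacement of the unavailable operator bound by the trace bound extracted from the KMS/modular structure that is the crux of the proof.
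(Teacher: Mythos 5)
Your proposal is correct and follows essentially the same route as the paper's proof: your projections $p_\lambda$ onto $\overline{\pi(B_\lambda)H}$ are the paper's $Q_\lambda$, the identification of the vacuum space as $\bigl(\bigvee_{\lambda>0}p_\lambda\bigr)^\perp$ via hypothesis (1), the computation $\mathrm{tr}(p_\lambda)=e^{-\beta\lambda}\dim_\tau B_\lambda$ from the KMS condition applied to an approximate unit of $\K(B_\lambda)$ built from finitely many $\theta_{a,a}$, and the final use of trace subadditivity and normality over the join are exactly the paper's steps. The only differences are cosmetic: you argue by contradiction with an infinite-type state where the paper directly bounds $\tau(Q_0)\ge 1-\sum_{\lambda>0}e^{-\beta\lambda}\dim_\tau B_\lambda>0$, and your density argument uses the norm density in (1) directly, whereas the paper's slightly longer argument via the separating GNS vector is designed to accommodate the weaker $\lVert\cdot\rVert_\tau$-density mentioned in its subsequent remark.
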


\bp
Fix $\beta>\beta_0$. By Proposition~\ref{prop:Wold-state} it suffices to show that there are no $\sigma$-KMS$_\beta$-states of infinite type. Equivalently, we have to show that if $\phi$ is a  $\sigma$-KMS$_\beta$-state and $(H_\phi,\pi_\phi,\xi_\phi)$ is the corresponding GNS-triple, then the subspace $H_0\subset H_\phi$ of vacuum vectors is nonzero.

Put $M=\pi_\phi(A)''$. We continue to denote the faithful state $(\stub\xi_\phi,\xi_\phi)$ on $M$ by $\phi$. Consider its centralizer $M_\phi$ and the tracial state $\tau=\phi|_{M_\phi}$. We have $\pi_\phi(A_0)\subset M_\phi$, and will use the same symbol~$\tau$ for the trace $\tau\circ\pi_\phi$ on $A_0$.

Denote by $Q_0$ the projection onto $H_0$. For every $\lambda>0$, denote by $Q_\lambda$ the projection onto $\overline{\pi_\phi(B_\lambda) H_\phi}$. We claim that
\begin{equation}\label{eq:Q0}
Q_0=1-\bigvee_{\lambda>0}Q_\lambda.
\end{equation}
In other words, we claim that a vector $\Omega\in H_\phi$ is a vacuum vector if and only if $\Omega\perp \pi(B_\lambda)H_\phi$ for all~$\lambda>0$. The ``only if'' part is clear.

For the ``if'' part, assume $\Omega$ is a vector satisfying $\Omega\perp \pi(B_\lambda)H_\phi$ for all~$\lambda>0$. Take $a\in A_\mu$ for some $\mu>0$. By assumption~(1), we can find a sequence $(a_n)_n$ in $\sum_{\lambda>0}B_\lambda A_{\mu-\lambda}$ converging to~$a$, which implies that $\pi_\phi(a_n)\xi_\phi\to\pi_\phi(a)\xi_\phi$. Since the vector $\xi_\phi$ is separating for $M$ by the KMS-condition, hence cyclic for $M'$, it follows that $\pi_\phi(a_n)\xi\to\pi_\phi(a)\xi$ for a dense set of vectors $\xi\in H_\phi$. As $\pi_\phi(a_n)\xi\in\sum_{\lambda>0}\pi_\phi(B_\lambda)H_\phi$ and hence $\Omega\perp \pi_\phi(a_n)\xi$, we conclude that $\Omega\perp\pi_\phi(a)\xi$ for a dense set of vectors $\xi$. But this is equivalent to $\pi_\phi(a)^*\Omega=0$. Hence $\Omega$ is indeed a vacuum vector. This proves~\eqref{eq:Q0}.

Next, we claim that for every $\lambda>0$ we have $Q_\lambda\in M_\phi$ and
\begin{equation}\label{eq:trace-Q-lambda}
\tau(Q_\lambda)=e^{-\beta\lambda}\dim_\tau B_\lambda.
\end{equation}
Indeed, we can find an approximate unit $(e_i)_i$ in $\overline{B_\lambda B_\lambda^*}\cong \K(B_\lambda)$ of the form
\[
e_i=\sum_{a\in J_i}aa^*=\sum_{a\in J_i}\theta_{a,a}
\]
for finite subsets $J_i\subset B_\lambda$. Then $\pi_\phi(e_i)\to Q_\lambda$ strongly, which implies that $Q_\lambda\in M_\phi$. By the KMS-condition we also have
\[
\tau(e_i)=\sum_{a\in J_i}\tau(aa^*)=e^{-\beta\lambda}\sum_{a\in J_i}\tau(a^*a)=e^{-\beta\lambda}\sum_{a\in J_i}\Tr^{B_\lambda}_\tau(\theta_{a,a})=e^{-\beta\lambda}\Tr^{B_\lambda}_\tau(e_i),
\]
cf.~\cite{MR2056837}, which in the limit gives~\eqref{eq:trace-Q-lambda}.

Now, since $\tau$ is a normal trace on $M_\phi$, we have $\tau(p\vee q)\le\tau(p)+\tau(q)$ for any projections $p,q\in M_\phi$. Hence, combining \eqref{eq:Q0} and \eqref{eq:trace-Q-lambda} and using assumption (2), we get
\[
\tau(Q_0)\ge 1-\sum_{\lambda>0}e^{-\beta\lambda}\dim_\tau B_\lambda>0,
\]
proving that $Q_0\ne0$.
\ep

\begin{remark}
The assumptions of the theorem can be slightly relaxed as follows, although for now we do not see much benefit in doing this. First, condition (2) is needed only for tracial states on $A_0$ that are restrictions of KMS$_\beta$-states. Such traces have additional properties that might be useful in estimating dimensions. For example, we must have $\tau(aa^*)=e^{-\beta\lambda}\tau(a^*a)$ for all $a\in A_\lambda$. Second, from the proof of the theorem we see that instead of (1) it suffices to require $\sum_\lambda B_\lambda A_{\mu-\lambda}$ to be dense in $A_\mu$ with respect to every semi-norm $\lVert\stub\rVert_\tau$ on $A_\mu$ defined by
\begin{equation}\label{eq:semi-tau}
\|a\|_\tau=\tau(a^*a)^{1/2}.
\end{equation}
Note that if $(H_\tau,\pi_\tau,\xi_\tau)$ is the GNS-triple associated with $\tau$, then this is the semi-norm obtained on $A_\mu$ using the map $A_\mu\to A_\mu\otimes_{A_0}H_\tau$, $a\mapsto a\otimes\xi_\tau$.\ee
\end{remark}

It is an interesting problem to characterize the smallest number $\beta_c$ above which all KMS$_\beta$-states are of finite type, cf.~\cite{MR4150892}*{Theorem~7.1}.
The number $\beta_0$ given by Theorem~\ref{thm:KMS-full} depends on the choice of $B_\lambda$ and provides only a very rough upper bound for $\beta_c$.

\smallskip

When the trace state space of $A_0$ is complicated, the inequality in Lemma~\ref{lem:dimension}(2) might be the only viable way to check the conditions of Theorem~\ref{thm:KMS-full}. A possible strategy to verify these conditions is therefore to look for a smallest set $\bigcup_{\lambda>0} S_\lambda$, $S_\lambda\subset A_\lambda$, such that $\sum_{\lambda>0}S_\lambda A_{\mu-\lambda}$ is dense in $A_\mu$ for all $\mu>0$, and then let $B_\lambda=\overline{S_\lambda A_0}$.

\begin{example}
Consider again a Pimsner--Toeplitz algebra $A=\TT_Y$ with a dynamics $\sigma=\sigma^H$ as in Examples~\ref{ex:TP}, \ref{ex:TP2}. For $\lambda>0$, let $B_\lambda$ be the closed right $A_0$-module generated by the elements $S_\xi$ with $\xi\in Y_\lambda$. Since the elements $S_{\xi_1}\dots S_{\xi_n}S^*_{\zeta_1}\dots S^*_{\zeta_m}$ span a dense subspace of $\TT_Y$, condition (1) in Theorem~\ref{thm:KMS-full} is obviously satisfied. As for condition (2), note that we have an isomorphism
\[
Y_\lambda\otimes_B A_0\cong B_\lambda, \quad \xi\otimes a\mapsto S_\xi a.
\]
This implies that for every tracial state $\tau$ on $A_0$ we have
\[
\dim_\tau B_\lambda=\dim_{\tau|_B}Y_\lambda.
\]
It follows that condition (2) is satisfied when
\[
\Tr^Y_\tau(e^{-\beta H})=\sum_{\lambda>0}e^{-\beta\lambda}\dim_\tau Y_\lambda<1
\]
for all tracial states $\tau$ on $B$. Theorem~\ref{thm:KMS-full} asserts that then all KMS$_\beta$-states are of finite type, that is, by Example~\ref{ex:TP2}, they have the form $\Tr_\tau^{\F_Y}(\stub e^{-\beta D})$ for a positive trace $\tau$ on $B$, where $D=0\oplus H\oplus (H\otimes1+1\otimes H)\oplus\dots$.

This is consistent with results in \cite{MR2056837}*{Section~2}, where it is shown (under the assumption $\overline{BY}=Y$) that a $\sigma^H$-KMS$_\beta$-state of infinite type on $\TT_Y$ exists if and only if $\Tr^Y_\tau(\stub e^{-\beta H})=\tau$ on $B$ for some tracial state $\tau$.

Note that this example shows that condition (2) in Theorem~\ref{thm:KMS-full} cannot be relaxed to
\[
\sum_{\lambda>0}e^{-\beta\lambda}\dim_\tau B_\lambda<\infty,
\]
as might be suggested by~\cite{MR4150892}*{Theorem~7.1} or~\cite{NS}*{Theorem~4.5}.
\end{example}

\begin{example}\label{ex:LCM}
We next look at KMS-states on C$^*$-algebras of right LCM semigroups. This class of algebras have over years received significant attention, see, e.g., \cites{MR2671177,MR3932591} and references there. A general formula for KMS$_\beta$-states on these algebras for large $\beta$ has been obtained in~\cite{NS} using the groupoid approach. As we will see, it also arises naturally from the results we have proved.

So let $S$ be a right LCM semigroup, by which we mean a left cancellative monoid with identity element~$e$ such that for all $s,t\in S$ we have either $sS\cap tS=\emptyset$ or $sS\cap tS=rS$ for some $r\in S$. 
The semigroup C$^*$-algebra $C^*(S)$ is generated by isometries $v_s$, $s\in S$, satisfying the relations
\[
v_e=1,\quad v_sv_t=v_{st},\quad \text{and}\quad v_sv_s^*v_tv_t^*=
\begin{cases}
v_rv_r^*,&\text{if $sS\cap tS=rS$},\\
0,&\text{if $sS\cap tS=\emptyset$}.
\end{cases}
\]
The elements $v_sv_t^*$, for all $s,t\in S$, span a dense subspace of $C^*(S)$.

Consider the dynamics $\sigma^N$ on $C^*(S)$ defined by a semigroup homomorphism $N\colon S\to([1,+\infty),\cdot)$:
\[
\sigma^N_t(v_s)=N(s)^{it}v_s.
\]
We assume that $N$ satisfies the following condition:
\begin{equation}\label{eq:scale-condition}
\text{for any}\quad s\in S,\ t\in\ker N,\quad \text{we have}\quad sS\cap tS=rS\quad \text{for some} \quad r\in s(\ker N),
\end{equation}
which by~\cite{NS}*{Lemma~3.1} is necessary for existence of a KMS$_\beta$-state for some $\beta>0$. This implies that $\ker N$ is a hereditary LCM submonoid of $S$.

Since the elements $v_sv_t^*$ span a dense subalgebra of $A=C^*(S)$, the spectral space $A_\lambda$ is the closure of the linear span of $v_sv_t^*$ with $\log N(s)-\log N(t)=\lambda$. Then the ideal $I$ is the closure of the linear span of $v_sv_t^*$ with $N(s)=N(t)>1$.

By~\cite{NS}*{Corollary~4.4}, we can view $C^*(\ker N)$ as a subalgebra of $C^*(S)$ and there is a conditional expectation $E\colon C^*(S)\to C^*(\ker N)$ that kills $v_sv_t^*$ if either $s$ or $t$ is not in $\ker N$. This implies that
\[
A_0=C^*(\ker N)\oplus I
\]
as a vector space, hence the crystal of $(C^*(S),\sigma^N)$ is $C^*(\ker N)$.

The group $\Gamma\subset\R$ is generated by $\log N(S)$. Since $v_tv_t^*\in I$ for $N(t)>1$, the image of $v_sv_t^*$ in the Fock module $\F$ is zero for every such $t$. Therefore $\Gamma_+=\log N(S)$, which is in general different from $\Gamma\cap\R_+$. Furthermore, the right $A_c$-modules $X_\lambda$ are topologically generated by the images of elements $v_s$ with $\log N(s)=\lambda$.

To apply Theorem~\ref{thm:KMS-full}, for $\lambda\notin\log N(S)$ we take $B_\lambda=0$. For $\lambda\in\log N(S)$, $\lambda>0$, we take $B_\lambda$ to be the closed linear span of $v_sA_0$ for all $s$ with $\log N(s)=\lambda$. Condition (1) in Theorem~\ref{thm:KMS-full} is obviously satisfied. As for (2), by Lemma~\ref{lem:dimension}(2) we have $\dim_\tau B_\lambda\le|N^{-1}(e^\lambda)|$, but a more careful analysis shows that we can do better than this. Namely, observe that the elements $v_t$ for $t\in\ker N$ become unitary in the GNS-representation defined by $\tau$, hence $\|v_{st}v_t^*-v_s\|_\tau=0$ for every $s$, where $\|\stub\|_\tau$ is defined by~\eqref{eq:semi-tau}. As in~\cite{NS}, define an equivalence relation $\sim_N$ on $S$ by
\[
s\sim_N t\quad\text{if and only if}\quad sa=tb\quad\text{for some}\quad a,b\in\ker N.
\]
Then we see that the right $A_0$-module $\tilde B_\lambda$ generated by representatives of the equivalence classes in $N^{-1}(e^\lambda)$ is dense in $B_\lambda$ with respect to the semi-norm $\lVert\stub\rVert_\tau$. By Lemma~\ref{lem:dimension} we can therefore conclude that
\[
\dim_\tau B_\lambda=\dim_\tau \tilde B_\lambda\le|N^{-1}(e^\lambda)/{\sim_N}|.
\]

Consider the $\zeta$-function of $N$ defined by $\zeta_N(\beta)=\sum_{s\in S/{\sim_N}}N(s)^{-\beta}$. Then by Theorem~\ref{thm:KMS-full} we get that
if $\beta>0$ is such that
\[
\zeta_N(\beta)-1=\smashoperator[r]{\sum_{s\in (S\setminus\ker N)/{\sim_N}}} \ N(s)^{-\beta}<1,
\]
then we have a one-to-one correspondence between the KMS$_\beta$-states on $C^*(S)$ and the tracial states on~$C^*(\ker N)$. By~\cite{NS}*{Theorem~4.5} the same is actually true for all $\beta>0$ such that $\zeta_N(\beta)<\infty$.

Furthermore, for $\beta>0$ such that $\zeta_N(\beta)<\infty$ the KMS$_\beta$-state $\Tr^\F_\tau(e^{-\beta D})^{-1}\Tr^\F_\tau(\Lambda(\stub)e^{-\beta D})$ corresponding to a tracial state $\tau$ on $C^*(\ker N)$ can be explicitly computed as follows. As $C^*(\ker N)=A_c$ is a quotient of $A_0$, we can view $\tau$ as a state on $A_0$. Then $\tau$ is zero on $I$ and a short computation reveals that the spaces $v_sA_0\otimes_{A_0}H_\tau=v_sC^*(\ker N)\otimes_{C^*(\ker N)}H_\tau$ are mutually orthogonal for $s$ lying in different equivalence classes. It follows that if $r_i$ are representatives of the equivalence classes in $N^{-1}(e^\lambda)$ and $\xi_i$ are the images of~$v_{r_i}$ in~$X_\lambda$, then
\[
\sum_i\Tr_\tau^{X_\lambda}(\theta_{\xi,\zeta}\theta_{\xi_i,\xi_i})=\sum_i\tau(\langle\xi_i\langle\xi_i,\zeta\rangle,\xi\rangle)=\tau(\langle\zeta,\xi\rangle)
=\Tr_\tau^{X_\lambda}(\theta_{\xi,\zeta})
\]
for all $\xi,\zeta\in\bigcup_j\xi_j C^*(\ker N)$, hence for all $\xi\in X_\lambda$. We conclude that
\[
\Tr_\tau^{X_\lambda}(\Lambda(a))=\sum_i\Tr_\tau^{X_\lambda}(\Lambda(a)\theta_{\xi_i,\xi_i})=\smashoperator[r]{\sum_{r\in N^{-1}(e^\lambda)/\sim_N}} \ \tau(v_r^*av_r)\quad\text{for all}\quad a\in A_0.
\]
From this we get $\Tr^\F_\tau(e^{-\beta D})=\zeta_N(\beta)$ and, after a short computation, that
\[
\Tr^\F_\tau(\Lambda(v_sv_t^*)e^{-\beta D})=\smashoperator[r]{\sum_{r\in S/{\sim_N}\colon sr\sim_N tr}} \ N(sr)^{-\beta}\tau(v_{q_r}^{\phantom{*}}v_{p_r}^*),
\]
where $p_r,q_r\in\ker N$ are such that $srp_r=trq_r$. This gives exactly the expression for the $\sigma^N$-KMS$_\beta$-states on $C^*(S)$ obtained in~\cite{NS}*{Theorem~4.5}.
\end{example}

\bigskip

\section{K-theory for circle actions}

\subsection{Functoriality problems}\label{ssec:funct}
In this section we begin discussing the problem of relating the K-theory of a C$^*$-algebra to the K-theory of its crystal. But first we want to give a couple of examples to curb our expectations.

\begin{example}
Let us consider again a Pimsner--Toeplitz algebra $A=\TT_Y$ as in Example~\ref{ex:TP}. Consider the gauge action $\gamma$ on $\TT_Y$. In this case we know that, on the one hand, the crystal of $(\TT_Y,\gamma)$ is the coefficient algebra $B$, while on the other hand, the embedding map $B\to\TT_Y$ is a KK-equivalence by~\cite{MR1426840}, so we see that the crystal has the same K-theory as the algebra. At a first glance it seems that this is the perfect situation we hoped for, but a closer look reveals that even in this case there are some problems.

The issue is that the embedding map $B\to\TT_Y$ is given to us by the construction of the algebra~$\TT_Y$, but it has nothing to do with the crystal and in fact cannot be reconstructed from the pair $(\TT_Y,\gamma)$. This is a consequence of the following example given in~\cite{MR4153628}.

Consider the two directed $6$-vertex graphs $E$ and $F$ as in Figure~\ref{fig:ctexmpl-grphs}, that differ only in the range of the edge $e$.
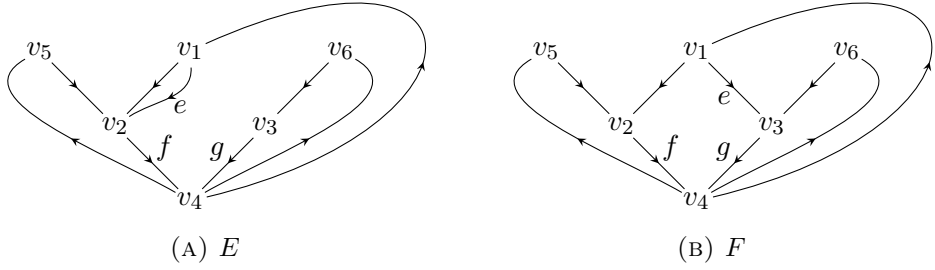
\begin{figure}[ht]
\begin{subfigure}[b]{0.4\linewidth}
\begin{tikzpicture}
    \node[circle, inner sep=0.2pt] (a) at (0,0) {$v_4$};
    \node[circle, inner sep=0.2pt] (b) at (-1,1) {$v_2$};
    \node[circle, inner sep=0.2pt] (c) at (1,1) {$v_3$};
    \node[circle, inner sep=0.2pt] (d) at (-2,2) {$v_5$};
    \node[circle, inner sep=0.2pt] (e) at (0,2) {$v_1$};
    \node[circle, inner sep=0.2pt] (f) at (2,2) {$v_6$};
    \draw[->-] (b)--(a) node [pos=0.5, anchor=south west, inner sep=1pt] {$f$};
    \draw[->-] (c)--(a) node [pos=0.5, anchor=south east, inner sep=1pt] {$g$};
    \draw[->-] (d)--(b);
    \draw[->-] (e) to (b);
    \draw[->-, in=33, out= 273] (e) to node [pos=0.5, anchor=north west, inner sep=1pt] {$e$} (b);
    \draw[->-] (f)--(c);
    \draw[->-, in=210, out=150] (a) to (d);
    \draw[->-] (a)  .. controls +(4.2, 1) and +(4,1.8) .. (e);
    \draw[->-, in=330, out=30] (a) to (f);
\end{tikzpicture}
\caption{$E$}
\end{subfigure}
\begin{subfigure}[b]{0.4\linewidth}
\begin{tikzpicture}
    \node[circle, inner sep=0.2pt] (a) at (0,0) {$v_4$};
    \node[circle, inner sep=0.2pt] (b) at (-1,1) {$v_2$};
    \node[circle, inner sep=0.2pt] (c) at (1,1) {$v_3$};
    \node[circle, inner sep=0.2pt] (d) at (-2,2) {$v_5$};
    \node[circle, inner sep=0.2pt] (e) at (0,2) {$v_1$};
    \node[circle, inner sep=0.2pt] (f) at (2,2) {$v_6$};
    \draw[->-] (b)--(a) node [pos=0.5, anchor=south west, inner sep=1pt] {$f$};
    \draw[->-] (c)--(a) node [pos=0.5, anchor=south east, inner sep=1pt] {$g$};
    \draw[->-] (d)--(b);
    \draw[->-] (e) to (b);
    \draw[->-] (e) to node [pos=0.5, anchor=north east, inner sep=1pt] {$e$} (c);
    \draw[->-] (f)--(c);
    \draw[->-, in=210, out=150] (a) to (d);
    \draw[->-] (a)  .. controls +(4.2, 1) and +(4,1.8) .. (e);
    \draw[->-, in=330, out=30] (a) to (f);
\end{tikzpicture}
\caption{$F$}
\end{subfigure}
\caption{Nonisomorphic graphs with  equivariantly isomorphic C*-algebras.}
\label{fig:ctexmpl-grphs}
\end{figure}
Consider the corresponding Cuntz--Krieger--Toeplitz algebras $\TT C^*(E)$ and $\TT C^*(F)$. Recall that they can be described as Pimsner--Toeplitz algebras as follows. The coefficient algebra is $B=\C^6$, the algebra of functions on the vertices. The C$^*$-correspondences $Y_E$ and $Y_F$ for both graphs are the spaces of functions on the edges, with the  bimodule structure and inner product given by
\[
(a\cdot\xi\cdot b)(h)=a(r(h))b(s(h))\xi(h),\quad \langle \xi,\zeta\rangle(v)=\sum_{h\in s^{-1}(v)} \overline{\xi(h)}\zeta(h)
\]
for all $a,b\in B$ and every vertex $v$ and edge $h$ in the respective graph, where $r$ and $s$ denote the range and the source maps.

For every vertex $v_i$, denote by $q_i\in B$ the delta-function at $v_i$. For every edge $h$, denote by $t_h$ the generator $S_\xi$ of the respective Pimsner--Toeplitz algebra, with $\xi$ equal to the delta-function at~$h$.  As is shown in~\cite{MR4153628}, we have an isomorphism $\pi\colon \TT C^*(E)\to \TT C^*(F)$ such that
\begin{equation}\label{eq:graphs}
q_2\mapsto q_2+t_et_e^*,\quad q_3\mapsto q_3-t_et_e^*,\quad t_f\mapsto t_f+t_gt_et_e^*,\quad t_g\mapsto t_g-t_gt_et_e^*
\end{equation}
and $q_i\mapsto q_i$ for $i\ne2,3$, $t_h\mapsto t_h$ for $h\ne f,g$. The isomorphism is obviously equivariant with respect to the gauge actions, so $(\TT C^*(E),\gamma)\cong (\TT C^*(F),\gamma)$, but the graphs $E$ and $F$ are not isomorphic. Note that this does not contradict our Example~\ref{ex:TP2}, where we showed that from $(\TT_Y,\gamma)$ we can reconstruct the right $B$-module $Y$ and the $\TT_Y$-$B$-correspondence $\F_Y$, but never claimed that we can recover the left action of $B$ on $Y$.

Now, from \eqref{eq:graphs} we see that $\pi\colon \TT C^*(E)\to \TT C^*(F)$ induces the identity map $B\to B$ of the crystals. On the other hand, if we identify the K-theory of both algebras with $K_0(B)=\Z^6$ using the embeddings $B\to \TT C^*(E)$ and $B\to \TT C^*(F)$, then we see that $\pi$ induces a nontrivial map on~$\Z^6$, namely, the one given by the matrix
\[
\begin{pmatrix}
1 & 1& -1 & & & \\
 &  1&  & & & \\
 & &  1& & & \\
 & & & 1& & \\
 & & & & 1& \\
 & & & & & 1\\
\end{pmatrix},
\]
since $t_e^*t_e=q_1$ both in $\TT C^*(E)$ and $\TT C^*(F)$.

We can view $B$ itself as a C$^*$-algebra with trivial dynamics and crystal $B$, so that the embeddings $B\to \TT C^*(E)$ and $B\to \TT C^*(F)$ induce the identity maps on the crystals. The conclusion is that even if we consider only the Pimsner--Toeplitz algebras and gauge actions, there is no natural transformation, in either direction, between the functors
\begin{equation}\label{eq:two-functors}
(A,\sigma)\mapsto K_*(A),\quad (A,\sigma)\mapsto K_*(A_c)
\end{equation}
such that it gives the identity map when $\sigma$ is trivial.
\end{example}

\begin{example}
Consider the CAR-algebra $A=\CAR(H)$ over a separable Hilbert space $H$ as in Example~\ref{ex:CAR}, and let $\gamma$ be the gauge action. As we know, then $A_c\cong\C$.
We have
\[
K_0(A)\cong
\begin{cases}
\Z[1/2],&\text{if $\dim H=\infty$},\\
\Z,&\text{if $\dim H<\infty$},
\end{cases}
\]
and $K_0(A_c)\cong\Z$, so the K-groups of $A$ and $A_c$ are rationally isomorphic.

For finite dimensional Hilbert spaces $H$ we even have isomorphic K-groups, but these isomorphisms cannot be natural. Indeed, if $H'\subset H$ is a subspace of codimension one, then the embedding map $i\colon \CAR(H')\to \CAR(H)$ induces an isomorphism $i_c$ of the corresponding crystals, while the map $i_*\colon\Z\cong K_0(\CAR(H'))\to\Z\cong K_0(\CAR(H))$ is the multiplication by $2$. Therefore there is no natural transformation, in either direction, between the functors~\eqref{eq:two-functors} such that it gives an isomorphism for the algebras $\CAR(H)$ with $\dim H<\infty$. \ee
\end{example}

The conclusions we may draw from these two examples are that we should probably not hope to find a canonical way of relating the K-theory of an algebra to that of its crystal that works in great generality, and that it might be better to consider the rational K-theory.

\subsection{Semi-saturated actions}\label{ssec:semis}
Assume now that we have a circle action $\sigma$ on a separable C$^*$-algebra $A$.
We want to compare $K_*(A)$ with $K_*(A_c)$. We will make two additional assumptions to simplify matters:
\begin{equation}\label{eq:semi-saturated}
A=C^*(A_0,A_1)
\end{equation}
and
\begin{equation}\label{eq:A1-full}
I_{-1}=\overline{A_1^*A_1}=A_0.
\end{equation}

In the terminology of~\cite{MR1276163} the first condition means that we consider a semi-saturated circle action. By~\cite{MR1276163}*{Proposition~4.8} it is equivalent to the condition that $A_n$ is the closure of $A_1^n$ for all $n\ge1$. This implies that $I_1\supset I_2\supset...$, hence
\[
I=I_1\quad\text{and}\quad A_c=A_0/I_1.
\]

Let us introduce the following notation. Assume $B$ and $C$ are C$^*$-algebras and $Y$ is a $B$-$C$ C$^*$-correspondence with the left action of $B$ given by a $*$-homomorphism $B\to\K(Y)$. In this case $Y$ can be viewed as an even Kasparov $B$-$C$-module with zero odd part. We denote the corresponding class by $[Y]\in KK(B,C)$.

By condition~\eqref{eq:A1-full} we therefore have a well-defined class $[A_{-1}]\in KK(A_0,A_0)$. This class defines an endomorphism of $K_*(A_0)=KK_*(\C,A_0)$, $x\mapsto x\otimes_{A_0}[A_{-1}]$, which we denote by $\beta$. This way $K_*(A_0)$ can be viewed as a $\Z[t]$-module, with $t$ acting by $\beta$.

\begin{theorem}\label{thm:K-theory-circle}
Assume $\sigma$ is a semi-saturated circle action on a separable C$^*$-algebra $A$ satisfying condition~\eqref{eq:A1-full}. Assume in addition that
\begin{enumerate}
\item the $\Q[t]$-module $K_*(A_0)\otimes_\Z\Q$ is finitely generated;
\item the action of $t$ on $K_*(A_0)\otimes_\Z\Q$ has zero kernel and no nonzero fixed points.
\end{enumerate}
Then $K_*(A)\otimes_\Z\Q\cong K_*(A_c)\otimes_\Z\Q$.
\end{theorem}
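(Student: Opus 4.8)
Write $K_*(\,\cdot\,)_\Q=K_*(\,\cdot\,)\otimes_\Z\Q$; the plan is to express both $K_*(A)_\Q$ and $K_*(A_c)_\Q$ through the single endomorphism $\beta$ of $K_*(A_0)_\Q$ and then to compare them via the $\Q[t]$-module structure of assumption~(1). The first step is to analyse the ideal $I_1$. Regarding $A_1$ as a right Hilbert $A_0$-module with $\langle a,b\rangle=a^*b$, the assignment $\theta_{a,b}\mapsto ab^*$ identifies $\K(A_1)$ with $I_1=\overline{A_1A_1^*}$, and condition~\eqref{eq:A1-full} says that $A_1$ is full, so $A_1$ becomes an $I_1$--$A_0$ imprimitivity bimodule and $I_1$ is Morita equivalent to $A_0$, yielding an isomorphism $\mu\colon K_*(A_0)\xrightarrow{\ \sim\ }K_*(I_1)$. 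The crucial point is that the map induced on K-theory by the inclusion $I_1\hookrightarrow A_0$, composed with $\mu$, is exactly $\beta$: the composite is computed by the $A_0$--$A_0$ correspondence $A_{-1}\otimes_{I_1}A_0$, and multiplication gives an isomorphism $A_{-1}\otimes_{I_1}A_0\cong A_{-1}$, whose class is $[A_{-1}]$.

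With this in hand I would run two six-term sequences. Applied to $0\to I_1\to A_0\to A_c\to0$, after identifying $K_*(I_1)$ with $K_*(A_0)$ via $\mu$ and tensoring with $\Q$, the sequence becomes $K_*(A_0)_\Q\xrightarrow{\ \beta\ }K_*(A_0)_\Q\to K_*(A_c)_\Q\to$, and since $\beta$ is injective by assumption~(2) the connecting maps vanish, so $K_*(A_c)_\Q\cong\operatorname{coker}\beta$. On the other side, Exel's generalized Pimsner--Voiculescu sequence for the semi-saturated action~\cite{MR1276163}, together with $I_{-1}=A_0$ and the same Morita identification, reads $K_*(A_0)_\Q\xrightarrow{\ 1-\beta\ }K_*(A_0)_\Q\to K_*(A)_\Q\to$: the inclusion $I_{-1}=A_0\hookrightarrow A_0$ contributes the identity, so the map is $1-\beta$. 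Assumption~(2) also says $\beta$ has no nonzero fixed points, hence $1-\beta$ is injective, the connecting maps again vanish, and $K_*(A)_\Q\cong\operatorname{coker}(1-\beta)$.

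It remains to compare $\operatorname{coker}\beta$ with $\operatorname{coker}(1-\beta)$. By assumption~(1) I may view $K_*(A_0)_\Q$ as a finitely generated module over the principal ideal domain $\Q[t]$ with $t$ acting by $\beta$, and decompose it into a free part and cyclic torsion summands $\Q[t]/(p^k)$. Assumption~(2) forces $p(0)\neq0$ and $p(1)\neq0$ on every torsion summand (otherwise $\beta$ or $1-\beta$ would acquire a nonzero kernel), so on the torsion part both $t$ and $1-t$ act invertibly and contribute nothing to either cokernel, whereas each free summand contributes a one-dimensional cokernel to both. Thus $\operatorname{coker}\beta$ and $\operatorname{coker}(1-\beta)$ have equal dimension in each parity, giving $K_*(A)_\Q\cong K_*(A_c)_\Q$. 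The step I expect to be the main obstacle is the bookkeeping in the previous paragraph: threading the Morita equivalence implemented by $A_1$ through the Kasparov products so that the inclusion map is precisely $\beta$ and Exel's map is precisely $1-\beta$. What makes the comparison possible at all is exactly that one and the same class $[A_{-1}]$ governs both six-term sequences, so the two cokernels live on the same module.
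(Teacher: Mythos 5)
Your proposal is correct and follows essentially the same route as the paper: the paper likewise identifies $\beta$ as the Morita isomorphism $K_*(A_0)\cong K_*(I_1)$ composed with $\iota_*$, runs the six-term sequence of $0\to I_1\to A_0\to A_c\to0$ against a Pimsner--Voiculescu-type sequence for the semi-saturated action (obtained either as in \cite{MR1426840} for the non-full correspondence $A_{-1}$ or via \cite{MR1276163}*{Theorems~4.21 and~7.1}, which is your choice), uses assumption (2) to kill the rational boundary maps, and then compares $M/tM$ with $M/(1-t)M$ by decomposing the finitely generated $\Q[t]$-module $M=K_*(A_0)\otimes_\Z\Q$ into cyclic summands exactly as you do. The ``bookkeeping'' step you flag is also left at the same level of detail in the paper, so there is no substantive gap.
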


Before we turn to the proof, let us discuss how reasonable it is to expect condition (2) to be satisfied.

As $A_{-1}\cong A_{-1}\otimes_{I_1}A_0$, the class $[A_{-1}]\in KK(A_0,A_0)$ is the composition of $[{}_{A_0}{A_{-1}}_{I_1}]\in KK(A_0,I_1)$ and the class $[\iota]\in KK(I_1,A_0)$ of the embedding $\iota\colon I_1\to A_0$. Since ${}_{A_0}{A_{-1}}_{I_1}$ is an $A_0$-$I_1$-imprimitivity bimodule, we thus see that $\beta$ is the composition of an isomorphism $K_*(A_0)\cong K_*(I_1)$ with  $\iota_*\colon K_*(I_1)\to K_*(A_0)$. Therefore the action of $t$  on $K_*(A_0)\otimes_\Z\Q$ has zero kernel if and only if $\iota_*$ is rationally injective. This is, for example, the case when $I_1$ is unital, as then $I_1$ is a direct summand of $A_0$. Note that if $A_0$ is unital (equivalently, $A$ is unital), then $I_1\cong\K(A_1)$ is unital if and only if $A_1$ is finitely generated as a right $A_0$-module.

Next, it is immediate that $[A_{-1}]^n=[A_{-n}]$. If we denote by $\iota_n$ the embedding map $I_n\to A_0$, then arguing in the same way as in the case $n=1$ above we conclude that
\[
\beta^n(K_*(A_0))=\iota_{n*}(K_*(I_n)),
\]
and therefore
\begin{equation}\label{eq:intersection-zero0}
\bigcap^\infty_{n=0}t^n(K_*(A)\otimes_\Z\Q)=\bigcap^\infty_{n=0}\iota_{n*}(K_*(I_n))\otimes_\Z\Q.
\end{equation}
It follows that the requirement that $t$ has no nonzero fixed points in $K_*(A_0)\otimes_\Z\Q$ is weaker than the condition
\begin{equation}\label{eq:intersection-zero}
\bigcap^\infty_{n=0}\iota_{n*}(K_*(I_n))\otimes_\Z\Q=0.
\end{equation}

\begin{lemma}\label{lem:Lambda-faithful}
For every semi-saturated circle action, the homomorphism $\Lambda\colon A\to\LL(\F)$ is faithful if and only if $\bigcap^\infty_{n=1}I_n=0$.
\end{lemma}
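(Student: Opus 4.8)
The map $\Lambda\colon A\to\LL(\F)$ is a $*$-homomorphism, so it is faithful if and only if its kernel is zero; equivalently, it is faithful if and only if $\Lambda$ is injective on the positive cone, and by continuity it suffices to understand when $\Lambda$ annihilates a positive element. The plan is to identify the kernel of $\Lambda$ explicitly. Since $\F=\bigoplus_{\lambda\in\Gamma_+}X_\lambda$ with $X_0=A_c=A_0/I$ and $X_n$ (for a semi-saturated action $\Gamma_+=\Z_{\ge 0}$) is a quotient of $A_n=\overline{A_1^n}$, I would first observe that $\Lambda$ restricted to $A_0$ acts on each summand $X_n$, and that the summand $X_0$ already records the quotient $A_0\to A_c$. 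The key point is to compute, for $a\in A_0$, when $\Lambda(a)$ kills every summand.

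**Computing the action on the Fock summands.**
For the semi-saturated case, $X_n=A_n/\overline{A_n I}$ and the left action of $A_0$ on $X_n$ factors through the left action on $A_n$. I would argue that $a\in A_0$ acts as zero on $X_n$ precisely when $a A_n\subset\overline{A_n I}$, and then relate this to the ideal structure: recall $I=I_1$ and $I_n=\overline{A_n A_n^*}$, with $A_n\cong A_n\otimes_{I_n}A_0$ as an $A_0$--$A_0$ correspondence whose left action lands in $\K(A_n)\cong I_n$. The crucial identification is that the kernel of the left action of $A_0$ on $X_n$ is an ideal closely tied to $I_n$; iterating over all $n$, a positive element $a\in A_0$ lies in $\ker\Lambda|_{A_0}$ iff $a$ maps to zero in every $X_n$, which I expect to pin down as $a\in\bigcap_{n\ge 1} I_n$ together with $a\in I$ (the $n=0$ condition). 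Since $I=I_1\supset I_n$ for all $n$, the condition on $X_0$ is subsumed, and the kernel of $\Lambda|_{A_0}$ should come out to be exactly $\bigcap_{n=1}^\infty I_n$.

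**From $A_0$ to all of $A$.**
Having determined $\ker\Lambda|_{A_0}=\bigcap_{n\ge1}I_n$, I would promote this to a statement about all of $A$ using the conditional expectation $E\colon A\to A_0$ and the fact that $\Lambda$ is equivariant: the unitaries $U_t=e^{itD}$ implement $\sigma$ on $\F$, so $\ker\Lambda$ is a $\sigma$-invariant ideal. For a $\sigma$-KMS/invariant-style argument, faithfulness of $\Lambda$ on $A$ reduces to faithfulness on $A_0$ via the standard fact that an equivariant representation is faithful iff it is faithful on the fixed-point algebra — more precisely, for $a\in A$, if $\Lambda(a)=0$ then $\Lambda(a^*a)=0$ and $E(a^*a)\in A_0$ is detected by the vacuum compression $Q_0\Lambda(\stub)Q_0$, forcing $E(a^*a)\in\bigcap_n I_n$. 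One then uses faithfulness of $E$ (which holds since $E$ is the conditional expectation onto $A_0$) to conclude $a=0$ exactly when $\bigcap_{n\ge1}I_n=0$. Thus $\Lambda$ is faithful iff $\bigcap_{n=1}^\infty I_n=0$.

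**The main obstacle.**
The delicate step is the precise computation of $\ker\Lambda|_{A_0}=\bigcap_{n\ge1}I_n$, specifically showing that an element acting as zero on every $X_n$ must lie in $I_n$ for all $n$, and conversely. The forward direction requires understanding $\overline{A_n I}$ well enough to see that $aA_n\subset\overline{A_nI}$ for all $n$ forces $a\in I_n$; here the imprimitivity bimodule structure ${}_{A_0}(A_n)_{I_n}$ and the identification of the left action with a map $A_0\to\LL(A_n)$ whose restriction to $\K(A_n)\cong I_n$ is the quotient behavior should do the work, but getting the closures and the interplay between $I$ and $I_n$ exactly right is where the care is needed.
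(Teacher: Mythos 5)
Your overall architecture matches the paper's: compute $\ker\Lambda|_{A_0}$, identify it with $\bigcap_{n\ge1}I_n$, and then pass from $A_0$ to $A$ using equivariance of $\Lambda$ and the faithful conditional expectation $E$ (your invocation of the vacuum compression $Q_0\Lambda(\stub)Q_0$ only yields $E(a^*a)\in I_1$, not $E(a^*a)\in\bigcap_nI_n$; the clean route is $\Lambda(\sigma_t(a^*a))=U_t\Lambda(a^*a)U_t^*=0$, hence $\Lambda(E(a^*a))=0$, and then apply the $A_0$-computation --- but this is a repairable slip). The genuine gap is that the central computation is never carried out: you state it as "the main obstacle," say the imprimitivity bimodule structure "should do the work," and only "expect" to pin down the kernel as $\bigcap_nI_n$. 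That step is precisely the content of the lemma, so without it the proposal is a plan rather than a proof.

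Moreover, the route you sketch toward that step is subtly miscalibrated. You aim to show directly that $a\in A_0$ acting as zero on $X_n$ forces $a\in I_n$ (or $I_{n+1}$), and you treat the $X_0$-condition as subsumed by $I_1\supset I_n$. In fact, acting as zero on $X_n$ gives only $aA_n\subset\overline{A_nI_1}$, hence, using semi-saturatedness ($\overline{A_nA_1A_1^*A_n^*}=I_{n+1}$), the inclusion $aI_n\subset I_{n+1}$; this does \emph{not} by itself put $a$ in $I_{n+1}$. The paper's argument is inductive: one first uses the $X_0$-condition to get $a\in I_1$ (so the base case is essential, not redundant), and then proves the refined statement that for $a$ \emph{already in} $I_n$, acting as zero on $X_n$ is equivalent to $a\in I_{n+1}$ --- the forward direction via $a=\lim_i ae_i\in\overline{aI_n}\subset I_{n+1}$ for an approximate unit $(e_i)$ of $I_n$, the converse via $aA_n\subset\overline{A_{n+1}A_{n+1}^*A_n}=\overline{A_{n+1}A_1^*}=\overline{A_nI_1}$. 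With that induction in place the identification $\ker\Lambda|_{A_0}=\bigcap_nI_n$ follows, and the reduction from $A$ to $A_0$ finishes the proof as you intended. You should restructure your key step along these lines rather than hoping for an unconditional implication "zero on $X_n$ $\Rightarrow$ $a\in I_n$."
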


\bp
Observe first that, given $a\in I_n$ for some $n\ge1$, the action of $a$ on $X_n=A_n/\overline{A_n I_1}$ is zero if and only if $a\in I_{n+1}$. Indeed, if $a$ acts by zero, then $aA_n\subset \overline{A_n I_1}$, hence
\[
aA_nA_n^*\subset \overline{A_n I_1A_n^*}= \overline{A_n A_1A_1^*A_n^*}=I_{n+1},
\]
so that $aI_n\subset I_{n+1}$ and therefore $a\in I_{n+1}$. Conversely, if $a\in I_{n+1}$, then
\[
aA_n\subset\overline{A_{n+1}A_{n+1}^*A_n}=\overline{A_{n+1}A_1^*}=\overline{A_nI_1},
\]
so $a$ acts by zero.

It is also clear that $a\in A_0$ acts by zero on $X_0=A_0/I_1$ if and only if $a\in I_1$. These two observations already imply that $\Lambda$ is faithful on $A_0$ if and only if $\bigcap^\infty_{n=1}I_n=0$. But since $\Lambda$ is equivariant with respect to the circle action, it is faithful if and only if its restriction to $A_0$ is faithful.
\ep

The condition $\bigcap_n I_n=0$ does not imply \eqref{eq:intersection-zero} in general. But the lemma above does show that~\eqref{eq:intersection-zero}, and hence also condition (2) in Theorem~\ref{thm:K-theory-circle}, is reasonable to expect to be satisfied as long as $\Lambda$ is faithful, which is in turn a natural condition if we hope to recover information about~$A$ from~$A_c$.

\bp[Proof of Theorem~\ref{thm:K-theory-circle}]
Consider the $A_0$-$A_0$ C$^*$-correspondence $Y=A_{-1}$. Then we can identify $A$ with the Cuntz--Pimsner algebra $\tilde\OO_Y$ from~\cite{MR1426840}, where tilde is to remind that the right C$^*$-Hilbert $A_0$-module $A_{-1}$ is not full. For this C$^*$-algebra we still have a six-term exact sequence similar to~\cite{MR1426840}*{Theorem~4.9},
\begin{equation}\label{eq:6-term1}
\begin{tikzcd}
K_0(A_0) \ar[r, "\id-\beta"] & K_0(A_0)\ar[r] & K_0(A)\ar[d]\\
K_1(A)\ar[u]  & K_1(A_0) \ar[l] & K_1(A_0) \ar[l, "\id-\beta"].
\end{tikzcd}
\end{equation}
This can be proved in the same way as in the case of full modules. Alternatively, we can stabilize $A$, write it  using \cite{MR1276163}*{Theorem~4.21} as the covariance algebra of a partial automorphism $\theta\colon A_0\to I_1$ of $A_0$, and then apply \cite{MR1276163}*{Theorem~7.1}.

On the other hand, the six-term exact sequence in K-theory defined by the short exact sequence $0\to I_1\to A_0\to A_c\to0$ can be written as
\begin{equation}\label{eq:6-term2}
\begin{tikzcd}
K_0(A_0) \ar[r, "\beta"] & K_0(A_0)\ar[r] & K_0(A_c)\ar[d]\\
K_1(A_c)\ar[u]  & K_1(A_0) \ar[l] & K_1(A_0) \ar[l, "\beta"].
\end{tikzcd}
\end{equation}

Exact sequences \eqref{eq:6-term1} and \eqref{eq:6-term2} and assumption (2) imply that
\[
K_*(A)\otimes_\Z\Q\cong M/(1-t)M\quad\text{and}\quad K_*(A_c)\otimes_\Z\Q\cong M/tM,
\]
where $M=K_*(A_0)\otimes_\Z\Q$. By assumption (1), the $\Q[t]$-module $M$ is finitely generated, hence it is a finite direct sum of modules of the form $N=\Q[t]/(f)$, while assumption (2) means that $f(0),f(1)\ne0$ unless $f=0$. But then $N/(1-t)N\cong N/tN\cong\Q$ if $f=0$ and $N/(1-t)N=N/tN=0$ otherwise, completing the proof of the theorem.
\ep

\begin{example}
Consider once again a Pimsner--Toeplitz algebra $A=\TT_Y$ as in Example~\ref{ex:TP} together with the gauge action $\gamma$.
We assume that the coefficient algebra $B$ is separable and $Y$ is countably generated as a right C$^*$-Hilbert $B$-module.
In this case we know that $K_*(A)\cong K_*(B)=K_*(A_c)$, but let us see what the assumptions of Theorem~\ref{thm:K-theory-circle} mean in this case.

The gauge action is always semi-saturated. Condition~\eqref{eq:A1-full} is satisfied if $Y$ is full as a right C$^*$-Hilbert $B$-module.
The embedding $B\to A_0$ induces a homomorphism $K_*(B)\to K_*(A_0)$, which in turn extends to a $\Z[t]$-module map $K_*(B)[t]\to K_*(A_0)$. It is not very difficult to see that this is actually an isomorphism. Therefore condition (2) in Theorem~\ref{thm:K-theory-circle} is always satisfied, while (1) is satisfied if and only if the abelian group $K_*(B)$ has finite rank.
\end{example}

\begin{example}\label{ex:dynam}
Assume $X$ is a separable totally disconnected locally compact space, $T\colon X\to X$ is a homeomorphism, and $Y\subset X$ is a compact open subset such that
\[
TY\subset Y\qquad\text{and}\qquad \bigcup_{n\in\Z}T^nY=X.
\]
Let $\alpha_T$ be the automorphism of $C_0(X)$ defined by $T$, $\alpha_T(f)(x)=f(T^{-1}x)$, and let
\[
A=\un_Y(C_0(X)\rtimes_{\alpha_T}\Z)\un_Y.
\]
Consider the dynamics on $A$ defined by the dual action of $\T$:
\[
\sigma_t(fu^n)=e^{2\pi in}fu^n,
\]
where $u\in M(C_0(X)\rtimes_{\alpha_T}\Z)$ is the unitary implementing $\alpha_T$.

Equivalently, the algebra $A$ is the C$^*$-algebra of the reduction $\G$ of the transformation groupoid $\Z\ltimes X$ by $Y$, so
\[
\G=\{(n,y)\in \Z\times Y\colon T^ny\in Y\},
\]
and the dynamics is defined by the $1$-cocycle $c(n,y)=2\pi n$. The boundary set of this cocycle is $Z=Y\setminus TY$. Hence, by Proposition~\ref{prop:groupoid-crystal}, the crystal of $(A,\sigma)$ is
\[
A_c=C(Z).
\]
Conditions~\eqref{eq:semi-saturated} and~\eqref{eq:A1-full} are easily seen to be satisfied. It is also easy to see that
$I_n=C(Y)\un_{T^n Y}$, so that $\cap_{n\ge1}I_n=0$ if and only if the set
\[
Y_\infty=\bigcap_{n\ge0} T^nY
\]
has empty interior, or equivalently, $\cup_{n\ge0}T^nZ$ is dense in $Y$. Let us assume from now on that this condition is satisfied.
Denote by $T_\infty$ the homeomorphism $T|_{Y_\infty}$.

Let us try to understand when the assumptions of Theorem~\ref{thm:K-theory-circle} are satisfied. We have $A_0=C(Y)$, $K^0(Y)=K_0(C(Y))\cong C(Y;\Z)$, and the $\Z[t]$-module structure is defined so that $t$ acts by the restriction of $\alpha_T$ to $C(Y;\Z)\subset C(Y)$. Then $K^0(Y)\otimes_\Z\Q\cong C(Y;\Q)$, where $\Q$ is considered with the discrete topology. It is clear that the action of $t$ on $C(Y;\Q)$ has zero kernel and no fixed points. So the only nontrivial condition is the finite generation of $C(Y;\Q)$ as a $\Q[t]$-module.

We have $K^0(Y_\infty)=K_0(C(Y_\infty))\cong C(Y_\infty;\Q)$, and $C(Y_\infty;\Q)$ has a $\Q[t]$-module structure defined by $tf=f\circ T_\infty^{-1}$. As $T_\infty$ is invertible, we actually have a $\Q[t,t^{-1}]$-structure. We can identify $K^0(Y\setminus Y_\infty)=K_0(C_0(Y\setminus Y_\infty))$ with $K^0(Z)[t]$ as a $\Z[t]$-module. Then the short exact sequence in K-theory corresponding to the extension $0\to C_0(Y\setminus Y_\infty)\to C(Y)\to C(Y_\infty)\to0$ gives the following exact sequence of $\Z[t]$-modules:
\begin{equation}\label{eq:function-modules}
0\to C(Z;\Z)[t]\to C(Y;\Z)\to C(Y_\infty;\Z)\to0.
\end{equation}
(The exactness is of course also easy to see directly.) This implies that the $\Q[t]$-module $C(Y;\Q)$ is finitely generated if and only if the module $C(Y_\infty;\Q)$ is finitely generated and the space $C(Z;\Q)$ is finite dimensional. The last condition is equivalent to finiteness of $Z$.

We claim that the $\Q[t]$-module $C(Y_\infty;\Q)$ is finitely generated if and only if $Y_\infty$ is finite. The ``if'' direction is obvious. Assume now that the module $C(Y_\infty;\Q)$ is finitely generated. Then it is a finite direct sum of cyclic modules $\Q[t]/(f)$. Since the action of $t$ on $C(Y_\infty;\Q)$ is invertible, we cannot have $f=0$. This implies that $C(Y_\infty;\Q)$ is actually a finite dimensional space over $\Q$, hence~$Y_\infty$ is finite.

To summarize, if $\cup_{n\ge0}T^nZ$ is dense in $Y$, then the conditions of  Theorem~\ref{thm:K-theory-circle} are satisfied if and only if the sets $Z$ and $Y_\infty$ are finite, and then the theorem states that $K_0(A)\otimes_\Z\Q\cong K_0(A_c)\otimes_\Z\Q= K^0(Z)\otimes_\Z\Q\cong\Q^{|Z|}$. In fact, with a bit more effort one can show that then
\[
K_0(A)\cong K_0(A_c)\cong\Z^{|Z|}.
\]

Without any assumptions on $Z$ and $Y_\infty$, the exact sequence~\eqref{eq:function-modules} gives rise to the exact sequence
\[
K^0(Z)\to K^0(Y)/(1-t)K^0(Y)\to K^0(Y_\infty)/(1-t)K^0(Y_\infty)\to0.
\]
From \eqref{eq:6-term1} we also have $K_0(A)\cong K^0(Y)/(1-t)K^0(Y)$. (This can also be deduced from the usual Pimsner--Voiculescu sequence for $C_0(X)\rtimes\Z$ by observing that $C(Y;\Z)\cap (1-t)C_0(X;\Z)=(1-t)C(Y;\Z)$.)
We also have $[\un_Z]=(1-t)[\un_Y]$ in $K^0(Y)$ and $K^0(Y_\infty)/(1-t)K^0(Y_\infty)\cong K_0(C(Y_\infty)\rtimes_{\alpha_{T_\infty}}\Z)$. Therefore we get the exact sequence
\begin{equation}\label{eq:exact-cross}
K^0(Z)/\Z[\un_Z]\to K_0(A)\to K_0(C(Y_\infty)\rtimes_{\alpha_{T_\infty}}\Z)\to0.
\end{equation}

Using this let us show that when $Z$ consists of one point and $\cup_{n\ge0}T^nZ$ is dense in $Y$, but there are no restrictions on $Y_\infty$, the K-theory of our one-point crystal does not say anything interesting about the K-theory of $A$. For this, take an arbitrary homeomorphism $S$ of a totally disconnected separable compact space $W$ having a dense forward orbit $\{S^nx_0\}_{n\ge0}$. Define sets
\[
X=\Z\sqcup W,\qquad Y=\Z_+\sqcup W.
\]
Let $T\colon X\to X$ be the bijection such that $T(n)=n+1$ and $T|_W=S$. Consider a metric $d$ on $W$ defining the topology on $W$. To extend this metric to $X$, consider the function $f$ on $\Z$ defined by
\[
f(n)=
\begin{dcases*}
-n+1,& if $n\le0$\\
\frac{1}{n},& if $n\ge1$,
\end{dcases*}
\]
and then put
\begin{align*}
d(m,n)&=|f(m)-f(n)|+d(T^mx_0,T^nx_0),\\
d(n,x)&=f(n)+d(T^nx_0,x)\quad\text{for}\quad x\in W.
\end{align*}
Then $Y$ is a compact open subset of $X$, $T$ is a homeomorphism, the relative topology on $W$ is the original one, $d(T^n(0),T^nx_0)\to0$ as $n\to+\infty$ and $T^n(0)\to\infty$ as $n\to-\infty$. It is not difficult to see that these properties uniquely characterize the metrizable topology on $X$.

By construction we have $Z=\{0\}$ and $(Y_\infty,T_\infty)=(W,S)$. The density of the forward orbit of~$x_0$ in~$W$ and the property $d(T^n(0),T^nx_0)\to0$ as $n\to+\infty$ imply that $\cup_{n\ge0}T^nZ$ is dense in $Y$. Note that the Fock space in this case can be identified with $\ell^2(\Z_+)$ and $A$ can be identified with the C$^*$-subalgebra of $B(\ell^2(\Z_+))$ generated by the shift operator and by the operators of multiplication by $f|_{\Z_+}$ for $f\in C(Y)$. From~\eqref{eq:exact-cross} we get
\[
K_0(A)\cong K_0(C(W)\rtimes_{\alpha_S}\Z)\qquad\text{and}\qquad K_0(A_c)\cong K^0(Z)\cong\Z.
\]

In this construction we can in particular start with any minimal homeomorphism $S$ of a Cantor set and take any point $x_0$. As a concrete example consider an interval exchange transformation~$S$ defined by a permutation $\sigma\in S_n$ ($n\ge2$) and a probability distribution $(\lambda_1,\dots,\lambda_n)$. It is known that if $\sigma$ is irreducible, then $S$ is minimal for generic $(\lambda_1,\dots,\lambda_n)$. Assuming that $S$ is indeed minimal, by~\cite{MR978619}*{Theorem~2.1} we then get
\[
K_0(C(W)\rtimes_{\alpha_S}\Z)\cong \Z^n.
\]
As a side note, although $K^0(W)\cong C(W;\Z)$ cannot be finitely generated as a $\Z[t]$-module for infinite~$W$ by the earlier discussion, in the present case it is finitely generated as a $\Z[t,t^{-1}]$-module by the proof of~\cite{MR978619}*{Lemma~2.3}.
\end{example}

\bigskip

\section{Crystals and K-theory of inverse semigroups}\label{sec:inverse}

\subsection{Scales on inverse semigroups}

Let $I$ be an inverse semigroup. Denote by $I^\times\subset I$ the subset of nonzero elements.
By a \emph{scale} on~$I$ we mean a map
$N\colon I^\times\to\R^\times_+$ such that
\[
N(gh)=N(g)N(h)\quad\text{whenever}\quad gh\ne0.
\]

\begin{example}\label{ex:scale}
Assume $S$ is a left cancellative monoid and $N\colon S\to\R^\times_+$ is a homomorphism. Consider the (left) inverse hull $I_\ell(S)$ of $S$, that is, the inverse semigroup of partial bijections on $S$ generated by the left translations $S\to sS$, $t\mapsto st$ ($s\in S$). We view $S$ as a subset of $I_\ell(S)$ and denote by $s^{-1}\in I_\ell(S)$ the inverse of $s$ in $I_\ell(S)$, that is, the partial bijection $sS\to S$, $st\mapsto t$.

We claim that~$N$ extends uniquely to a scale on $I_\ell(S)$. For this it suffices to show that if $s_n^{-1}t_n\dots s_1^{-1}t_1=\id_X$ in $I_\ell(S)$ for some $s_i,t_i\in S$ and $X\subset S$, $X\ne\emptyset$, then
\[
N(s_n)^{-1}N(t_n)\dots N(s_1)^{-1}N(t_1)=1.
\]
Take any $v_0\in X$. Define by induction elements $v_1,\dots,v_n\in S$ by letting $v_i=s_i^{-1}t_i v_{i-1}$, so that $v_n=v_0$. As $s_iv_i=t_i v_{i-1}$, we have $N(v_i)=N(s_i)^{-1}N(t_i)N(v_{i-1})$ for all $i=1,\dots,n$. It follows that $N(v_0)=N(v_n)=N(s_n)^{-1}N(t_n)\dots N(s_1)^{-1}N(t_1)N(v_0)$, proving our claim.\ee
\end{example}

Consider the modified Paterson groupoid $\G_0(I)$ of $I$~\cite{MR1724106}, where \emph{modified} means that we exclude the semi-character $1$ from the unit space when $0\in I$. Namely,
let $E$ be the abelian semigroup of idempotents in~$I$, $\Omega=\hat E$ the collection of nonzero semigroup homomorphisms $\chi\colon E\to\{0,1\}$, where $\{0,1\}$ is considered as a semigroup under multiplication, such that $\chi(0)=0$ in case $0\in I$. Then
\[
\G_0(I)=\Sigma/{\sim},\quad \text{where}\quad \Sigma = \{(g,\chi) \in I\times\Omega \colon \chi(g^{-1}g)=1\},
\]
and the equivalence relation $\sim$ is defined by declaring $(g_1,\chi_1)$ and $(g_2,\chi_2)$ to be equivalent if and only if
\[
 \chi_1=\chi_2 \text{ and there exists $p\in E$ such that $g_1p=g_2p$ and $\chi_1(p)=1$}.
\]
We denote by $[g,\chi]$ the class of $(g,\chi)\in\Sigma$ in $\G_0(I)$.

The product is defined by
\[
[g,\chi]\,[h,\psi] = [gh,\psi]\quad\text{if}\quad \chi=\psi(h^{-1} \stub h).
\]
The unit space $\G_0(I)^{(0)}$ of the groupoid $\G_0(I)$ can be identified with $\Omega$ via the map $\Omega\to\G_0(I)$, $\chi\mapsto[p,\chi]$, where $p\in E$ is any idempotent satisfying $\chi(p)=1$. The source and range maps are then given by
\[
s([g,\chi])=\chi,\qquad r([g,\chi])=g.\chi=\chi(g^{-1} \stub g),
\]
while the inverse is given by $[g,\chi]^{-1}=[g^{-1},g.\chi]$.

We consider the topology of pointwise convergence on $\Omega$. Equivalently, for every $p\in E^\times=E\setminus\{0\}$,
consider the set
\begin{equation}\label{eq:Omegap}
\Omega_p=\{\chi\in\Omega\colon \chi(p)=1\}.
\end{equation}
Then a basis of the topology on $\Omega$ is formed by the sets $\Omega_p\setminus(\Omega_{p_1}\cup\dots\cup\Omega_{p_n})$. The space $\Omega$ is locally compact and Hausdorff.

For $g\in I^\times$ and an open subset $U\subset\Omega_{g^{-1}g}$, define
\[
D(g,U)= \{[g,\chi]\in\G_0(I)\colon \chi \in U\}.
\]
Then the topology on $\G_0(I)$ is defined by taking as a basis the sets $D(g,U)$. This turns $\G_0(I)$ into a locally compact, but not necessarily Hausdorff, \'{e}tale groupoid.

We then have a canonical isomorphism $C^*(\G_0(I))\cong C^*_0(I)$, where the subscript $0$ indicates that if $I$ contains the zero element, then we consider only those representations of $I$ by partial isometries that map $0\in I$ into $0$.

\smallskip

Assuming that we are given a scale $N$, define a $1$-cocycle $c_N\colon\G_0(I)\to\R_d$,
\[
c_N([g,\chi])=\log N(g).
\]
Consider the corresponding dynamics $\sigma^N$ on $C^*(\G_0(I))=C^*_0(I)$ as described in Section~\ref{sec:crystal-gropd}; thus, if~$v_g$ ($g\in I^\times$) are the generators of $C^*_0(I)$, then $\sigma^N_t(v_g)=N(g)^{it}v_g$. Consider also the corresponding boundary set $Z\subset\Omega$ defined by~\eqref{eq:boundary}. Define a (possibly empty) set
\[
E_c^\times=\{p\in E^\times\colon \text{if $p=g^{-1}g$, then $N(g)\ge1$}\}.
\]

For every $p\in E^\times$, denote by $\chi_p$ the semi-character defined by
\begin{equation}\label{eq:chi_p}
\chi_p(q)=\begin{cases}
            1, & \text{if $p\le q$}, \\
            0, & \text{otherwise},
          \end{cases}
\end{equation}
where $p\le q$ means that $p=pq=qp$.

\begin{lemma}\label{lem:inverse-boundary-set}
We have
\[
Z=\Omega\setminus\bigcup_{p\in E^\times\setminus E_c^\times}\Omega_p=\overline{\{\chi_p\colon p\in E_c^\times\}}.
\]
In particular, $Z=\emptyset$ if and only if $E_c^\times=\emptyset$.
\end{lemma}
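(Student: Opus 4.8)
The plan is to first translate the geometric definition of the boundary set into a condition purely about idempotents and the scale, and then to recognize the resulting condition as membership in (the complement of) $E_c^\times$. Since $s([g,\chi])=\chi$ and $c_N([g,\chi])=\log N(g)$, the fibre $s^{-1}(\chi)$ consists of the classes $[g,\chi]$ with $\chi(g^{-1}g)=1$, so $\chi\in Z$ means precisely that $N(g)\ge1$ whenever $\chi(g^{-1}g)=1$. Thus $\chi\notin Z$ exactly when there is some $g\in I^\times$ with $\chi(g^{-1}g)=1$ and $N(g)<1$. Setting $p=g^{-1}g$ (note $p\ne0$, as $\chi(p)=1$), this says $\chi(p)=1$ for some $p\in E^\times$ admitting a representation $p=g^{-1}g$ with $N(g)<1$, i.e. for some $p\in E^\times\setminus E_c^\times$. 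This gives $\Omega\setminus Z=\bigcup_{p\in E^\times\setminus E_c^\times}\Omega_p$, which is the first equality; in particular $Z$ is closed, being the complement of a union of basic open sets.

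For the second equality I would first check the inclusion $\overline{\{\chi_p:p\in E_c^\times\}}\subseteq Z$ by showing $\chi_p\in Z$ for each $p\in E_c^\times$ and invoking the closedness of $Z$. Here the one genuine computation enters: suppose $\chi_p(g^{-1}g)=1$, i.e. $p\le g^{-1}g$. Then a short manipulation using $p=p(g^{-1}g)=(g^{-1}g)p$ and $p^{-1}=p$ gives $(gp)^{-1}(gp)=p$, whence $gp\ne0$ (otherwise this product would be $0\ne p$). Since $p\in E_c^\times$ and $p=(gp)^{-1}(gp)$, the defining property of $E_c^\times$ yields $N(gp)\ge1$, and as $N(p)=1$ for the idempotent $p$ we conclude $N(g)=N(gp)\ge1$. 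Hence $\chi_p\in Z$.

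The reverse inclusion $Z\subseteq\overline{\{\chi_p:p\in E_c^\times\}}$ rests on one clean observation, which I expect to be the conceptual heart of the argument: if $\chi\in Z$ and $\chi(p)=1$, then $p\in E_c^\times$. Indeed, any $g$ with $g^{-1}g=p$ satisfies $\chi(g^{-1}g)=1$, so $N(g)\ge1$ because $\chi\in Z$. Now take $\chi\in Z$ and a basic neighbourhood $\Omega_p\setminus(\Omega_{p_1}\cup\dots\cup\Omega_{p_n})$ containing it, so $\chi(p)=1$ and $\chi(p_i)=0$. The observation gives $p\in E_c^\times$, and $\chi_p$ lies in this neighbourhood: $\chi_p(p)=1$ since $p\le p$, while $\chi_p(p_i)=0$ because $p\le p_i$ would force $\chi(p)=\chi(p)\chi(p_i)=0$. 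Thus every neighbourhood of $\chi$ meets $\{\chi_p:p\in E_c^\times\}$, giving the inclusion and hence the second equality.

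Finally, the statement $Z=\emptyset\iff E_c^\times=\emptyset$ is immediate from $Z=\overline{\{\chi_p:p\in E_c^\times\}}$: each $\chi_p$ is a genuine (nonzero) element of $\Omega$, so the set $\{\chi_p:p\in E_c^\times\}$ is empty precisely when $E_c^\times$ is, and a closure is empty iff the set is. I do not anticipate a serious obstacle; the only step requiring care is the identity $(gp)^{-1}(gp)=p$, and the argument is otherwise a matter of matching definitions, with the symmetric observation above collapsing the density half to a triviality.
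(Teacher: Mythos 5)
Your proof is correct and follows essentially the same route as the paper: the first equality by unwinding the definitions of $Z$, $c_N$ and $E_c^\times$, the inclusion $\{\chi_p\colon p\in E_c^\times\}\subset Z$ via the computation $(gp)^{-1}(gp)=p$ and $N(gp)=N(g)$ (the paper phrases this as a contradiction, you argue directly, but it is the same calculation), and density via basic neighbourhoods $\Omega_p\setminus(\Omega_{p_1}\cup\dots\cup\Omega_{p_n})$. No gaps.
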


\bp
By definition, $\chi\notin Z$ if and only if there exists $[g,\chi]\in\G(I)$ such that $N(g)<1$. That is, if and only if there exist $p\in E^\times$ and $g\in I$ such that $g^{-1}g=p$, $N(g)<1$ and $\chi(p)=1$. In other words, if and only if there exists $p\in E^\times\setminus E_c^\times$ such that $\chi(p)=1$. This proves the first equality in the formulation.

For the second equality, take $p\in E_c^\times$. If $\chi_p\notin Z$, then $\chi_p\in\Omega_q$ for some $q\notin E_c^\times$. This means that $p\le q=g^{-1}g$ for some $g\in I$ with $N(g)<1$. But then for $h=gp$ we have $h^{-1}h=p$ and $N(h)=N(g)<1$, which contradicts the assumption $p\in E_c^\times$. Therefore $\chi_p\in Z$. It follows that
\[
\overline{\{\chi_p\colon p\in E_c^\times\}}\subset Z.
\]

In order to prove the opposite inclusion, take $\chi\in Z$ and a neighbourhood $U$ of $\chi$. We have
\[
\chi\in \Omega_p\setminus(\Omega_{p_1}\cup\dots\cup\Omega_{p_n})\subset U
\]
for some $p,p_1,\dots,p_n\in E^\times$. But then we must have $p\in E_c^\times$ by definition of $Z$ and $E_c^\times$. As $\chi_p\in U$, this completes the proof.
\ep

Next, define
\[
I_c^\times=\{g\in I^\times\colon g^{-1}g\in E_c^\times, g g^{-1}\!\in E_c^\times\}.
\]
By the definition of $E_c^\times$ it is clear that $N(g)=1$ for all $g\in I_c^\times$.

\begin{lemma}
Assuming $E_c^\times\ne\emptyset$ and adding $0$ to $I_c^\times$ if necessary, we get an inverse semigroup~$I_c$ with product
\[
g\cdot h=\begin{cases}
           gh, & \text{if $gh\in I_c^\times$}, \\
           0, & \text{otherwise}.
         \end{cases}
\]
\end{lemma}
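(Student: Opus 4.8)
The plan is to verify directly that $(I_c,\cdot)$ satisfies the inverse-semigroup axioms: that the truncated product is associative, and that every element is regular with commuting idempotents. The product visibly lands in $I_c^\times\cup\{0\}=I_c$, so the only substantive points are associativity and regularity. All of the work is carried by two structural properties of $E_c^\times$, both resting on the fact that $N\equiv1$ on idempotents (immediate from multiplicativity of $N$) and that $N(g)=1$ for every $g\in I_c^\times$, as recorded before the lemma.

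First I would establish these two properties. \textbf{Upward closure:} if $e\le f$ are idempotents with $e\in E_c^\times$, then $f\in E_c^\times$; indeed, given any $g$ with $g^{-1}g=f$, the element $ge$ satisfies $(ge)^{-1}(ge)=efe=e$ and $N(ge)=N(g)$, and $e\in E_c^\times$ forces $N(ge)\ge1$, whence $N(g)\ge1$. \textbf{Conjugation invariance:} for $g\in I_c^\times$ and an idempotent $e\le g^{-1}g$ one has $e\in E_c^\times$ if and only if $geg^{-1}\in E_c^\times$; the forward direction follows by the same restriction trick applied to $hg$ where $h^{-1}h=geg^{-1}$ (so $(hg)^{-1}(hg)=e$ and $N(hg)=N(h)$), and the reverse direction is the forward one applied to $g^{-1}\in I_c^\times$.

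The heart of the argument is associativity. Since each of $(g\cdot h)\cdot k$ and $g\cdot(h\cdot k)$ takes values in $\{0,ghk\}$, being $ghk$ when nonzero, it suffices to prove the implication
\[
ghk\in I_c^\times\ \Longrightarrow\ gh\in I_c^\times\ \text{and}\ hk\in I_c^\times ,
\]
for then both bracketings are nonzero precisely when $ghk\in I_c^\times$, and they coincide by associativity in $I$. To get $gh\in I_c^\times$, the range idempotent is handled by upward closure, since $(ghk)(ghk)^{-1}\le(gh)(gh)^{-1}$ and the former lies in $E_c^\times$. For the domain idempotent $d=(gh)^{-1}(gh)$ I would write $(ghk)^{-1}(ghk)=k^{-1}dk=k^{-1}(d\cdot kk^{-1})k$, recognize this as the conjugate by $k$ of $d\cdot kk^{-1}\le kk^{-1}$, apply conjugation invariance to conclude $d\cdot kk^{-1}\in E_c^\times$, and then use upward closure along $d\cdot kk^{-1}\le d$ to obtain $d\in E_c^\times$. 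The claim $hk\in I_c^\times$ is entirely symmetric, with the roles of domain and range interchanged and the conjugation carried out by $g$ rather than $k$.

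It then remains to check the easy axioms. Every $g\in I_c^\times$ has inverse $g^{-1}$, which lies in $I_c^\times$ because $(g^{-1})^{-1}(g^{-1})=gg^{-1}$ and $g^{-1}(g^{-1})^{-1}=g^{-1}g$ are in $E_c^\times$; since these idempotents belong to $E_c^\times\subset I_c^\times$, one computes $g\cdot g^{-1}=gg^{-1}$ and $g\cdot g^{-1}\cdot g=g$. The idempotents of $I_c$ are exactly $E_c^\times\cup\{0\}$, and for $e,f\in E_c^\times$ we have $e\cdot f=f\cdot e$ because $ef=fe$ in $I$ while the membership condition ``$ef\in I_c^\times$'' is symmetric in $e$ and $f$; hence idempotents commute, and a regular semigroup with commuting idempotents is an inverse semigroup. \textbf{The main obstacle} is exactly the domain-idempotent computation above: $E_c^\times$ is upward closed but generally \emph{not} closed under meets, so $(g^{-1}g)(hh^{-1})\in E_c^\times$ cannot be asserted directly; the resolution is to route through the hypothesis $ghk\in I_c^\times$ and combine conjugation invariance with upward closure to recover the required membership.
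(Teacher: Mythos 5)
Your proof is correct and follows essentially the same route as the paper: associativity is reduced to showing that $ghk\in I_c^\times$ forces the intermediate products into $I_c^\times$, and this rests on the upward heredity of $E_c^\times$ established via the multiplicativity of $N$. The only real difference is cosmetic: where you verify both idempotents of $gh$ (invoking your conjugation-invariance lemma for the domain one), the paper first records the one-sided characterization $I_c^\times=\{g\colon N(g)=1,\ g^{-1}g\in E_c^\times\}=\{g\colon N(g)=1,\ gg^{-1}\in E_c^\times\}$, so that checking a single idempotent of the intermediate product suffices.
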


If $E_c^\times=\emptyset$, we let $I_c=\{0\}$.

\bp
We start with two observations. First, the subset $E_c^\times\subset E$ is upward hereditary, meaning that if $0\ne p\le q$ and $p\in E_c^\times$, then $q\in E_c^\times$. This was essentially proved in Lemma~\ref{lem:inverse-boundary-set}. Second, the set~$I_c^\times$ can equivalently be described as
\begin{equation}\label{eq:Ic}
I_c^\times=\{g\in I^\times\colon N(g)=1, g^{-1}g\in E_c^\times\}=\{g\in I^\times\colon N(g)=1,  gg^{-1}\in E_c^\times\},
\end{equation}
since if $N(g)=1$ and $gg^{-1}\notin E^\times_c$, so that $gg^{-1}=h^{-1}h$ for some $h$ with $N(h)<1$, then $g^{-1}g=(hg)^{-1}hg$ and $N(hg)<1$, hence $g^{-1}g\notin E^\times_c$.

Now, the only nontrivial condition left to prove is associativity of the above product on $I_c$.
As $I$ associative, it suffices to show that if $g,h,k\in I_c^\times$ are such that $gh\in I_c^\times$ and $ghk\in I_c^\times$, then also $hk\in I_c^\times$. By the second observation, for this it is enough to show that $(hk)^{-1}hk\in E_c^\times$. But as
\[
(hk)^{-1}hk\ge (ghk)^{-1}ghk\in E_c^\times,
\]
this is true by the first observation.
\ep

\begin{proposition}\label{prop:crystal-of-inv-semi-and-grpd}
For any inverse semigroup $I$ with a scale $N$ and the corresponding boundary set $Z\subset\G_0(I)^{(0)}$, we have a canonical groupoid isomorphism $\G_0(I)|_Z\cong \G_0(I_c)$.
\end{proposition}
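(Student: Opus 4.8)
The plan is to build the isomorphism directly from the restriction of semicharacters, using Lemma~\ref{lem:inverse-boundary-set} to see that the boundary condition defining $Z$ is exactly what matches it up with the unit space of $\G_0(I_c)$. First I would identify the idempotents of $I_c$: an idempotent $p\in E^\times$ lies in $I_c^\times$ precisely when $p=p^{-1}p\in E_c^\times$, so the idempotent semilattice of $I_c$ is $E_c=E_c^\times\cup\{0\}$ with the truncated product $p\cdot q=pq$ if $pq\in E_c^\times$ and $p\cdot q=0$ otherwise, and the unit space of $\G_0(I_c)$ is $\hat{E_c}$. I would then define $\Phi\colon Z\to\hat{E_c}$ by $\Phi(\chi)=\chi|_{E_c^\times}$, extended by $0$ at $0\in E_c$. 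By Lemma~\ref{lem:inverse-boundary-set}, $\chi\in Z$ means exactly that $\chi(q)=0$ for every $q\in E^\times\setminus E_c^\times$, and this is what makes $\Phi(\chi)$ multiplicative on $E_c$: if $p,q\in E_c^\times$ with $pq\notin E_c^\times$ but $\chi(p)=\chi(q)=1$, then $\chi(pq)=1$ would force $\chi\in\Omega_{pq}$, contradicting $\chi\in Z$. The same characterization shows $\Phi(\chi)$ is nonzero, and that $\Phi$ is a bijection whose inverse sends $\psi\in\hat{E_c}$ to the extension of $\psi$ by $0$ on $E^\times\setminus E_c^\times$; checking that this extension is again a semicharacter uses the upward heredity of $E_c^\times$ established in the proof of the preceding lemma.

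Next I would promote $\Phi$ to a map of arrows. If $[g,\chi]\in\G_0(I)|_Z$, then $\chi\in Z$ and $g.\chi\in Z$, together with $\chi(g^{-1}g)=1$ and $(g.\chi)(gg^{-1})=\chi(g^{-1}g)=1$, force $g^{-1}g,gg^{-1}\in E_c^\times$, so $g\in I_c^\times$ automatically; I set $[g,\chi]\mapsto[g,\Phi(\chi)]$. I would verify compatibility with the two equivalence relations: a stabilizing idempotent $p$ with $gp=hp$ and $\chi(p)=1$ satisfies $p\in E_c^\times$, and since $\chi((gp)^{-1}gp)=1$ forces $(gp)^{-1}gp\in E_c^\times$, formula~\eqref{eq:Ic} gives $gp,hp\in I_c^\times$ with $g\cdot p=gp$, so $p$ also witnesses $[g,\Phi(\chi)]=[h,\Phi(\chi)]$ in $\G_0(I_c)$; the converse is analogous, yielding injectivity, while surjectivity follows by lifting any $[g,\psi]$ over $I_c$ to $[g,\Phi^{-1}(\psi)]$. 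Compatibility with source and inverse is immediate.

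The heart of the argument is the range map. For $q\in E_c^\times$ one has $(g.\chi)(q)=\chi(g^{-1}qg)$, while the range in $\G_0(I_c)$ evaluates $\Phi(\chi)$ at $g^{-1}\cdot q\cdot g$. I would show that $g^{-1}\cdot q\cdot g=g^{-1}qg$ whenever $g^{-1}qg\in E_c^\times$ (using~\eqref{eq:Ic} to see that the intermediate product $g^{-1}q$ stays in $I_c^\times$), and $g^{-1}\cdot q\cdot g=0$ otherwise; in the latter case $g^{-1}qg$ is either $0$ or lies in $E^\times\setminus E_c^\times$, and in both situations $\chi(g^{-1}qg)=0$ precisely because $\chi\in Z$. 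This reconciles the two sides, and it is exactly the point where the collapse of crystal products to zero must be matched against the vanishing of $\chi$ off $E_c^\times$; I expect this bookkeeping to be the main obstacle. The analogous computation gives compatibility with products, so the arrow map is a groupoid isomorphism. Finally, since both topologies are generated by the pointwise topology on characters through the basic bisections $D(g,U)$, and $\Phi$ is a homeomorphism of $Z$ onto $\hat{E_c}$ intertwining them, the induced bijection of arrows is a homeomorphism; the degenerate case $E_c^\times=\emptyset$ gives $Z=\emptyset$ and $\G_0(I_c)=\emptyset$, consistent with the statement.
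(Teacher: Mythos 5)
Your argument is correct and is essentially the paper's proof: you identify $Z$ with $\G_0(I_c)^{(0)}$ by restriction/extension-by-zero of semicharacters (using upward heredity of $E_c^\times$ and Lemma~\ref{lem:inverse-boundary-set}), and then match arrows via the characterization~\eqref{eq:Ic} of $I_c^\times$ together with the vanishing of characters in $Z$ on $E^\times\setminus E_c^\times$. The only difference is cosmetic: the paper constructs the map in the opposite direction, $\G_0(I_c)\to\G_0(I)|_Z$, and dismisses as ``easy to see'' the range/product bookkeeping that you carry out explicitly.
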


\bp
Since $E_c^\times$ is an upward hereditary subset of $E$, every semi-character $\chi$ on $E_c$, satisfying $\chi(0)=0$ if $0\in I_c$, extends to a semi-character on $E$ by letting $\chi(q)=0$ for $q\notin E_c$. This allows us to view $\G_0(I_c)^{(0)}$ as a closed subset of $\G_0(I)^{(0)}$. The second equality in Lemma~\ref{lem:inverse-boundary-set} implies that this subset is exactly the boundary set~$Z$. It is then easy to see that we have a well-defined homomorphism $\pi\colon\G_0(I_c)\to \G_0(I)|_Z$ that maps the class of a pair $(g,\chi)$ in $\G_0(I_c)$ into the class of this pair in $\G_0(I)$. We need to check that this homomorphism is bijective.

Take $[g,\chi]\in \G_0(I)|_Z$. As $\G_0(I)|_Z\subset c_N^{-1}(0)$, we have $N(g)=1$. As $\chi(g^{-1}g)=1$ and $\chi\in Z=\G_0(I_c)^{(0)}$, we then have $g\in I_c^\times$ by~\eqref{eq:Ic}. Therefore the pair $(g,\chi)$ defines an element of $\G_0(I_c)$, so $\pi$ is surjective.

For the injectivity, assume pairs $(g,\chi)$ and $(h,\chi)$ define the same element of $\G_0(I)|_Z$, that is, there exist $p\in E$ such that $\chi(p)=1$ and $gp=hp$. As $\chi\in Z=\G_0(I_c)^{(0)}$, we have $p\in E_c^\times$. Then, again by~\eqref{eq:Ic}, we have  $gp\in I^\times_c$, since $\chi(pg^{-1}gp)=\chi(p)\chi(g^{-1}g)=1$. Similarly, $hp\in I^\times_c$. It follows that $g\cdot p=gp=hp=h\cdot p$ in $I_c$. Therefore $(g,\chi)$ and $(h,\chi)$ define the same element of $\G_0(I_c)$, so~$\pi$ is injective.
\ep

By applying Proposition~\ref{prop:groupoid-crystal} we get the following corollary.

\begin{corollary}\label{cor:inverse-crystal}
The crystal of $(C^*_0(I),\sigma^N)$ is canonically isomorphic to $C^*_0(I_c)$.
\end{corollary}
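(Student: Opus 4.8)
The plan is to chain together the two preceding results via the canonical identification $C^*(\G_0(I))\cong C^*_0(I)$ recorded above.

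First I would verify that $c_N$ fits the hypotheses of Proposition~\ref{prop:groupoid-crystal}: it is a locally constant real-valued $1$-cocycle on the étale groupoid $\G_0(I)$, since it takes the constant value $\log N(g)$ on each basic open bisection $D(g,U)$, and the cocycle identity is exactly the multiplicativity $N(gh)=N(g)N(h)$ of the scale read through the formula $c_N([g,\chi])=\log N(g)$. As $\G_0(I)$ is locally compact étale (but possibly non-Hausdorff and non-second-countable), the generality of Proposition~\ref{prop:groupoid-crystal} is exactly what is needed, and it shows that the crystal of $(C^*(\G_0(I)),\sigma^{c_N})$ is canonically isomorphic to $C^*(\G_0(I)|_Z)$, where $Z$ is the boundary set described in Lemma~\ref{lem:inverse-boundary-set}.

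Next I would match up the dynamics. Under the canonical isomorphism $C^*_0(I)\cong C^*(\G_0(I))$, the generator $v_g$ corresponds to a function supported on $\{[g,\chi]\colon \chi\in\Omega_{g^{-1}g}\}$, on which $\sigma^{c_N}_t$ acts by multiplication by $e^{itc_N([g,\chi])}=e^{it\log N(g)}=N(g)^{it}$. Since this is precisely the prescription $\sigma^N_t(v_g)=N(g)^{it}v_g$, the dynamical systems $(C^*_0(I),\sigma^N)$ and $(C^*(\G_0(I)),\sigma^{c_N})$ are isomorphic, and so they have the same crystal, namely $C^*(\G_0(I)|_Z)$.

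Finally I would invoke Proposition~\ref{prop:crystal-of-inv-semi-and-grpd}, which provides a canonical groupoid isomorphism $\G_0(I)|_Z\cong \G_0(I_c)$ and hence a canonical C$^*$-isomorphism $C^*(\G_0(I)|_Z)\cong C^*(\G_0(I_c))$; composing with the canonical identification $C^*(\G_0(I_c))\cong C^*_0(I_c)$ for the inverse semigroup $I_c$ closes the chain and yields the claimed $A_c\cong C^*_0(I_c)$. Since every step merely applies a result already in hand, there is no serious obstacle here; the only points requiring care are the routine check that $c_N$ is locally constant and the bookkeeping that $\sigma^N$ and $\sigma^{c_N}$ correspond under $C^*_0(I)\cong C^*(\G_0(I))$. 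The degenerate case $E_c^\times=\emptyset$ (equivalently $Z=\emptyset$, equivalently $I_c=\{0\}$) is handled uniformly, both sides then being the zero algebra.
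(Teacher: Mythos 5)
Your proposal is correct and follows essentially the same route as the paper: the corollary is obtained by applying Proposition~\ref{prop:groupoid-crystal} to the groupoid $\G_0(I)$ with the locally constant cocycle $c_N$, and then using the groupoid isomorphism $\G_0(I)|_Z\cong\G_0(I_c)$ of Proposition~\ref{prop:crystal-of-inv-semi-and-grpd} together with the canonical identifications $C^*(\G_0(I))\cong C^*_0(I)$ and $C^*(\G_0(I_c))\cong C^*_0(I_c)$. Your extra checks (local constancy of $c_N$, matching of $\sigma^N$ with $\sigma^{c_N}$, and the degenerate case $Z=\emptyset$) are routine and consistent with what the paper leaves implicit.
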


This motivates the following definition.

\begin{definition}
Given an inverse semigroup $I$ and a scale $N$ on it, we call the inverse semigroup~$I_c$ the \emph{crystal} of $(I,N)$.
\end{definition}

\begin{example}\label{ex:LCM2}
As in Example~\ref{ex:LCM}, consider a right LCM semigroup $S$ and a semigroup homomorphism $N\colon S\to [1,+\infty)$. As was shown in Example~\ref{ex:scale}, it extends to a scale $N_I$ on the inverse hull~$I_\ell(S)$ of~$S$. Assume~$N$ satisfies condition~\eqref{eq:scale-condition}. Let us show that then the crystal of $(I_\ell(S),N_I)$ is $I_\ell(\ker N)$.

By the LCM condition, every element of $I_\ell(S)$ can be written as $s^{-1}t$ for some $s,t\in S$. It follows that the idempotents of $I_\ell(S)$ are the elements $p_{sS}=ss^{-1}$, which are in a one-to-one correspondence with the right principal ideals $sS\subset S$, and possibly $0$. By definition it is immediate that if $p_{sS}\in E^\times_c$, then $s\in\ker N$.

Conversely, take $s\in\ker N$ and assume $p_{sS}=(x^{-1}y)^{-1}(x^{-1}y)$ for some $x,y\in S$. This means that $sS=y^{-1}(xS\cap yS)$, equivalently,
\[
ysS=xS\cap yS.
\]
It follows that $ys=xt$ for some $t\in S$, hence $N(y)=N(ys)=N(xt)\ge N(x)$. This proves that $p_{sS}\in E^\times_c$. We see also that if $N(x)=N(y)$ in the above argument, then $t\in\ker N$ and we have
\[
x^{-1}y=x^{-1}(ys)(ys)^{-1}y=x^{-1}(xt)(ys)^{-1}y=t^{-1}s.
\]
Therefore $E_c^\times=\{p_{sS}:s\in\ker N\}$ and $\ker N\subset I_\ell(S)_c^\times\subset(\ker N)^{-1}(\ker N)\subset I_\ell(S)$.

So far we haven't used condition~\eqref{eq:scale-condition}. This condition guarantees that $\ker N$ is a right LCM semigroup and the embedding $\ker N\hookrightarrow I_\ell(S)$ extends to an embedding $I_\ell(\ker N)\hookrightarrow I_\ell(S)$. It follows that the crystal of $(I_\ell(S),N_I)$ is indeed $I_\ell(\ker N)$, which is consistent with the fact that the crystal of $(C^*(S),\sigma^N)$ is $C^*(\ker N)$ that we already established in Example~\ref{ex:LCM}.
\end{example}

\subsection{K-groups of inverse semigroup algebras}\label{sec:k-grps-inv-semigrps}
Given a scale $N$ on an inverse semigroup $I$, we know by Corollary~\ref{cor:inverse-crystal} that the crystal of $(C^*_0(I),\sigma^N)$ is again the C$^*$-algebra of an inverse semigroup. This allows us to apply recent results of Miller~\cite{miller-thesis} to compare the K-groups and obtain the following result.

\begin{theorem}\label{thm:k-theory-isom-inv-semigrp}
Assume $I$ is a countable inverse semigroup with a scale $N\colon I^\times\to\R^\times_+$. Consider the crystal $I_c$ of $(I,N)$ and assume that the following conditions are satisfied:
\begin{enumerate}
  \item the groupoid $\G_0(I)$ is Hausdorff; \label{it:hausdorff-property-of-universal-groupoid}
  \item the groupoids $\G_0(I)$, $\G_0(I_c)$  and the groups $\Gamma_p=\{g\in I\colon g^{-1}g=gg^{-1}=p\}$ ($p\in E^\times$) satisfy the Baum--Connes conjecture; \label{it:baum-connes-for-groupoids-and-stab-grp}
  \item every nonzero idempotent $p\in E^\times$ is equivalent to some $q\in E_c^\times$, that is, there is $g\in I$ such that $g^{-1}g=p$ and $gg^{-1}\in E_c^\times$.
  \label{it:transversality}
\end{enumerate}
Then $C^*_0(I)$ and the crystal $C^*_0(I_c)$ of $(C^*_0(I),\sigma^N)$ have isomorphic K-groups.
\end{theorem}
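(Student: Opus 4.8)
The plan is to reduce the statement to a purely groupoid-theoretic comparison and then feed both sides into Miller's K-theory machinery for \'etale groupoids of inverse semigroups. By Corollary~\ref{cor:inverse-crystal} together with Proposition~\ref{prop:crystal-of-inv-semi-and-grpd}, the crystal $C^*_0(I_c)$ is canonically $C^*(\G_0(I)|_Z)$, the groupoid C$^*$-algebra of the reduction of $\G_0(I)$ to the closed set $Z$. Thus it suffices to prove
\[
K_*(C^*(\G_0(I)))\cong K_*(C^*(\G_0(I)|_Z)).
\]
There is no evident $*$-homomorphism between these two algebras, so I would not try to produce the isomorphism by a direct map; instead the comparison should go through a common computable invariant.

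First I would invoke Miller's results~\cite{miller-thesis}: under the Hausdorffness hypothesis~(1) and the Baum--Connes hypotheses~(2), the K-groups $K_*(C^*(\G_0(I)))$ are computed by a homological invariant assembled from the group K-theories $K_*(C^*(\Gamma_p))$ of the maximal subgroups $\Gamma_p$ ($p\in E^\times$), indexed over the orbits of $\G_0(I)$ on its unit space, equivalently over the set $E^\times/{\sim}$ of equivalence classes of idempotents, where $p\sim q$ means $p=g^{-1}g$, $q=gg^{-1}$ for some $g\in I$. Baum--Connes for each $\Gamma_p$ identifies $K_*(C^*(\Gamma_p))$ with the corresponding topological input, while Baum--Connes for the groupoid makes the assembly map an isomorphism, so that the homological invariant genuinely computes the analytic K-theory. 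Applying the same discussion to $I_c$, using hypotheses~(1) and~(2) for $\G_0(I_c)\cong\G_0(I)|_Z$, computes $K_*(C^*(\G_0(I)|_Z))$ in the same terms.

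The heart of the argument is to match these two invariants using the transversality hypothesis~(3). Condition~(3) says exactly that every class in $E^\times/{\sim}$ has a representative in $E_c^\times$, so the inclusion $E_c^\times\subset E^\times$ induces a bijection $E_c^\times/{\sim}\to E^\times/{\sim}$, injectivity being immediate since the equivalence of idempotents restricts. Moreover, for $p\in E_c^\times$ the maximal subgroup $\Gamma_p$ is literally the same whether computed in $I$ or in $I_c$: if $g^{-1}g=gg^{-1}=p\in E_c^\times$ then $N(g)=1$ and $g\in I_c^\times$ by~\eqref{eq:Ic}. Thus the invariants attached to $I$ and to $I_c$ are assembled from the same index set and the same coefficient groups, and I would argue that they coincide, which combined with Miller's two isomorphisms yields $K_*(C^*_0(I))\cong K_*(C^*_0(I_c))$.

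The main obstacle I anticipate is that matching index sets and coefficient groups is not by itself enough: Miller's invariant carries a stratified structure---a partial order on $E^\times/{\sim}$ reflecting the specialization order of the orbit space, together with the connecting maps of the associated spectral sequence or iterated extension---and one must check that this entire structure is preserved under the passage from $\G_0(I)$ to its reduction $\G_0(I)|_Z$. The transversality condition~(3) is tailored to guarantee this, but the verification is delicate because $Z$ is only the \emph{closure} $\overline{\{\chi_p\colon p\in E_c^\times\}}$ (Lemma~\ref{lem:inverse-boundary-set}) and may contain non-principal characters, so one has to rule out that these extra points alter the orbit poset or the boundary maps. Confirming that they do not is the technical crux on which the proof rests.
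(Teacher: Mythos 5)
Your reduction to comparing $K_*(C^*(\G_0(I)))$ with $K_*(C^*(\G_0(I)|_Z))$ and your use of condition (3) to match idempotent classes and maximal subgroups are both on target, but the step you yourself call the ``technical crux'' is exactly where the proposal stops being a proof: you never show that the two homological invariants agree, and the route you sketch for doing so is the hard one. Note also a factual slip that feeds this difficulty: the orbits of $\G_0(I)$ on its unit space $\Omega$ are \emph{not} in bijection with $E^\times/{\sim}$ in general, since $\Omega$ contains many non-principal characters with their own orbits and isotropy; so Miller's machinery does not literally present $K_*(C^*(\G_0(I)))$ as data indexed by $E^\times/{\sim}$ via the orbit decomposition of $\G_0(I)$, and a stratum-by-stratum comparison of the spectral sequences of $\G_0(I)$ and of its reduction $\G_0(I)|_Z$ (with its extra non-principal points) is precisely the delicate bookkeeping your argument would have to carry out and does not.

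The paper's proof avoids this entirely by interposing a third object: the \emph{discrete} groupoid $\G_d(I)$ with object set $E^\times$ and with morphisms $p\to q$ the elements $g\in I^\times$ such that $g^{-1}g=p$ and $gg^{-1}=q$ (and likewise $\G_d(I_c)$ for $I_c$). Under hypotheses (1) and (2), the results of \cite{arXiv:2401.17240}*{Section 7} give isomorphisms $K_*(C^*(\G_d(I)))\cong K_*(C^*(\G_0(I)))$ and $K_*(C^*(\G_d(I_c)))\cong K_*(C^*(\G_0(I_c)))$; this is where all the homological/Baum--Connes input is concentrated, applied to each groupoid separately. The comparison between $I$ and $I_c$ is then elementary: since any $g$ with $g^{-1}g,\,gg^{-1}\in E_c^\times$ lies in $I_c^\times$ (cf.~\eqref{eq:Ic}), $\G_d(I_c)$ is the full subgroupoid of $\G_d(I)$ on the objects $E_c^\times$, and condition (3) says exactly that this subgroupoid meets every isomorphism class, so $\G_d(I_c)$ and $\G_d(I)$ are Morita equivalent and have the same K-theory---no matching of posets, differentials, or extension problems is required. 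This intermediate discrete groupoid and the Morita-equivalence reading of condition (3) are the missing ideas in your proposal. Finally, be aware that even the paper's argument as stated covers only torsion-free stabilizers $\Gamma_p$; the torsion case requires a Bredon-type homology for groupoids sketched in \cite{miller-thesis}, so an argument assembled purely from the groups $K_*(C^*(\Gamma_p))$ as in your plan would in any case not suffice there.
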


To be precise, by the Baum--Connes conjecture for a locally compact Hausdorff étale groupoid~$\HH$ we mean that the KK-morphism $P \rtimes \HH \to C^*(\HH)$ coming from a $\KK^\HH$-morphism $P \to C_0(\HH^{(0)})$ given by the categorical formulation of the Baum--Connes conjecture for groupoids~\cite{arXiv:2202.08067}, induces an isomorphism $K_*(P \rtimes\HH) \to K_*(C^*(\HH))$. See~\cite{MR2104446} for a comparison with the traditional approach to the Baum--Connes conjecture for groups. Note also that since the Baum--Connes conjecture with coefficients passes to closed subgroupoids, see~\cite{arXiv:2202.08067}*{Theorem~4.11}, instead of (\ref{it:baum-connes-for-groupoids-and-stab-grp}) we can assume that $\G_0(I)$ satisfies the Baum--Connes conjecture with coefficients.

In practice, one often proves that the morphism $P \to C_0(\HH^{(0)})$ is a $\KK^\HH$-equivalence.
This implies $K_*(C^*(\HH)) \cong K_*(C^*_r(\HH))$. For example, the class of inverse semigroups $I$ such that $\G_0(I)$ is amenable, studied in~\cite{MR3614034}, gives such $\HH=\G_0(I)$ by~\cite{MR1703305}, see also~\citelist{\cite{MR4448401}\cite{arXiv:2202.08067}}. Note that in this case the groups $\Gamma_p$ are also amenable, since they coincide with the isotropy groups $\G_0(I)^{\chi_p}_{\chi_p}$ of the semi-characters $\chi_p$ defined by~\eqref{eq:chi_p}, hence they satisfy the Baum--Connes conjecture by~\cite{MR1821144}. Similarly, as~$\cG_0(I_c)$ is a closed subgroupoid of $\G_0(I)$  by Proposition~\ref{prop:crystal-of-inv-semi-and-grpd}, amenability of $\cG_0(I)$ implies that of~$\cG_0(I_c)$ by~\cite{MR1799683}*{Proposition 5.1.1}. Therefore if $\G_0(I)$ is Hausdorff and amenable, then we only need to check condition (3) in Theorem~\ref{thm:k-theory-isom-inv-semigrp}.

\begin{proof}[Proof of Theorem~\ref{thm:k-theory-isom-inv-semigrp}]
Assume first that the stabilizer groups $\Gamma_p$ are torsion-free. Following~\cite{arXiv:2401.17240}, we approximate the groupoid $\cG_0(I)$ by a discrete groupoid $\cG_d(I)$, as follows. The object set of $\cG_d(I)$ is $E^\times$ as a set. Given two elements $p,q \in E^\times$, the morphisms from $p$ to $q$ are the elements $g \in I^\times$ such that $g^{-1} g = p$ and $gg^{-1} = q$. Then, as explained in~\cite{arXiv:2401.17240}*{Section 7}, conditions~(\ref{it:hausdorff-property-of-universal-groupoid}) and~(\ref{it:baum-connes-for-groupoids-and-stab-grp}) for~$\cG_0(I)$ imply that there is an isomorphism $K_*(C^*(\G_d(I)))\cong K_*(C^*(\G_0(I)))$. For the same reason we have $K_*(C^*(\G_d(I_c)))\cong K_*(C^*(\G_0(I_c)))$. On the other hand, condition (\ref{it:transversality}) implies that the groupoids $\G_d(I_c)$ and $\G_d(I)$ are Morita equivalent. It follows that $K_*(C^*(\G_d(I_c)))\cong K_*(C^*(\G_d(I)))$.

When the stabilizer groups $\Gamma_p$ have torsion elements, the proof goes along similar lines using an appropriate generalization of the Bredon homology to groupoids, as sketched in~\cite{miller-thesis}. Full details will appear in a forthcoming joint work of Miller, the third-named author and others.
\end{proof}

\begin{example}
As in Example~\ref{ex:LCM}, consider a right LCM semigroup $S$ and a semigroup homomorphism $N\colon S\to [1,+\infty)$. We assume that $S$ is countable and  $N$ satisfies condition~\eqref{eq:scale-condition}. By Example~\ref{ex:LCM2} we know that the crystal of $(I_\ell(S),N_I)$ is $I_\ell(\ker N)$. By the LCM condition all nonzero idempotents in $I_\ell(S)$ are equivalent, so condition (3) in Theorem~\ref{thm:k-theory-isom-inv-semigrp} is satisfied. As for the other two conditions, one can show that they are satisfied if $S$ embeds into a discrete group satisfying the Baum--Connes conjecture with coefficients, and in fact in this case we have, by a variant of \cite{MR4381183}*{Corollary~1.4}, that
\[
K_*(C^*_0(I_\ell(S)))\cong K_*(C^*(S^*))\cong K_*(C^*_0(I_\ell(\ker N))),
\]
where $S^*\subset S$ denotes the group of units.

As a concrete example, we can take the $ax+b$ semigroup $S=\OO_K\rtimes\OO^\times_K$, where $K$ is a number field of class number one and $\OO_K\subset K$ is the ring of algebraic integers, with the scale defined by $N(b,a)=|\OO_K/(a)|$. More generally, if $K$ is not of class number one, then $S=\OO_K\rtimes\OO^\times_K$ is not LCM, but
\[
C^*(\G_0(I_\ell(S)))\cong C^*_0(I_\ell(S))\cong C^*_r(S)
\]
by \cite{MR2900468}*{Lemma~2.30}, \cite{MR3200323}*{Theorem~3.2.14} and amenability of $K\rtimes K^\times$, and Theorem~\ref{thm:k-theory-isom-inv-semigrp} still applies by results of~\cite{MR3323201}*{Section~8}. The K-theory of $C^*_r(S)$ in this case has been already computed in~\cite{MR3323201}*{Theorem~8.2.1}. As was discussed in the introduction, the result is one of the motivations for the present work.

A similar analysis applies to the semigroups $S=\OO_K\rtimes\OO^\times_{K,+}$, where $K$ is a totally real field and $\OO^\times_{K,+}\subset \OO^\times_K$ is the submonoid of totally positive elements. In this case the stabilizer groups $\Gamma_p$ are torsion-free.
\end{example}

\bigskip

\begin{bibdiv}
\begin{biblist}

\bib{MR3932591}{article}{
   author={Afsar, Zahra},
   author={Brownlowe, Nathan},
   author={Larsen, Nadia S.},
   author={Stammeier, Nicolai},
   title={Equilibrium states on right LCM semigroup $C^*$-algebras},
   journal={Int. Math. Res. Not. IMRN},
   date={2019},
   number={6},
   pages={1642--1698},
   issn={1073-7928},
   review={\MR{3932591}},
   doi={10.1093/imrn/rnx162},
}

\bib{MR4150892}{article}{
   author={Afsar, Zahra},
   author={Larsen, Nadia S.},
   author={Neshveyev, Sergey},
   title={KMS states on Nica-Toeplitz $\rm C^*$-algebras},
   journal={Comm. Math. Phys.},
   volume={378},
   date={2020},
   number={3},
   pages={1875--1929},
   issn={0010-3616},
   review={\MR{4150892}},
   doi={10.1007/s00220-020-03711-6},
}

\bib{MR1799683}{book}{
      author={Anantharaman-Delaroche, C.},
      author={Renault, J.},
       title={Amenable groupoids},
      series={Monographies de L'Enseignement Math\'{e}matique [Monographs of
  L'Enseignement Math\'{e}matique]},
   publisher={L'Enseignement Math\'{e}matique, Geneva},
        date={2000},
      volume={36},
        ISBN={2-940264-01-5},
        note={With a foreword by Georges Skandalis and Appendix B by E.
  Germain},
      review={\MR{1799683}},
}

\bib{BR2}{book}{
   author={Bratteli, Ola},
   author={Robinson, Derek W.},
   title={Operator algebras and quantum statistical mechanics. 2},
   series={Texts and Monographs in Physics},
   edition={2},
   note={Equilibrium states. Models in quantum statistical mechanics},
   publisher={Springer-Verlag, Berlin},
   date={1997},
   pages={xiv+519},
   isbn={3-540-61443-5},
   review={\MR{1441540}},
   doi={10.1007/978-3-662-03444-6},
}

\bib{MR4153628}{article}{
   author={Brownlowe, Nathan},
   author={Laca, Marcelo},
   author={Robertson, Dave},
   author={Sims, Aidan},
   title={Reconstructing directed graphs from generalized gauge actions on
   their Toeplitz algebras},
   journal={Proc. Roy. Soc. Edinburgh Sect. A},
   volume={150},
   date={2020},
   number={5},
   pages={2632--2641},
   issn={0308-2105},
   review={\MR{4153628}},
   doi={10.1017/prm.2019.36},
}

\bib{MR4227582}{article}{
   author={Bruce, Chris},
   title={Phase transitions on ${\rm C}^*$-algebras from actions of
   congruence monoids on rings of algebraic integers},
   journal={Int. Math. Res. Not. IMRN},
   date={2021},
   number={5},
   pages={3653--3697},
   issn={1073-7928},
   review={\MR{4227582}},
   doi={10.1093/imrn/rnaa056},
}

\bib{arXiv:2202.08067}{misc}{
      author={B\"{o}nicke, Christian},
      author={Proietti, Valerio},
       title={Categorical approach to the {B}aum--{C}onnes conjecture for
  \'{e}tale groupoids},
         how={preprint},
        date={2022},
      eprint={\href{http://arxiv.org/abs/2202.08067}{\texttt{arXiv:2202.08067
  [math.KT]}}},
}

\bib{CN1}{article}{
   author={Christensen, Johannes},
   author={Neshveyev, Sergey},
   title={(Non)exotic completions of the group algebras of isotropy groups},
   journal={Int. Math. Res. Not. IMRN},
   date={2022},
   number={19},
   pages={15155--15186},
   issn={1073-7928},
   review={\MR{4490951}},
   doi={10.1093/imrn/rnab127},
}

\bib{CN2}{article}{
   author={Christensen, Johannes},
   author={Neshveyev, Sergey},
   title={Isotropy fibers of ideals in groupoid $\rm C^*$-algebras},
   journal={Adv. Math.},
   volume={447},
   date={2024},
   pages={Paper No. 109696, 32},
   issn={0001-8708},
   review={\MR{4742724}},
   doi={10.1016/j.aim.2024.109696},
}

\bib{MR2349719}{article}{
   author={Connes, Alain},
   author={Consani, Caterina},
   author={Marcolli, Matilde},
   title={Noncommutative geometry and motives: the thermodynamics of
   endomotives},
   journal={Adv. Math.},
   volume={214},
   date={2007},
   number={2},
   pages={761--831},
   issn={0001-8708},
   review={\MR{2349719}},
   doi={10.1016/j.aim.2007.03.006},
}

\bib{MR3037019}{article}{
   author={Cuntz, Joachim},
   author={Deninger, Christopher},
   author={Laca, Marcelo},
   title={$C^*$-algebras of Toeplitz type associated with algebraic number
   fields},
   journal={Math. Ann.},
   volume={355},
   date={2013},
   number={4},
   pages={1383--1423},
   issn={0025-5831},
   review={\MR{3037019}},
   doi={10.1007/s00208-012-0826-9},
}

\bib{MR3094498}{article}{
   author={Cuntz, Joachim},
   author={Echterhoff, Siegfried},
   author={Li, Xin},
   title={On the $K$-theory of crossed products by automorphic semigroup
   actions},
   journal={Q. J. Math.},
   volume={64},
   date={2013},
   number={3},
   pages={747--784},
   issn={0033-5606},
   review={\MR{3094498}},
   doi={10.1093/qmath/hat021},
}

\bib{MR3323201}{article}{
   author={Cuntz, Joachim},
   author={Echterhoff, Siegfried},
   author={Li, Xin},
   title={On the K-theory of the C*-algebra generated by the left regular
   representation of an Ore semigroup},
   journal={J. Eur. Math. Soc. (JEMS)},
   volume={17},
   date={2015},
   number={3},
   pages={645--687},
   issn={1435-9855},
   review={\MR{3323201}},
   doi={10.4171/JEMS/513},
}

\bib{MR1276163}{article}{
   author={Exel, Ruy},
   title={Circle actions on $C^*$-algebras, partial automorphisms, and a
   generalized Pimsner-Voiculescu exact sequence},
   journal={J. Funct. Anal.},
   volume={122},
   date={1994},
   number={2},
   pages={361--401},
   issn={0022-1236},
   review={\MR{1276163}},
   doi={10.1006/jfan.1994.1073},
}

\bib{MR1953065}{article}{
   author={Exel, Ruy},
   author={Laca, Marcelo},
   title={Partial dynamical systems and the KMS condition},
   journal={Comm. Math. Phys.},
   volume={232},
   date={2003},
   number={2},
   pages={223--277},
   issn={0010-3616},
   review={\MR{1953065}},
   doi={10.1007/s00220-002-0713-4},
}

\bib{MR3614034}{article}{
      author={Exel, Ruy},
      author={Starling, Charles},
       title={Amenable actions of inverse semigroups},
        date={2017},
        ISSN={0143-3857,1469-4417},
     journal={Ergodic Theory Dynam. Systems},
      volume={37},
      number={2},
       pages={481\ndash 489},
         url={https://doi.org/10.1017/etds.2015.60},
         doi={10.1017/etds.2015.60},
      review={\MR{3614034}},
}

\bib{arXiv:2203.14665}{article}{
   author={Giri, Manabendra},
   author={Pal, Arup Kumar},
   title={Quantized function algebras at $q=0$: Type $A_n$ case},
   journal={Proc. Indian Acad. Sci. Math. Sci.},
   volume={134},
   date={2024},
   number={2},
   pages={Paper No. 30},
   issn={0253-4142},
   review={\MR{4810499}},
   doi={10.1007/s12044-024-00792-4},
}

\bib{Gis}{misc}{
      author={Giselsson, Olof},
       title={Quantum $SU(3)$ as the C$^*$-algebra of a $2$-graph},
         how={preprint},
        date={2023},
      eprint={\href{http://arxiv.org/abs/2307.12878}{\texttt{arXiv:2307.12878 [math.OA]}}},
}

\bib{MR1821144}{article}{
      author={Higson, Nigel},
      author={Kasparov, Gennadi},
       title={{$E$}-theory and {$KK$}-theory for groups which act properly and
  isometrically on {H}ilbert space},
        date={2001},
        ISSN={0020-9910},
     journal={Invent. Math.},
      volume={144},
      number={1},
       pages={23\ndash 74},
         url={http://dx.doi.org/10.1007/s002220000118},
         doi={10.1007/s002220000118},
      review={\MR{1821144 (2002k:19005)}},
}

\bib{MR1942860}{article}{
   author={Hong, Jeong Hee},
   author={Szyma\'{n}ski, Wojciech},
   title={Quantum spheres and projective spaces as graph algebras},
   journal={Comm. Math. Phys.},
   volume={232},
   date={2002},
   number={1},
   pages={157--188},
   issn={0010-3616},
   review={\MR{1942860}},
   doi={10.1007/s00220-002-0732-1},
}

\bib{MR1359923}{article}{
   author={Jorgensen, P. E. T.},
   author={Schmitt, L. M.},
   author={Werner, R. F.},
   title={Positive representations of general commutation relations allowing
   Wick ordering},
   journal={J. Funct. Anal.},
   volume={134},
   date={1995},
   number={1},
   pages={33--99},
   issn={0022-1236},
   review={\MR{1359923}},
   doi={10.1006/jfan.1995.1139},
}

\bib{MR1090425}{article}{
   author={Kashiwara, Masaki},
   title={Crystalizing the $q$-analogue of universal enveloping algebras},
   journal={Comm. Math. Phys.},
   volume={133},
   date={1990},
   number={2},
   pages={249--260},
   issn={0010-3616},
   review={\MR{1090425}},
}

\bib{MR3892090}{article}{
   author={Laca, Marcelo},
   author={Larsen, Nadia S.},
   author={Neshveyev, Sergey},
   title={Ground states of groupoid $C^*$-algebras, phase transitions and
   arithmetic subalgebras for Hecke algebras},
   journal={J. Geom. Phys.},
   volume={136},
   date={2019},
   pages={268--283},
   issn={0393-0440},
   review={\MR{3892090}},
   doi={10.1016/j.geomphys.2018.09.018},
}

\bib{MR2056837}{article}{
   author={Laca, Marcelo},
   author={Neshveyev, Sergey},
   title={KMS states of quasi-free dynamics on Pimsner algebras},
   journal={J. Funct. Anal.},
   volume={211},
   date={2004},
   number={2},
   pages={457--482},
   issn={0022-1236},
   review={\MR{2056837}},
   doi={10.1016/j.jfa.2003.08.008},
}

\bib{MR2785897}{article}{
   author={Laca, Marcelo},
   author={Neshveyev, Sergey},
   title={Type $\rm III_1$ equilibrium states of the Toeplitz algebra of the
   affine semigroup over the natural numbers},
   journal={J. Funct. Anal.},
   volume={261},
   date={2011},
   number={1},
   pages={169--187},
   issn={0022-1236},
   review={\MR{2785897}},
   doi={10.1016/j.jfa.2011.03.009},
}

\bib{MR2671177}{article}{
   author={Laca, Marcelo},
   author={Raeburn, Iain},
   title={Phase transition on the Toeplitz algebra of the affine semigroup
   over the natural numbers},
   journal={Adv. Math.},
   volume={225},
   date={2010},
   number={2},
   pages={643--688},
   issn={0001-8708},
   review={\MR{2671177}},
   doi={10.1016/j.aim.2010.03.007},
}

\bib{MR2900468}{article}{
   author={Li, Xin},
   title={Semigroup ${\rm C}^*$-algebras and amenability of semigroups},
   journal={J. Funct. Anal.},
   volume={262},
   date={2012},
   number={10},
   pages={4302--4340},
   issn={0022-1236},
   review={\MR{2900468}},
   doi={10.1016/j.jfa.2012.02.020},
}

\bib{MR4381183}{article}{
   author={Li, Xin},
   title={K-theory for semigroup $\rm C^*$-algebras and partial crossed
   products},
   journal={Comm. Math. Phys.},
   volume={390},
   date={2022},
   number={1},
   pages={1--32},
   issn={0010-3616},
   review={\MR{4381183}},
   doi={10.1007/s00220-021-04194-9},
}

\bib{MR1035415}{article}{
   author={Lusztig, G.},
   title={Canonical bases arising from quantized enveloping algebras},
   journal={J. Amer. Math. Soc.},
   volume={3},
   date={1990},
   number={2},
   pages={447--498},
   issn={0894-0347},
   review={\MR{1035415}},
   doi={10.2307/1990961},
}

\bib{MR4635346}{article}{
   author={Matassa, Marco},
   author={Yuncken, Robert},
   title={Crystal limits of compact semisimple quantum groups as higher-rank
   graph algebras},
   journal={J. Reine Angew. Math.},
   volume={802},
   date={2023},
   pages={173--221},
   issn={0075-4102},
   review={\MR{4635346}},
   doi={10.1515/crelle-2023-0047},
}

\bib{MR2563811}{article}{
      author={Meyer, Ralf},
       title={Homological algebra in bivariant {$K$}-theory and other
  triangulated categories. {II}},
        date={2008},
        ISSN={1875-158X},
     journal={Tbil. Math. J.},
      volume={1},
       pages={165\ndash 210},
      eprint={\href{http://arxiv.org/abs/0801.1344}{\texttt{arXiv:0801.1344
  [math.KT]}}},
      review={\MR{2563811}},
}

\bib{MR2104446}{article}{
      author={Meyer, Ralf},
      author={Nest, Ryszard},
       title={The {B}aum-{C}onnes conjecture via localization of categories},
        date={2004},
        ISSN={0377-9017},
     journal={Lett. Math. Phys.},
      volume={69},
       pages={237\ndash 263},
         url={http://dx.doi.org/10.1007/s11005-004-1831-z},
         doi={10.1007/s11005-004-1831-z},
      review={\MR{2104446}},
}

\bib{miller-thesis}{thesis}{
      author={Miller, Alistair},
       title={K-theory for {\'e}tale groupoid {C$^*$}-algebras via groupoid
  correspondences and spectral sequences},
        type={Ph.D. Thesis},
      school={Queen Mary University of London},
        date={2022},
}

\bib{arXiv:2303.02089}{misc}{
      author={Miller, Alistair},
       title={Functors between {K}asparov categories from {\'e}tale groupoid
  correspondences},
        date={2023},
      eprint={\href{http://arxiv.org/abs/2303.02089}{\texttt{arXiv:2303.02089
  [math.OA]}}},
}

\bib{arXiv:2304.13473}{misc}{
      author={Miller, Alistair},
       title={Ample groupoid homology and {\'e}tale correspondences},
         how={preprint},
        date={2023},
      eprint={\href{http://arxiv.org/abs/2304.13473}{\texttt{arXiv:2304.13473
  [math.KT]}}},
        note={to appear in J. Noncommut. Geom.},
}

\bib{arXiv:2401.17240}{misc}{
      author={Miller, Alistair},
       title={Isomorphisms in {K}-theory from isomorphisms in groupoid homology
  theories},
         how={preprint},
        date={2024},
  eprint={\href{http://arxiv.org/abs/2401.17240}{\texttt{arXiv:2401.17240}}},
}

\bib{N}{article}{
   author={Neshveyev, Sergey},
   title={KMS states on the $C^\ast$-algebras of non-principal groupoids},
   journal={J. Operator Theory},
   volume={70},
   date={2013},
   number={2},
   pages={513--530},
   issn={0379-4024},
   review={\MR{3138368}},
   doi={10.7900/jot.2011sep20.1915},
}

\bib{MR4563262}{article}{
   author={Neshveyev, Sergey},
   author={Schwartz, Gaute},
   title={Non-Hausdorff \'{e}tale groupoids and ${\rm C}^*$-algebras of left
   cancellative monoids},
   journal={M\"{u}nster J. Math.},
   volume={16},
   date={2023},
   number={1},
   pages={147--175},
   issn={1867-5778},
   review={\MR{4563262}},
}

\bib{NS}{article}{
   author={Neshveyev, Sergey},
   author={Stammeier, Nicolai},
   title={The groupoid approach to equilibrium states on right LCM semigroup
   $C^*$-algebras},
   journal={J. Lond. Math. Soc. (2)},
   volume={105},
   date={2022},
   number={1},
   pages={220--250},
   issn={0024-6107},
   review={\MR{4411323}},
   doi={10.1112/jlms.12510},
}

\bib{MR3200323}{article}{
   author={Norling, Magnus Dahler},
   title={Inverse semigroup $C^*$-algebras associated with left cancellative
   semigroups},
   journal={Proc. Edinb. Math. Soc. (2)},
   volume={57},
   date={2014},
   number={2},
   pages={533--564},
   issn={0013-0915},
   review={\MR{3200323}},
   doi={10.1017/S0013091513000540},
}

\bib{MR1724106}{book}{
   author={Paterson, Alan L. T.},
   title={Groupoids, inverse semigroups, and their operator algebras},
   series={Progress in Mathematics},
   volume={170},
   publisher={Birkh\"{a}user Boston, Inc., Boston, MA},
   date={1999},
   pages={xvi+274},
   isbn={0-8176-4051-7},
   review={\MR{1724106}},
   doi={10.1007/978-1-4612-1774-9},
}

\bib{MR1426840}{article}{
   author={Pimsner, Michael V.},
   title={A class of $C^*$-algebras generalizing both Cuntz-Krieger algebras
   and crossed products by ${\bf Z}$},
   conference={
      title={Free probability theory},
      address={Waterloo, ON},
      date={1995},
   },
   book={
      series={Fields Inst. Commun.},
      volume={12},
      publisher={Amer. Math. Soc., Providence, RI},
   },
   date={1997},
   pages={189--212},
   review={\MR{1426840}},
}

\bib{MR1785460}{article}{
   author={Pinzari, C.},
   author={Watatani, Y.},
   author={Yonetani, K.},
   title={KMS states, entropy and the variational principle in full
   $C^*$-dynamical systems},
   journal={Comm. Math. Phys.},
   volume={213},
   date={2000},
   number={2},
   pages={331--379},
   issn={0010-3616},
   review={\MR{1785460}},
   doi={10.1007/s002200000244},
}

\bib{MR4448401}{article}{
      author={Proietti, Valerio},
      author={Yamashita, Makoto},
       title={Homology and {$K$}-theory of dynamical systems {I}.
  {T}orsion-free ample groupoids},
        date={2022},
        ISSN={0143-3857,1469-4417},
     journal={Ergodic Theory Dynam. Systems},
      volume={42},
      number={8},
       pages={2630\ndash 2660},
         url={https://doi.org/10.1017/etds.2021.50},
         doi={10.1017/etds.2021.50},
      review={\MR{4448401}},
}

\bib{MR978619}{article}{
   author={Putnam, Ian F.},
   title={The $C^*$-algebras associated with minimal homeomorphisms of the
   Cantor set},
   journal={Pacific J. Math.},
   volume={136},
   date={1989},
   number={2},
   pages={329--353},
   issn={0030-8730},
   review={\MR{978619}},
}

\bib{MR584266}{book}{
   author={Renault, Jean},
   title={A groupoid approach to $C^{\ast} $-algebras},
   series={Lecture Notes in Mathematics},
   volume={793},
   publisher={Springer, Berlin},
   date={1980},
   pages={ii+160},
   isbn={3-540-09977-8},
   review={\MR{584266}},
}

\bib{MR0057477}{article}{
   author={Rosenberg, Alex},
   title={The number of irreducible representations of simple rings with no
   minimal ideals},
   journal={Amer. J. Math.},
   volume={75},
   date={1953},
   pages={523--530},
   issn={0002-9327},
   review={\MR{0057477}},
   doi={10.2307/2372501},
}

\bib{MR2565546}{article}{
      author={Steinberg, Benjamin},
       title={A groupoid approach to discrete inverse semigroup algebras},
        date={2010},
        ISSN={0001-8708,1090-2082},
     journal={Adv. Math.},
      volume={223},
      number={2},
       pages={689\ndash 727},
         url={https://doi.org/10.1016/j.aim.2009.09.001},
         doi={10.1016/j.aim.2009.09.001},
      review={\MR{2565546}},
}

\bib{MR4673062}{article}{
   author={Thomsen, Klaus},
   title={Equilibria when the temperature goes to zero},
   journal={J. Math. Phys.},
   volume={64},
   date={2023},
   number={12},
   pages={Paper No. 121701, 26},
   issn={0022-2488},
   review={\MR{4673062}},
   doi={10.1063/5.0174058},
}

\bib{MR1703305}{article}{
      author={Tu, Jean-Louis},
       title={La conjecture de {B}aum-{C}onnes pour les feuilletages
  moyennables},
        date={1999},
        ISSN={0920-3036},
     journal={$K$-Theory},
      volume={17},
      number={3},
       pages={215\ndash 264},
         url={http://dx.doi.org/10.1023/A:1007744304422},
         doi={10.1023/A:1007744304422},
      review={\MR{1703305 (2000g:19004)}},
}

\bib{MR890482}{article}{
      author={Woronowicz, S.~L.},
       title={Twisted {${\rm SU}(2)$} group. {A}n example of a noncommutative
  differential calculus},
        date={1987},
        ISSN={0034-5318},
     journal={Publ. Res. Inst. Math. Sci.},
      volume={23},
      number={1},
       pages={117\ndash 181},
         url={http://dx.doi.org/10.2977/prims/1195176848},
         doi={10.2977/prims/1195176848},
      review={\MR{890482 (88h:46130)}},
}

\end{biblist}
\end{bibdiv}

\bigskip

\end{document}